\newcommand{\C}{{\mathbb C}}
\newcommand{\R}{{\mathbb R}}
\newcommand{\N}{{\mathbb N}}
\newcommand{\Q}{{\mathbb Q}}
\newcommand{\proj}{{\mathbb{P}}}
\newcommand{\Cscr}{\mathscr{C}}
\newcommand{\Cr}{\mathcal{C}}
\newcommand{\Cc}{{\mathcal C}}
\newcommand{\Qr}{\mathcal{Q}}
\newcommand{\D}{\text{D}}
\newcommand{\dist}{\text{dist}}
\newcommand{\Exp}{Exp}
\newcommand{\B}{\mathbb{B}}
\newcommand{\A}{\mathscr{A}}
\newcommand{\Fr}{\mathscr{F}}
\newcommand{\m}{\mathrm{m}}
\newcommand{\hm}{\hat{\m}}
\newcommand{\Fc}{\mathcal{F}}
\newcommand{\Rr}{\mathscr{R}}
\newcommand{\Br}{\mathscr{B}}
\newcommand{\Dr}{\mathscr{D}}
\newcommand{\Wr}{\mathscr{W}}
\newcommand{\hx}{{\hat{x}}}
\newcommand{\hy}{{\hat{y}}}
\newcommand{\hp}{{\hat{p}}}
\newcommand{\hA}{{\hat{A}}}
\newcommand{\hP}{{\hat{P}}}
\newcommand{\hL}{{\hat{L}}}
\newcommand{\hX}{{\hat{X}}}
\newcommand{\hq}{{\hat{q}}}
\newcommand{\hf}{{\hat{f}}}
\newcommand{\hW}{{\hat{W}}}
\newcommand{\hR}{{\hat{\Rr}}}
\newcommand{\hY}{{\hat{Y}}}
\newcommand{\hnu}{{\hat{\nu}}}
\newcommand{\hxi}{\hat{\xi}}
\newcommand{\intgeom}{\dot{\wedge}}
\newcommand{\diam}{\mathrm{diam}}
\newcommand{\supp}{\mathrm{supp}}
\newcommand{\card}{\mathrm{card}}
\newcommand{\id}{\mathrm{id}}
\newcommand{\dd}{\mathrm{dd}}
\newcommand{\ds}{\mathrm{d}}
\newcommand{\genus}{\mathrm{genus}}
\newcommand{\hol}{\mathrm{hol}}
\newcommand{\Jac}{\mathrm{Jac}}
\theoremstyle{plain}
\newtheorem{theoreme}{Theorem}[section]
\newtheorem{lemma}[theoreme]{Lemma}
\newtheorem{lemme}[theoreme]{Lemma}
\newtheorem{prop}[theoreme]{Proposition}
\newtheorem{coro}[theoreme]{Corollary}
\newtheorem{defi}[theoreme]{Definition}
\newtheorem{defif}[theoreme]{Definition}
\newtheorem{prop/defi}[theoreme]{Proposition/Definition}
\theoremstyle{definition}
\newtheorem*{question}{Question}
\theoremstyle{remark}
\newtheorem*{rmq}{Remark}
\newtheorem*{rmk}{Remark}
\author[Sandrine Daurat]{Sandrine Daurat }
\begin{document}
\title[Hyperbolic saddle measures and laminarity]{Hyperbolic saddle measures and laminarity for holomorphic endomorphisms of  $\proj^2\C$}

\maketitle
\begin{center}
\today
\end{center}

\renewcommand{\abstractname}{Abstract}
\begin{abstract}
We study the laminarity of the Green current of endomorphisms of $\proj^2(\C)$ near hyperbolic measures of saddle type.
When these measures are supported by attracting sets, we prove that the Green current is laminar in the basin of attraction and we obtain new ergodic properties.
This generalizes some results of Bedford and Jonsson on regular polynomial mappings in
 $\C^2$.
\end{abstract}

\section{Introduction}\label{section intro laminarite}

This article concerns the dynamics of a holomorphic endomorphism $f$ of $\proj^2(\C)$ (hereafter denoted $\proj^2$).
Recall that the Julia set $J_1$ is the complementary of the regular part of the dynamics and $J_1=\supp(T)$, where $T$ is the Green current of $f$
The most chaotic part of the dynamics is a subset $J_2$ of $J_1$ that corresponds to the support of the equilibrium measure $\mu_{eq}=T\wedge T$.
See \cite{din:sib:survey}, and the references therein, for more results about the dynamics on $J_2$ and the proprieties of $\mu_{eq}$.
The natural measure to consider on $J_1$ is the trace measure $\sigma_T=T\wedge \omega_{FS}$ of the Green current $T$ which is invariant.
 We address the question of understanding the behavior  of $\sigma_T$-almost every point when  $J_2=\supp(\mu_{eq})\neq J_1$.

\bigskip

We are mainly interested in the case where $f$  admits a trapping region   $ U $, i.e. an open set such that $f(U)\Subset U$.
The decreasing limit $\A=\bigcap_{n\in \N}f^n(U)$ is called an attracting set.
 Notice that, since $ U\neq \proj^2$ is a trapping region, $\supp(\mu_{eq})\cap U=\emptyset$. 
 If $U$ is Kobayashi hyperbolic then $\A$ is a finite union of attracting periodic orbits, and the dynamics in the basin of $\A$ is well understood. 
So assume that
  $ U\neq \proj^2 $ and  $U$ is not Kobayashi hyperbolic.
In this case, $U$ contains a curve $\ell$, see Proposition \ref{prop A non trivial}.

Bedford-Jonsson \cite{bed:jon} considered the case of endomorphisms
of the form $$f_0:[x:y:z]\mapsto [P(x,y,z):Q(x,y,z):z^d],$$
near the line at infinity $L_\infty=\{z=0\}$, which is  an attracting set.
 From the dynamics in $\proj^1$, it is known that there exists a unique  measure $\mu_\infty$ 
 of maximal entropy on $L_\infty$, and $\mu_\infty$ represents the equidistribution of saddle points in $L_\infty$. Moreover, the disintegration of the measure $\mu_\infty$ on the unstable manifold $L_\infty$ is induced by $T$, i.e. $\mu_\infty=T\wedge [L_\infty]$.
 
In \cite{bed:jon}, Bedford-Jonsson prove that the Green current of $f_0$ is laminar subordinate to the stable manifolds of $\mu_\infty$ in the basin of $L_\infty$. Thereby, they also obtain that $\mu_\infty $ represent the equidistribution of $\sigma_T$-almost every points  in the basin of $L_\infty$, see Definition \ref{def laminaire}.

\bigskip
 
In general, attracting sets have a more complicated structure. In particular, they are generically non-algebraic, see \cite{jon:wei,moi:taf} and the references therein. 
However, we are going to see that the dynamics in the basin of attraction is similar to the case describe above. For technical reasons, we assume :

\begin{enumerate}[label={$(Tub)$}]
\item \label{cond U tub} 
$U$ is a tubular neighbourhood of a curve $\ell$
\end{enumerate}
In particular, $U$  is a euclidean retract\footnote{See \cite[Proposition/Definition IV 8.5]{dol}}  of  $\ell$.
We also assume that $f$ satisfies one of the following : 
\begin{enumerate}[label={$(Sd_t)$}]
\item \label{cond petit dt} 
 $\A$ is an attracting set of small topological degree,
\end{enumerate}
or
\begin{enumerate}[label={$(SJ)$}]
\item \label{cond small jacobian} 
There exists a neighbourhood $N$ of $\A$ in which  the Jacobian of $f$ is small, i.e. there exists $0<\alpha<1$ such that for all $p=[x:y:z]\in N$, $|\Jac_p(f)|<\alpha \max(|x|,|y|,|z|)^{2d-2}$.
\end{enumerate}

The condition  \ref{cond small jacobian} is typically satisfied by small perturbations of $f_0$, see also section \ref{section rmk}.
 We refer Definition \ref{def small dt} for the definition of small topological degree attracting sets and to \cite{moi} and \cite{moi:taf} examples.
Under these assumptions, we know

\begin{theoreme}[\cite{DINH,moi,moi:taf}]\label{th dinh,da-ta}
Let $f$ be an endomorphism of $\proj^2$ admitting a trapping region $U$. Assume that $f$ and $U$ satisfy the conditions  \ref{cond U tub}, and \ref{cond petit dt} or \ref{cond small jacobian}. Then

\begin{enumerate}[label={$(C_\arabic*)$}]
\item \label{ccl unicite} 
$ \text{there exists a unique invariant current } T^u\in \Cc_{(1,1)}(U),
$ where $\Cc_{(1,1)}(U)$ is the set of positive closed currents of bidegree $(1,1)$ with supports in $U$,
\item \label{ccl forte on nu} 
$\nu=T\wedge T^u \text{ is mixing, of entropy }\log(d), $
\item \label{ccl all hyp}
all measure of entropy $\log(d) $ and support on $\A$ is hyperbolic of saddle type.
\end{enumerate}
\end{theoreme}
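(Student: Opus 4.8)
The plan is to prove the three conclusions in turn, using condition~\ref{cond U tub} to control the cohomology of $U$ and treating conditions~\ref{cond petit dt} and~\ref{cond small jacobian} as two alternative sources of one and the same contraction estimate. For~\ref{ccl unicite}: since $U$ is a euclidean retract of the curve $\ell$, one has $H^{1,1}(U)\simeq H^{2}(U)\simeq H^{2}(\ell)$, so a normalised reference K\"ahler class on $U$ can be fixed and a $\ddc$-lemma run there, every positive closed $(1,1)$-current on $U$ of the normalised mass being the reference form plus $\ddc$ of a function on $U$. I would then define $T^{u}$ as a limit value of the sequence $S_{n}:=\tfrac{1}{\deg\ell}\,d^{-n}(f^{n})_{*}[\ell]$. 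Each $S_{n}$ is a globally closed positive $(1,1)$-current on $\proj^{2}$ of mass $1$ (the class of $\ell$ being a positive multiple of the hyperplane class), so no mass escapes to the limit; and since $f(U)\Subset U$ the supports $\supp S_{n}$ decrease to $\A$, so every limit value is an element of $\Cc_{(1,1)}(U)$ with $d^{-1}f_{*}T^{u}=T^{u}$. Convergence of the whole sequence, and independence of the limit from the starting current --- hence uniqueness of the invariant current --- both reduce to the claim that $d^{-1}f_{*}$ is an exponential contraction on $\ddc$-potentials on $U$: under~\ref{cond petit dt} this is because the number of $f^{n}$-preimages in $U$ of a point near $\A$ grows like $d_{t}^{\,n}$ with $d_{t}<d$, so $d^{-n}(f^{n})_{*}$ applied to a bounded potential tends uniformly to $0$; under~\ref{cond small jacobian} the Jacobian bound plays the same role by limiting the mass that the push-forwards $(f^{n})_{*}$ of a smooth form can concentrate near $\A$.

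For~\ref{ccl forte on nu}: the Green current $T$ has H\"older continuous local potentials, so $\nu=T\wedge T^{u}$ is a well-defined positive measure, carried by $\supp(T)\cap\A$ and of total mass $[T]\cdot[T^{u}]=1$. Using the projection formula and $(f^{n})^{*}T=d^{n}T$ one rewrites $T\wedge S_{n}=(f^{n})_{*}\mu_{\ell}$, where $\mu_{\ell}:=\tfrac{1}{\deg\ell}\,T\wedge[\ell]$ is a probability measure on $\ell$; passing to the limit gives $\nu=\lim_{n}(f^{n})_{*}\mu_{\ell}$, whence $f_{*}\nu=\nu$, so $\nu$ is invariant. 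Mixing of $\nu$ I would obtain from a decay-of-correlations estimate, produced by combining the two convergences $d^{-n}(f^{n})^{*}\omega_{FS}\to T$ and $S_{n}\to T^{u}$ with quantitative speed, following the scheme of Dinh--Sibony in the attracting region. For the entropy, the lower bound $h_{\nu}(f)\ge\log d$ comes from a de Th\'elin-type lower bound applied to $\nu$, realised as above as a limit of normalised push-forwards of a curve of $U$ (equivalently, from the equidistribution of the period-$n$ saddle orbits of $f$ in $U$, of which there are of order $d^{n}$), and the upper bound $h_{\nu}(f)\le\log d$ follows from the variational principle together with $h_{\mathrm{top}}(f|_{\A})\le\log d$, itself a consequence of~\ref{cond petit dt} or~\ref{cond small jacobian} via a Yomdin-type volume-growth estimate on the attracting set.

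For~\ref{ccl all hyp}: by ergodic decomposition, together with the a priori bound $h_{\mathrm{top}}(f|_{\A})\le\log d$ which forces almost every component to have entropy $\log d$, we may assume $m$ ergodic; write $\chi_{1}\ge\chi_{2}$ for its two Lyapunov exponents, each counted with multiplicity $2$ in Ruelle's inequality since $f$ is holomorphic. Because $h_{m}(f)=\log d>0$, the exponents cannot both be $\le 0$, so $\chi_{1}>0$; it remains to show $\chi_{2}<0$. I would bound $\chi_{1}+\chi_{2}=\int\log|\Jac_{p}f|\,dm(p)$ from above: under~\ref{cond small jacobian} the hypothesis bounds this integral directly, once the affine Jacobian there is rewritten in terms of $|\Jac_{p}f|$ measured with $\omega_{FS}$, while under~\ref{cond petit dt} the small topological degree forces any entropy-$\log d$ measure on $\A$ to carry all its entropy in a single expanding direction; in either case one gets $\chi_{1}+\chi_{2}<\tfrac{1}{2}\log d$. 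Then $\chi_{2}\ge 0$ would give $\log d=h_{m}(f)\le 2\chi_{1}+2\chi_{2}=2(\chi_{1}+\chi_{2})<\log d$, a contradiction; hence $\chi_{2}<0$ and $m$ is hyperbolic of saddle type.

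The crux is the contraction estimate underlying~\ref{ccl unicite}: one must control the iterated normalised push-forwards $d^{-n}(f^{n})_{*}$ on currents supported in $U$ precisely when $\A$ is genuinely non-algebraic and fractal, and everything downstream --- the mixing in~\ref{ccl forte on nu}, the Jacobian bound in~\ref{ccl all hyp} --- ultimately rests on it; conditions~\ref{cond petit dt} and~\ref{cond small jacobian} are exactly what keeps the branching of $f$ along $\A$ from overwhelming the $d^{-n}$ normalisation. The~\ref{cond petit dt} case of~\ref{ccl all hyp} and the passage from the affine Jacobian to the $\omega_{FS}$-Jacobian in the~\ref{cond small jacobian} case are the other points demanding care.
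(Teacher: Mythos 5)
First, a structural point: the paper does not prove Theorem \ref{th dinh,da-ta} at all; it is imported from \cite{DINH,moi,moi:taf} (Section \ref{section hyp th lam bassin} only recalls under which geometric hypotheses each conclusion is established there), so there is no internal proof to compare you with, and I can only measure your sketch against the arguments of the cited works.

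The serious gap is your argument for \ref{ccl unicite}. You reduce uniqueness of the invariant current to the claim that $d^{-1}f_*$ is a contraction on $\ddc$-potentials on $U$, estimating $d^{-n}(f^n)_*u$ by the number ($\sim d_t^n<d^n$) of preimages remaining in $U$. This does not go through as stated. If $S,S'\in\Cc_{(1,1)}(U)$ are two invariant currents of mass $1$, then $S-S'=\ddc u$ with $u$ a difference of quasi-psh functions defined on all of $\proj^2$; such a $u$ need not be bounded (an invariant current could a priori charge a curve in $U$), and, more importantly, $(f^n)_*u(y)=\sum_{x\in f^{-n}(y)}u(x)$ runs over all $d^{2n}$ preimages of $y$, not only those lying in $U$, so the count furnished by \ref{cond petit dt} gives no control on $d^{-n}(f^n)_*u$; cutting $u$ off to $U$ destroys the identity $(f^n)_*\ddc u=\ddc\bigl((f^n)_*u\bigr)$. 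The actual proofs use a genuinely different mechanism: Dinh's structural discs in the space of currents \cite{DINH}, extended in \cite{moi:taf} to non-algebraic attracting sets (the present paper itself reproduces exactly that construction in Section \ref{section hyp th lam bassin} for the conic example), and under \ref{cond small jacobian} your one-line claim that the Jacobian bound ``limits the mass concentration'' is not yet an argument for uniqueness. A second, smaller but real, gap is the case \ref{cond petit dt} of \ref{ccl all hyp}: the assertion that small topological degree forces $\chi_1+\chi_2<\frac{1}{2}\log d$ is unsupported, since nothing under \ref{cond petit dt} bounds $\int\log|\Jac f|\,\ds m$; the mechanism used in \cite{moi,moi:taf}, and recalled in Section \ref{section assumptions}, is instead a volume-growth/counting one, namely $\limsup_n\bigl(\int_U(f^n)^*(\omega^2)\bigr)^{1/n}<d$ (a generic point has few preimages in $U$), which is incompatible with an entropy-$\log d$ measure on $\A$ having $\chi_2\geq 0$; only under \ref{cond small jacobian} does one genuinely integrate a Jacobian estimate as you propose. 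Your construction of $T^u$ as a cluster value of $d^{-n}(f^n)_*[\ell]$, the Ruelle-inequality bookkeeping, and the Dinh--Sibony scheme for mixing and the entropy bounds are reasonable as a skeleton, but they all rest on the missing convergence/uniqueness statement, which is precisely the hard content of the cited theorems.
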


One of the main result of this article is to prove that $\nu$ has the same properties as the $\mu_\infty$ in \cite{bed:jon}.
 
\begin{theoreme}\label{th intro}
Let $f$ be an endomorphism of $\proj^2$  admitting a trapping region $U$. 
Assume that $f$ and $U$ satisfy  \ref{cond U tub}, and \ref{cond petit dt} or \ref{cond small jacobian}.

Then the following is true:
\begin{enumerate}[label={(\alph*)}]
\item \label{unicite nu} $\nu=T\wedge T^u$ is the unique measure of maximal entropy $\log(d)$ in $\Br_\A=\bigcup_{n\in \N} f^{-n}(U)$,
\item \label{eq per pts} if $Per_n$ denote the set of periodic points of period $n$ then 
$$ \nu_n:=\dfrac{1}{d^n}\sum_{\kappa \in Per_n\cap \Br_\A} \delta_\kappa \rightarrow \nu, \text{  as } n\rightarrow\infty$$
\item \label{conditionals}  the conditionals of $\nu$ on unstable manifolds are induced by $T$.
\end{enumerate}
\end{theoreme}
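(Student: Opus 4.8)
The plan is to derive the three statements from the laminarity of $T$ in $\Br_\A$ subordinate to the stable manifolds of $\nu$ (established in the preceding sections), together with the properties of $\nu$ and $T^u$ recorded in Theorem~\ref{th dinh,da-ta}. The first step is an elementary reduction to the attractor: if $m$ is any $f$-invariant probability measure with $\supp m\subset\Br_\A$, then $f_*m=m$ forces $m(f^{-n}U)=m(U)$ for every $n$, and since $f^{-n}U\nearrow\Br_\A$ this gives $m(U)=1$; then $\supp m=\supp(f_*m)=\overline{f(\supp m)}$ and iteration yield $\supp m\subset\A$. In particular $Per_n\cap\Br_\A=Per_n\cap\A$, and by Theorem~\ref{th dinh,da-ta}\ref{ccl all hyp} every invariant probability measure on $\A$ of entropy $\log d$ is hyperbolic of saddle type; thus \ref{unicite nu} reduces to showing that $\nu$ is the unique such measure.

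For \ref{conditionals} I would use the uniformly laminar structure of $T^u$ near the hyperbolic measure $\nu$: on a Pesin block $\Rr$ with $\nu(\Rr)>0$ the local unstable manifolds $W^u_{\mathrm{loc}}(x)$, $x\in\Rr$, laminate a flow box and there $T^u=\int[W^u_{\mathrm{loc}}(x)]\,d\tau(x)$ for a transverse measure $\tau$ (modulo the usual exhaustion of $\A$ by Pesin blocks). Since the slicing of $T$ by the plaques $W^u_{\mathrm{loc}}(x)$ varies continuously — so that the cohomological product $T\wedge T^u$ coincides with the geometric one — one obtains $\nu|_\Rr=\int\bigl(T\wedge[W^u_{\mathrm{loc}}(x)]\bigr)\,d\tau(x)=\int T|_{W^u_{\mathrm{loc}}(x)}\,d\tau(x)$, which is precisely the disintegration of $\nu$ over the leaf space $(\Rr,\tau)$. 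By uniqueness of the disintegration along the measurable partition into unstable manifolds, the conditional of $\nu$ on $W^u(x)$ is the normalized slice $T|_{W^u(x)}$, i.e.\ it is induced by $T$.

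For \ref{unicite nu}, take $m$ ergodic with $h_m(f)=\log d$ in $\Br_\A$; by the reduction $\supp m\subset\A$ and $m$ is hyperbolic of saddle type. From Pesin theory for $m$, pushing forward a holomorphic disk $\Delta\subset W^u(x)$ inside an unstable manifold of an $m$-generic point, I expect the suitably normalized limit $S=\lim_n c_n^{-1}(f^n)_*[\Delta]$ to exist with negligible boundary contribution $c_n^{-1}(f^n)_*[\partial\Delta]$, so that $S\in\Cc_{(1,1)}(U)$ is supported on $\A$; by the uniqueness in Theorem~\ref{th dinh,da-ta}\ref{ccl unicite}, $S$ is a multiple of $T^u$. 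Running the computation of the previous paragraph in reverse — the laminarity of $T$ along stable manifolds, which are $m$-a.e.\ plaques of the laminar structure of $T$ and meet the unstable manifolds of $m$ transversally — should recover $m$ as the geometric product $T\wedge S$, hence $m=T\wedge T^u=\nu$ after matching masses. This is the $\proj^2$ counterpart of the Bedford--Lyubich--Smillie uniqueness argument: the entropy hypothesis forces the unstable conditionals of $m$ to be the $T$-slices, and this, together with the stable laminar structure of $T$, pins $m$ down as $T\wedge T^u$.

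For \ref{eq per pts}, the saddle periodic points of period $n$ in $\A$ are isolated transverse intersections of period-$n$ pieces of stable manifolds (plaques of the laminar structure of $T$) with period-$n$ pieces of unstable manifolds (plaques of that of $T^u$); counted with multiplicity I would evaluate them by a localized intersection number near $\A$ equal to $d^n(1+o(1))$ and show that the normalized counting measures converge to $T\wedge T^u=\nu$ by the same geometric-intersection continuity — the period-$n$ unstable plaques, normalized by $d^n$, carrying transverse data converging to those of $T^u$, while each stable plaque of $T$ is cut by them into nearly-periodic points distributed as $T|_{W^u}$. Alternatively one can prove $\#(Per_n\cap\A)=d^n(1+o(1))$, check that every weak-$*$ limit of $\nu_n$ is an invariant probability measure on $\A$ of entropy $\ge\log d$ (a Katok-type horseshoe/separation estimate on Pesin blocks), and conclude with \ref{unicite nu}. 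I expect the main obstacle to be the uniqueness argument for \ref{unicite nu}: making rigorous the geometric intersection of $T$ with the \emph{a priori} only measurably-laminar current associated with a general hyperbolic measure $m$, and controlling the normalizations so that ``the unstable conditionals of $m$ are the $T$-slices'' genuinely forces $m=\nu$. The laminarity of $T$ subordinate to stable manifolds is exactly what drives this, as in Bedford--Lyubich--Smillie, but adapting it to the non-invertible dynamics and to the generically non-algebraic attractor $\A$ — as well as establishing the uniformly laminar structure of $T^u$ on Pesin blocks, used throughout and resting on \ref{cond petit dt}/\ref{cond small jacobian} — is the delicate point.
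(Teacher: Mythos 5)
Your overall architecture (reduce to $\A$, identify unstable conditionals, uniqueness, then equidistribution) matches the paper's, but the two load-bearing steps are missing or would fail as stated. For uniqueness \ref{unicite nu}, the BLS-style argument you sketch — build a current $S$ from unstable disks of an arbitrary maximal-entropy measure $m$ and ``run the computation in reverse'' against the stable laminar structure of $T$ to recover $m=T\wedge S$ — does not adapt to a non-invertible $f$: there is no entropy argument controlling the \emph{stable} conditionals of $m$ (one cannot apply the Jacobian/entropy argument to $f^{-1}$), and the laminar plaques of $T$ produced in Theorem \ref{th laminaire dans bassin} are stable manifolds of $\nu$, so there is no reason the stable manifolds of $m$ sit inside them; ``after matching masses'' is exactly the unproved step, as you acknowledge. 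The paper's mechanism is different and avoids this: Proposition \ref{prop conditionnelle induite par green} is proved for \emph{every} invariant measure of entropy $\log d$ with $\chi_u>0\geq\chi_s$ and support off $\supp(\mu_{eq})$ (via the Jacobian of the conditionals, the identity $h_{\hm}(f,\hxi^u)=\log d$ and concavity of $\log$, on the natural extension, where the partition $\hxi^u$ subordinate to unstable manifolds actually makes sense — downstairs ``the partition into unstable manifolds'' is not well defined for a non-invertible map). Then the uniqueness of the invariant current \ref{ccl unicite} gives $\frac{1}{d^n}f^n_*[D^u(\hx)]\to\rho(\hx)\,T^u$, hence (wedging with $T$, which has continuous potential) $f^n_*$ of the conditional measures of any competitor $\m$ converges to $\nu$; comparison with the Birkhoff averages yields $\hm\leq\hnu$ and equality of probability measures. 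Note also that your route to \ref{conditionals}, via a presumed uniformly laminar structure of $T^u$ on Pesin blocks with the correct transverse measure, is itself only asserted, and even if completed it would identify the conditionals of $\nu$ alone, which cannot feed the uniqueness proof: one needs the statement for arbitrary maximal-entropy measures, which is precisely what the paper's entropy argument delivers.

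For \ref{eq per pts}, the count is not established in your proposal. The paper's upper bound uses the Lefschetz--Hopf fixed point theorem on $\overline U$ together with the hypothesis \ref{cond U tub}/\ref{cond U rectract} that $\overline U$ is a euclidean retract of the curve $\ell$, so $H^*(\overline U,\Q)\cong H^*(\proj^1,\Q)$ and $\sharp(Per_n\cap U)=d^n+1$, with no periodic points in $\Br_\A\setminus U$; your ``localized intersection number near $\A$ equal to $d^n(1+o(1))$'' bypasses this topological input, which is the only place the retract hypothesis enters and is essential. Your alternative — that every weak-$*$ limit of $\nu_n$ has entropy $\geq\log d$ by a Katok-type estimate — is unjustified: entropy does not pass to weak-$*$ limits of periodic-orbit measures, and the paper never needs such a claim. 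Instead the lower bound comes from the shadowing lemma in Pesin boxes plus the tube estimate $\hnu\left(\pi_0^{-1}(L^s_n(\hp))\cap\hP\cap\hf^{-n}(\hP)\right)\leq d^{-n}\hnu(\hP)$, which itself rests on the holonomy invariance (Proposition \ref{inv holonomie}) and on Proposition \ref{prop conditionnelle induite par green}, together with mixing \ref{cond forte on nu}; this shows every cluster value of $\nu_n$ dominates $\nu$, and the Lefschetz mass bound then forces equality. Your preliminary reduction ($\supp m\subset\A$, hence hyperbolicity via \ref{ccl all hyp}) is fine, but the proposal as written does not close either of the two essential steps.
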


We are going to prove each point of Theorem \ref{th intro} separately under  weaker assumptions, see section \ref{section assumptions}.
In \cite{moi:taf,DINH}, the curve $\ell$ is a line but it can also be a conic as in \cite[section 5]{for:wei}. See also Section \ref{section hyp th lam bassin}, where we extend the results of \cite{DINH} to this setting.

\bigskip

With the conclusions \ref{ccl unicite} and \ref{ccl forte on nu} of Theorem \ref{th dinh,da-ta},  it is quite elementary to prove that there  exists an open neighboorhood $W$ of $\A$ such that $(f^n)_*(\sigma_T|_W)\rightharpoonup\nu$ for the weak-$*$ topology.
Nevertheless, this does not provide, a priori, a description of the dynamics of $\sigma_T$-a.e. point $p\in W$.
To this end, we are going to establish the laminarity of the Green current in the basin of attraction.

This question has been studied by several autheurs, see for exemple \cite{deT:laminarite,deT:concentration} and  \cite{duj:fatou}.
  Dujardin \cite{duj:contre} constructed  examples of skew-products of $\C^2$, that can be extended as endomorphisms of 
$\proj^2$, for which the  Green current is not laminar near an invariant fibre $F$ that is not attracting.

In  \cite{for:sib:hyp}, the authors established the laminarity of the Green current in the neighboorhood of a uniformly hyperbolic saddle set. We obtain the following result in the non-uniformly hyperbolic case. This generalises  \cite{for:sib:hyp} and \cite{bed:jon} to the basin of an attracting set and, more generally, to the basin of a hyperbolic measure of saddle type.

\begin{theoreme}\label{th laminarite sans ens. att.}
Let $f$ be an endomorphism of $\proj^2$ of degree $d$ and $T$ be its  Green current.
Assume that there exists an invariant current  $T^u$ ($\frac{1}{d}f_*T^u=T^u$) such that
$
\text{the measure } \nu=T\wedge T^u \text{ is ergodic,  of entropy }\log(d), \text{  hyperbolic of saddle type,}$
and $\supp(\nu)\cap \supp(\mu_{eq})=\emptyset$, where $\mu_{eq}=T\wedge T$ is the equilibrium measure. 

Then there  exists a non trivial positive current $T^s$ of bidegree $(1,1)$ which is  laminar and  subordinate to the stable manifolds $\bigcup_{x\in \supp(\nu)} W^s(x)$ such that $T^s\leq T$
and for $\nu$-a.e. $x\in \A$,  $W^s(x)\subset \supp(T^s)$. 
\end{theoreme}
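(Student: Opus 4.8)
The plan is to realise $T^s$ as a weak limit of normalised preimages of a generic line, cut down to the pieces that shadow $\nu$-typical backward orbits, everything controlled through Pesin theory; the assumption $\supp(\nu)\cap\supp(\mu_{eq})=\emptyset$ places $\supp(\nu)$ in the part $J_1\setminus J_2$ of the Green current, where a non-trivial laminar $T^s\le T$ can exist.

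\smallskip\noindent\emph{Step 1: Pesin theory.} As $\nu$ is ergodic, hyperbolic of saddle type and $h_\nu(f)=\log d>0$, it has one positive and one negative Lyapunov exponent $\lambda^+>0>\lambda^-$. Working in the natural extension $(\hX,\hf,\hnu)$, Oseledets--Pesin theory gives, for $\hnu$-a.e.\ $\hx$, a holomorphic local stable disk $W^s_{loc}(x)$ (depending only on $x=\pi(\hx)$) and a holomorphic local unstable disk $W^u_{loc}(\hx)$, together with an exhaustion by compact Pesin sets $\Lambda_\eps$, $\nu(\Lambda_\eps)\to 1$, on which the sizes of these disks, the angle between the stable and unstable directions, and the contraction/dilation rates are uniform. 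Around a density point $x_0\in\Lambda_\eps$ the disks $\{W^s_{loc}(y):y\in\Lambda_\eps\cap B\}$ fill a ``Pesin box'' $B$ with an essentially $C^1$ lamination structure, transverse to a holomorphic disk $\Sigma$ carrying $W^u_{loc}(\hx_0)$.

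\smallskip\noindent\emph{Step 2: Construction of $T^s$.} Fix a line $\ell$ generic enough that $d^{-n}(f^n)^*[\ell]=d^{-n}[f^{-n}(\ell)]\to T$. For $n$ large and each inverse branch of $f^n$ over a point of $\ell$ along a backward orbit of length $n$ that $\eps$-shadows a $\nu$-regular trajectory (with $\ell$ transverse to the unstable direction at the base point), let $D_n\subset f^{-n}(\ell)$ be the corresponding connected piece and put $S_n:=d^{-n}\sum D_n$. A graph-transform ($\lambda$-lemma) argument, run along the natural extension to cope with the non-invertibility of $f$, shows that inside a Pesin chart $D_n$ is a nearly flat graph over the stable direction, $C^1$-close to the stable manifold through its base point $y_n$; since $D_n$ is the $f^{-n}$-image of a bounded piece of $\ell$, its stable extent is dilated at a uniform exponential rate, so in the limit its leaves are entire global stable manifolds of $\nu$-regular points. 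Thus $0\le S_n\le d^{-n}[f^{-n}(\ell)]$, so any weak-$*$ limit $T^s$ of a subsequence satisfies $0\le T^s\le T$; on each Pesin box $T^s$ is uniformly laminar subordinate to $\{W^s_{loc}(y)\}$, hence $T^s$ is laminar and subordinate to $\bigcup_{x\in\supp(\nu)}W^s(x)$; and, the base points $y_n$ equidistributing towards $\nu$, one gets $W^s(x)\subset\supp(T^s)$ for $\nu$-a.e.\ $x$.

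\smallskip\noindent\emph{Step 3: Non-triviality and the main obstacle.} It remains to check $T^s\neq 0$, i.e.\ $\liminf_n\|S_n\|>0$. For a fixed small neighbourhood $V$ of $\supp(\nu)$, since $V\cap J_1$ is a non-empty relatively open subset of $\supp(T)$, the mass $d^{-n}\|[f^{-n}(\ell)]\cap V\|$ tends to $\|T|_V\|>0$; the point is that the shadowing branches must carry a definite proportion of this mass. This is where the borderline value $\log d$ of the entropy and the hyperbolicity of $\nu$ genuinely enter, through a sharp entropy/counting estimate (variational principle and Brin--Katok, in the spirit of Briend--Duval's analysis of preimages), together with the wedge structure $\nu=T\wedge T^u$ and the fact that $V$ can be kept disjoint from $J_2$ by $\supp(\nu)\cap\supp(\mu_{eq})=\emptyset$, so that no mass escapes to the chaotic locus. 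The crux of the proof is twofold: the \emph{uniformity} of the graph-transform estimates of Step 2 over a Pesin set (and checking that the pieces are $C^1$-close enough to stable plaques for a geometric laminarity criterion, cf.\ \cite{deT:laminarite,duj:fatou}, to identify the limit as laminar subordinate to the stable manifolds), and, more delicate, the \emph{mass lower bound} of Step 3 --- with entropy equal to, rather than strictly greater than, $\log d$ --- which is the genuinely new point compared with the uniformly hyperbolic setting of \cite{for:sib:hyp} or with situations where an attracting set supplies a trapped neighbourhood for free.
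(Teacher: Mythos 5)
Your Steps 1 and 2 follow the same general strategy as the paper (Pesin boxes, preimages of a transversal disk cut to the branches returning near $\nu$, weak limits bounded by $T$), but the proposal leaves the decisive point unproved: the positive mass of $T^s$. In Step 3 you only gesture at ``a sharp entropy/counting estimate (variational principle and Brin--Katok, in the spirit of Briend--Duval)'' and at the wedge structure $\nu=T\wedge T^u$, without saying how these would bound from below the transverse $T$-mass carried by the shadowing branches. This is exactly the difficulty the paper isolates, and a Briend--Duval--type counting of preimages does not by itself give it: what is needed is a link between the $\nu$-measure of the set of returning histories and the number of disjoint ``tubes'' $L^s_n(\hp)$, and that link is supplied in the paper by two specific ingredients you never establish. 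First, the conditional measures of $\hnu$ on a partition subordinate to unstable manifolds are induced by $T$ (this is where $h_\nu(f)=\log d$ enters, via the Jacobian identity $-\int\log q\,\ds\hnu=\log d=-\int\log p\,\ds\hnu$ and concavity of $\log$, à la Bedford--Lyubich--Smillie/Ledrappier). Second, the transverse measures $T\intgeom[D]$ are invariant under the stable holonomy inside a common Lyapunov chart (proved with the continuity of the potential of $T$, Koebe distortion and a Morse covering argument). Together these give $\hnu\bigl(\pi_0^{-1}(L^s_n(\hp))\cap\hP\cap\hf^{-n}(\hP)\bigr)\leq d^{-n}\hnu(\hP)$, hence, by ergodicity (to get infinitely many $n$ with $\hnu(\hP\cap\hf^{-n}(\hP))\geq\hnu(\hP)^2(1-\eps)$), at least $d^n\hnu(\hP)(1-\eps)$ disjoint tubes, each containing one cut-off preimage of the transversal disk; that is the mass lower bound. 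Without this (or an equivalent mechanism) your $T^s$ could be zero, and the theorem is not proved.

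Two smaller points. The laminarity of the limit also needs an argument: a weak limit of sums of disks need not be laminar, and the paper gets uniform laminarity of cluster values because the cut-off preimages sit in pairwise disjoint tubes $L^s_n(\hp)$, then builds the global $T^s$ as a supremum $\sup_n\max\{T^s_\hP,\dots,\frac{1}{d^n}f^{n*}T^s_\hP\}$ of currents shown to intersect correctly (using $T\wedge T=0$ off $\supp(\mu_{eq})$ and the geometric-intersection criterion), rather than as a plain subsequential limit. Finally, the statement $W^s(x)\subset\supp(T^s)$ for $\nu$-a.e.\ $x$ is not a consequence of ``base points equidistribute''; in the paper it comes from Poincar\'e recurrence of $\hP$ (so each $W^s_L(\hp)$ lies in the support of some cluster value), followed by the pullback construction and the supremum over Pesin boxes.
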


Among other things, Theorem \ref{th laminarite sans ens. att.} gives us information on the topological structure of the Julia set $J_1=\supp (T)$. We deduce the following:

\begin{coro}
Let $\nu$ be as in Theorem \ref{th laminarite sans ens. att.}.
The basin $\Br_\nu=\left\lbrace p \, | \, \dfrac{1}{n} \sum_{i=0}^n \delta_{f^i(p)} \rightharpoonup \nu  \right\rbrace$ of $\nu$ is of positive measure for $\sigma_T$. 
\end{coro}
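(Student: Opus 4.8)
The plan is to carve out of $\Br_\nu$ a set of positive $\sigma_T$-measure using the laminar current $T^s$ furnished by Theorem \ref{th laminarite sans ens. att.}. First I would fix a Borel set $G\subseteq\supp(\nu)$ with $\nu(G)=1$ consisting of points $x$ that are forward generic for $\nu$, i.e. $\frac1n\sum_{i=0}^{n-1}\delta_{f^i(x)}\rightharpoonup\nu$, and that moreover satisfy $W^s(x)\subseteq\supp(T^s)$: such a $G$ exists by Birkhoff's ergodic theorem applied to a countable dense family of test functions, using that $\nu$ is ergodic, together with the last assertion of Theorem \ref{th laminarite sans ens. att.}. The statement then reduces to two claims: (i) $\bigcup_{x\in G}W^s(x)\subseteq\Br_\nu$, and (ii) $\sigma_T\big(\bigcup_{x\in G}W^s(x)\big)>0$.

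For (i), I would note that if $x\in G$ and $p\in W^s(x)$ then $\dist(f^n(p),f^n(x))\to 0$, so for any $\varphi\in C^0(\proj^2)$ uniform continuity gives $\varphi(f^i(p))-\varphi(f^i(x))\to 0$, hence also in Cesàro mean; since $\frac1n\sum_{i<n}\varphi(f^i(x))\to\int\varphi\,d\nu$ by genericity of $x$, the same limit holds with $p$ in place of $x$. Running this over a countable dense family of $\varphi$, together with the uniform bound $\|\varphi\|_\infty$, yields $\frac1n\sum_{i<n}\delta_{f^i(p)}\rightharpoonup\nu$, i.e. $p\in\Br_\nu$. This step is routine.

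For (ii), set $\sigma_{T^s}=T^s\wedge\omega_{FS}$. Since $0\le T^s\le T$ we get $\sigma_{T^s}\le\sigma_T$ as positive measures, and since $T^s\neq 0$ is a positive $(1,1)$-current its mass $\int\sigma_{T^s}$ is strictly positive, so $\sigma_{T^s}\neq 0$. Because $T^s$ is laminar and subordinate to $\bigcup_{x\in\supp(\nu)}W^s(x)$, in each flow box it disintegrates as $\int[\Delta_\alpha]\,dm(\alpha)$ over holomorphic plaques $\Delta_\alpha\subseteq W^s(x_\alpha)$ with $x_\alpha\in\supp(\nu)$; hence $\sigma_{T^s}=\int(\mathrm{area}_{\Delta_\alpha})\,dm(\alpha)$ is carried by $\bigcup_{x\in\supp(\nu)}W^s(x)$. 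I would then upgrade this to: $\sigma_{T^s}$ is carried by $\bigcup_{x\in G}W^s(x)$, equivalently $\sigma_{T^s}\big(\bigcup_{x\in N}W^s(x)\big)=0$ whenever $\nu(N)=0$. Granting this, $\sigma_{T^s}\big(\bigcup_{x\in G}W^s(x)\big)=\int\sigma_{T^s}>0$, so with (i) and $\sigma_{T^s}\le\sigma_T$ we conclude $\sigma_T(\Br_\nu)>0$.

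The hard part will be exactly this last transverse estimate in (ii): it does not follow formally from the bare statement of Theorem \ref{th laminarite sans ens. att.}, since knowing only that $T^s$ is laminar, subordinate to the stable lamination, and has support containing $W^s(x)$ for $\nu$-a.e. $x$ does not by itself prevent $\sigma_{T^s}$ from being concentrated on stable manifolds of the $\nu$-negligible set of base points whose orbit fails to equidistribute. I would derive it from the construction of $T^s$ in the proof of Theorem \ref{th laminarite sans ens. att.} --- obtained by integrating local (Pesin) stable manifolds against the lift of $\nu$ to the natural extension --- so that the transverse measures $m$ above project to $\nu$; concretely this identifies $\sigma_{T^s}$ restricted to the stable lamination with the conditionals of (a constant multiple of) $\nu$ along it, which yields the required domination of $\sigma_{T^s}$ by $\nu$ in the transverse direction. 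Everything else is soft once that is in hand.
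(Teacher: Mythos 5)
Your overall skeleton (use $T^s\le T$, so $\sigma_{T^s}\le\sigma_T$ and $\sigma_{T^s}\neq 0$; points on the stable manifold of a $\nu$-generic point share its Birkhoff limit) matches the paper, and you correctly isolate the crux: nothing in the bare statement of Theorem \ref{th laminarite sans ens. att.} controls the transverse measure of $T^s$. But your proposed resolution of that crux rests on a mischaracterization of the construction. In the paper, $T^s_\hP$ is \emph{not} obtained by integrating Pesin local stable manifolds against the lift of $\nu$; it is the supremum of cluster values of $\frac{1}{d^n}f_n^*[\Delta]$, where $\Delta$ is a fixed vertical transversal disk pulled back through the $\sim d^n\hnu(\hP)(1-\varepsilon)$ tubes $L^s_n(\hp)$ (Theorem \ref{constr. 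TshP}). Consequently its transverse (marking) measure $\nu^s$ is never identified with the conditionals or the transverse projection of $\nu$, and the domination ``$\sigma_{T^s}$ is carried by $\bigcup_{x\in G}W^s(x)$ for a full-$\nu$-measure set $G$ of generic base points'' that you want is not available — this is exactly the open-ended issue the paper discusses in Section \ref{section rmk} (whether $T^s_\hP\intgeom T^u$ can be made to exhaust $\nu$). So as written, step (ii) has a genuine gap.

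The paper closes this gap by a weaker, sufficient statement that avoids any transverse domination: it suffices that $\nu^s$-almost every stable plaque of $T^s_\hP$ \emph{meets} the set of $\nu$-generic points, since then the entire plaque lies in $\Br_\nu$ by your step (i). To get this, restrict $T^u$ to the unstable manifolds of the Pesin box, obtaining a uniformly woven current $T^u_\hP=\int[\Delta^u_\beta]\,\ds\nu^u(\beta)$ (by \cite[Theorem 1.6]{DDG3}), and form the geometric intersection $\nu_\hP=T^s_\hP\intgeom T^u_\hP$, which is well defined (the corollary after Theorem \ref{constr. TshP}) and satisfies $\nu_\hP\le T\wedge T^u=\nu$ because $T^s_\hP\le T$. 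Since $\nu$ is ergodic, $\nu(\Br_\nu)=1$, so $\nu_\hP$ gives no mass to the complement of $A_\varepsilon=\pi_0(\hR_\varepsilon)\cap\Br_\nu$; Fubini applied to $\nu^s\otimes\nu^u$, together with the lower bound \eqref{mesure positive} on the intersection with the plaques through $\hP$, shows that for $\nu^s$-a.e. $\alpha$ one has $W^s_\alpha\cap A_\varepsilon\neq\emptyset$. Hence $\sigma_{T^s_\hP}=\int[W^s_\alpha]\intgeom\omega_{FS}\,\ds\nu^s(\alpha)$ is carried by $\Br_\nu$, and $\sigma_T(\Br_\nu)\ge M(\sigma_{T^s_\hP})>0$. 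If you replace your transverse-domination claim by this intersection-plus-Fubini argument, the rest of your proposal goes through unchanged.
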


Let us emphasis that in Theorem \ref{th laminarite sans ens. att.}, and its corollary, $\nu$ is not necessary supported on an attracting set.
It is not clear, without further assumption, that $T^s=T$ on an open set, or, that the geometric intersection $T^s\intgeom T^u$ equals $\nu=T\wedge T^u$, see section \ref{section rmk}.
Adding the conditions \ref{cond U tub}, and \ref{cond petit dt} or \ref{cond small jacobian}, to have \ref{ccl unicite}, we may use a push-pull argument and establish the following result.

\begin{theoreme}\label{th laminaire dans bassin}
Let $f$ be an endomorphism of $\proj^2$ of degree $d$. If $f$ admits a trapping region $U$, such that the conditions \ref{cond U tub}, and \ref{cond petit dt} or \ref{cond small jacobian}, are satisfied, then the Green current $T$ of $f$ is laminar and subordinate to the stable manifolds $\bigcup_{x\in \supp(\nu)} W^s(x)$ in the basin of attraction $\Br_\A=\underset{n\geq 0}{\bigcup}f^{-n}(U)$.
\end{theoreme}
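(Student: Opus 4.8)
The plan is to deduce Theorem~\ref{th laminaire dans bassin} from Theorem~\ref{th laminarite sans ens. att.}, feeding it the invariant current and the ergodic data supplied by Theorem~\ref{th dinh,da-ta}, and then to use the uniqueness statement $(C_1)$ through a push--pull argument to upgrade the partial laminar current $T^s$ to the full Green current on the basin. First I would check that the hypotheses of Theorem~\ref{th laminarite sans ens. att.} are met: by Theorem~\ref{th dinh,da-ta} there is $T^u\in\Cc_{(1,1)}(U)$ with $\tfrac1d f_*T^u=T^u$ and $\nu=T\wedge T^u$ mixing of entropy $\log d$. Since a positive closed $(1,1)$--current supported in $U$ and fixed by $\tfrac1d f_*$ has support in $\bigcap_n f^n(U)=\A$, one gets $\supp\nu\subseteq\A$; being ergodic (it is even mixing), of entropy $\log d$, and carried by $\A$, $\nu$ is hyperbolic of saddle type by conclusion $(C_3)$. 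Finally $\A\subseteq U\neq\proj^2$ and $U$ is a trapping region, so $\supp\mu_{eq}\cap U=\emptyset$ and hence $\supp\nu\cap\supp\mu_{eq}=\emptyset$. Theorem~\ref{th laminarite sans ens. att.} then produces a non-trivial positive laminar $(1,1)$--current $T^s\le T$, subordinate to $\mathcal W:=\bigcup_{x\in\supp\nu}W^s(x)$, with $W^s(x)\subseteq\supp T^s$ for $\nu$-a.e.\ $x$. Two elementary facts are then recorded: $\mathcal W\subseteq\Br_\A$ (stable manifolds of points of $\A$ lie in the basin), and $\mathcal W$ is totally invariant, $f^{-1}(\mathcal W)=\mathcal W=f(\mathcal W)$, because $f(\supp\nu)=\supp\nu$ and $f$, resp.\ $f^{-1}$, carries pieces of stable manifolds to pieces of stable manifolds of images, resp.\ preimages.

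Next comes the push--pull step, which is the heart of the proof. Since $\mathrm{Crit}(f)$ is a curve, $f$ is a local biholomorphism off a Lebesgue-null set, and the normalised pull-back $\tfrac1d f^*$ preserves the class of currents that are laminar, subordinate to $\mathcal W$, and $\le T$ (pull-backs of stable disks are finite unions of stable disks, and laminarity survives the critical curve by the standard branched-cover argument). The decisive input is the uniqueness $(C_1)$: adapting its proof — which uses assumption \ref{cond U tub} through the fact that $H^2(U)\cong H^2(\ell)\cong\R$ — to the operator $\tfrac1d f^*$, one obtains that on $U$, and then on all of $\Br_\A=\bigcup_n f^{-n}(U)$ (an invariant current being equal to its own normalised iterated pull-back), the only positive closed $(1,1)$--current fixed by $\tfrac1d f^*$ is a multiple of $T$. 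Combining this rigidity with the push--pull mechanism, one first shows that $T$ coincides with $T^s$ on a neighbourhood $V$ of $\A$: equivalently, that the geometric intersection $T^s\intgeom T^u$ equals $\nu=T\wedge T^u$, so that $(T-T^s)\wedge T^u=0$ and, $T^u$ being a genuine unstable(-laminar) current filling $\A$ in the transverse direction, the excess $T-T^s$ vanishes near $\A$; in particular $T|_V$ is laminar and subordinate to the stable manifolds. Then for an arbitrary $p\in\Br_\A$ one picks $n$ with $f^n(p)\in V$: away from the analytic, hence negligible, set $\bigcup_{k\ge0}f^{-k}(\mathrm{Crit})$ the identity $T=\tfrac1{d^n}(f^n)^*T$ exhibits $T$ near $p$ as the pull-back of the laminar current $T|_V$, hence laminar near $p$ and subordinate to $f^{-n}(\mathcal W)=\mathcal W$; the exceptional analytic set is handled by a limiting argument or directly through the branched-cover description. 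Hence $T$ is laminar and subordinate to $\bigcup_{x\in\supp\nu}W^s(x)$ throughout $\Br_\A$.

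I expect the main obstacle to be precisely this upgrade from $T^s$ to $T$. As the remark after Theorem~\ref{th laminarite sans ens. att.} notes, in general $T^s$ may be strictly smaller than $T$ and $T^s\intgeom T^u$ need not equal $\nu$, and since $\tfrac1d f^*$ preserves the mass of $T^s$, merely iterating the pull-back of $T^s$ cannot by itself recover $T$: the additional mass has to come from the rigidity provided by $(C_1)$, i.e.\ from the uniqueness of the invariant $(1,1)$--current in the trapping region, which in turn relies on the tubular-neighbourhood hypothesis \ref{cond U tub} through the triviality of $H^2(U)$. Turning this into a proof requires the quantitative control behind a push--pull estimate — no escape of mass under $f^{-n}$ on the backward-invariant set $\Br_\A$, persistence of laminarity in the limit via area and genus bounds for the pulled-back Pesin stable disks in the spirit of de~Th\'elin and Dujardin, and control of the construction across the critical locus — which is where most of the work lies.
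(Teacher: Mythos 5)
Your reduction to Theorem \ref{th laminarite sans ens. att.} and your identification of the crux (upgrading the partial current $T^s$ to the full Green current on $\Br_\A$) match the paper, but the mechanism you propose for that upgrade has a genuine gap, and it is not the one the paper uses. You invoke \ref{ccl unicite} as a rigidity statement for the operator $\frac1d f^*$ ("the only positive closed $(1,1)$-current fixed by $\frac1d f^*$ on $\Br_\A$ is a multiple of $T$") and then argue $T^s\intgeom T^u=\nu$, hence $(T-T^s)\wedge T^u=0$, hence $T=T^s$ near $\A$. None of these steps is substantiated: \ref{ccl unicite} concerns $\frac1d f_*$-invariant closed currents supported in the trapping region (pull-back does not even preserve supports in $U$, and the currents $T^s_\hP$, $T_n$ are not closed in $\proj^2$, so no cohomological/$H^2(U)$ argument applies to them); the implication $(T-T^s)\wedge T^u=0\Rightarrow T-T^s=0$ near $\A$ is false for general positive currents; and the identity $T^s=T$ on an open set, equivalently $T^s\intgeom T^u=\nu$, is exactly what the paper's introduction flags as \emph{not} known without further work --- the proof of Theorem \ref{th laminaire dans bassin} never establishes it. Your closing remark that "$\frac1d f^*$ preserves the mass of $T^s$, so iterating the pull-back cannot recover $T$" is also a misconception that blocks the actual argument: $T^s_\hP$ is not closed, so its mass is not a cohomological invariant of normalized pull-back.

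What the paper does instead is a genuine push--pull by duality. Under \ref{cond U tub} and \ref{cond petit dt} or \ref{cond small jacobian} one has the equidistribution property \ref{cond CV}: $\frac1{d^n}f^n_*\phi\to\langle T,\phi\rangle\,T^u$ for test $(1,1)$-forms $\phi$ supported in $\Br_\A$. Pairing $\frac1{d^n}f^{n*}(\psi T^s_\hP)$ against $\phi$ equals pairing $\psi T^s_\hP$ against $\frac1{d^n}f^n_*\phi$ (using the continuous local potential $u^s_\hP$ of $T^s_\hP$ and the decay of $\frac1{d^n}f^n_*(d^c\phi)$, $\frac1{d^n}f^n_*(\ddc\phi)$), which yields Proposition \ref{laminar in the basin}: $\frac1{d^n}f^{n*}T^s_\hP\to cT$ on $\Br_\A$ with $c=M(T^s_\hP\intgeom T^u)>0$. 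Combined with the max construction $T_n=\max\{T^s_\hP,\dots,\frac1{d^n}f^{n*}T^s_\hP\}$ (uniformly laminar, subordinate to the stable manifolds, $\le T$), this exhibits a positive multiple of $T$ on the basin as a non-decreasing limit of uniformly laminar currents, which is the laminarity claimed. So the missing idea in your proposal is precisely this weak convergence of the normalized pull-backs of $T^s_\hP$ to $cT$, obtained from \ref{cond CV} by duality; without it, your rigidity-based route does not close, and the auxiliary claims you lean on ($T^s=T$ near $\A$, pull-back rigidity for $\frac1d f^*$) are either unproved or explicitly left open by the paper.
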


\begin{coro}
Let $f$ be as in Theorem \ref{th laminaire dans bassin}  and $\nu$ be  the measure in \ref{ccl forte on nu}.
For  $\sigma_T$ almost every point $p\in\Br_\A$ $$ \dfrac{1}{n} \sum_{i=0}^n \delta_{f^i(p)} \rightharpoonup \nu .$$
\end{coro}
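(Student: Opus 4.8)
The plan is to upgrade the weak-$*$ convergence $(f^n)_*(\sigma_T|_W)\rightharpoonup\nu$ recorded in the Introduction to a pointwise statement, using the laminar structure of $T$ on $\Br_\A$ furnished by Theorem~\ref{th laminaire dans bassin}. By that theorem and the definition of laminarity (Definition~\ref{def laminaire}), on $\Br_\A$ the current $T=T^s$ is, locally over a flow box, an integral $T=\int[\Delta_a]\,dm(a)$ over holomorphic plaques $\Delta_a$, each contained in a single stable manifold $W^s(x)$ with $x\in\supp(\nu)$; hence $\sigma_T=T\wedge\omega_{FS}$ disintegrates locally as $\int\omega_{FS}|_{\Delta_a}\,dm(a)$, and in particular $\sigma_T$-a.e.\ point of $\Br_\A$ lies on a stable manifold of a point of $\supp(\nu)$. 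If $p\in W^s(x)$ then $\dist(f^n(p),f^n(x))\to0$, so $\tfrac1n\sum_{i=0}^{n-1}\bigl(\varphi(f^i(p))-\varphi(f^i(x))\bigr)\to0$ for every $\varphi\in C(\proj^2)$; thus the Cesàro limit points of $\tfrac1n\sum_{i=0}^{n-1}\delta_{f^i(\cdot)}$ are the same along an entire stable manifold. The statement therefore reduces to showing that $\sigma_T$-a.e.\ $p\in\Br_\A$ lies on $W^s(x)$ for some $\nu$-generic $x$.

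To prove this reduced statement, observe first that distinct stable manifolds are disjoint, so the flow-box decompositions glue into a measurable lamination of a subset of $\Br_\A$ of full $\sigma_T$-measure, and the relation $\tfrac1d f_*T=T$ makes the induced transverse measure $\hat m$ invariant under the leaf map $\hat f$ that $f$ induces on leaves. Since the plaques lie in stable manifolds of the hyperbolic saddle measure $\nu$, forward iteration contracts them, $\diam(f^n(\Delta_a))\to0$ for $\hat m$-a.e.\ $a$ (this uses Pesin theory together with conclusion~\ref{ccl all hyp} and the subordination part of Theorem~\ref{th laminarite sans ens. att.}); consequently the limiting Birkhoff average is leaf-constant, and the Birkhoff ergodic theorem applied to $(\hat f,\hat m)$ yields, for $\hat m$-a.e.\ $a$, convergence of the normalized averages $\tfrac1n\sum_{i=0}^{n-1}(f^i)_*\bigl(\omega_{FS}|_{\Delta_a}\bigr)$ to an $f$-invariant probability $\mu_a$ depending only on the leaf. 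Integrating over $\hat m$ and using that $\tfrac1n\sum_{i=0}^{n-1}(f^i)_*(\sigma_T|_W)$ converges to a multiple of $\nu$ exhibits $\nu$ as a convex combination of the $\mu_a$; since $\nu$ is mixing (conclusion~\ref{ccl forte on nu}), hence ergodic and extremal among $f$-invariant probabilities, this forces $\mu_a=\nu$ for $\hat m$-a.e.\ $a$. Unwinding the leaf-constancy gives $\tfrac1n\sum_{i=0}^{n-1}\delta_{f^i(p)}\rightharpoonup\nu$ for $\sigma_T$-a.e.\ $p\in W$, and a routine propagation along orbits (every $p\in\Br_\A=\bigcup_n f^{-n}(U)$ eventually enters $W$, deleting finitely many terms leaves a Cesàro limit unchanged, and $f$-preimages of $\sigma_T$-null sets are $\sigma_T$-null) extends the conclusion to $\sigma_T$-a.e.\ $p\in\Br_\A$.

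The main obstacle is the second step: assembling the local disintegrations into a genuinely global, $\hat f$-invariant measured lamination, and, above all, controlling the $\hat m$-measure of the ``bad'' plaques — those not contracted by forward iteration, or lying in stable manifolds of orbits that are not $\nu$-generic (for instance basins of zero-entropy subsystems inside $\A$, which a priori could carry $T$-mass). Showing this bad set is $\hat m$-negligible is exactly where the full strength of Theorem~\ref{th laminarite sans ens. att.} is needed: $T^s$, hence $T$ on $\Br_\A$, is modeled on the Pesin stable lamination of $\nu$, with $W^s(x)\subset\supp(T^s)=\supp(T)\cap\Br_\A$ for $\nu$-a.e.\ $x$, which ties the transverse measure to $\nu$. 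If one prefers not to revisit that construction, an alternative is to take as input the corollary to Theorem~\ref{th laminarite sans ens. att.}, giving $\sigma_T(\Br_\nu)>0$, and then, since $\Br_\nu$ is totally $f$-invariant and saturated for the stable lamination, invoke the ergodicity of $\nu$ to upgrade positive $\sigma_T$-measure to full $\sigma_T$-measure.
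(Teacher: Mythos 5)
Your first reduction (points on the stable manifold of a $\nu$-generic point inherit the Birkhoff limit $\nu$, via \eqref{eq meme limite}, so everything hinges on showing that $\sigma_T$-a.e.\ $p\in\Br_\A$ lies on $W^s(q)$ for some $q\in\Br_\nu$, not merely $q\in\supp(\nu)$) is exactly the right one and matches the paper. But the step that carries the whole weight — tying the transverse measure of the lamination to $\nu$-genericity — is where your argument has a genuine gap, which you partly acknowledge. The global $\hat f$-invariant measured lamination with transverse measure $\hat{\m}$ is never constructed; the ``Birkhoff ergodic theorem applied to $(\hat f,\hat{\m})$'' does not by itself produce, for a.e.\ plaque, a limiting leafwise measure $\mu_a$ (Birkhoff gives a.e.\ convergence of time averages of a fixed integrable observable on a fixed invariant system, and you have neither the invariant system nor a justification that the Ces\`aro limits of $(f^i)_*(\omega_{FS}|_{\Delta_a})$ exist leaf by leaf); and the control of the ``bad'' plaques is precisely the missing content. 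Most seriously, your fallback is a non sequitur: from $\sigma_T(\Br_\nu)>0$ and total invariance of $\Br_\nu$, ergodicity of $\nu$ gives no information whatsoever about the $\sigma_T$-measure of $\Br_\A\setminus\Br_\nu$; to upgrade positive to full $\sigma_T$-measure of an invariant set you would need ergodicity of $\sigma_T$ (restricted to the basin), which is not available and is essentially what the corollary is secretly proving.

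The paper closes this gap differently, at the level of the uniformly laminar piece rather than of a global lamination of $T$. For a fixed Pesin box $\hP$ one has explicit markings $\nu^s,\nu^u$ of $T^s_\hP$ (Theorem \ref{constr. TshP}) and of the restriction $T^u_\hP$ of $T^u$ to $W^u(\hP)$, and the geometric product $\nu_\hP=T^s_\hP\intgeom T^u_\hP$ is dominated by $\nu$. Since $\nu(\Br_\nu)=1$ (Birkhoff plus ergodicity, hypothesis \ref{cond on nu}), Fubini applied to $\nu^s\otimes\nu^u$ shows that $\nu^s$-a.e.\ stable disk $W^s_\alpha$ meets the set of $\nu$-generic points — this is where Proposition \ref{prop conditionnelle induite par green} and the product structure replace your ergodic decomposition — hence $\sigma_{T^s_\hP}$-a.e.\ point lies in $\Br_\nu$. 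The passage from $T^s_\hP$ to $T$ on all of $\Br_\A$ is then not an abstract ergodicity upgrade but the quantitative push--pull convergence of Proposition \ref{laminar in the basin} (hypothesis \ref{cond CV}): the currents $T_n=\max_{i\le n}\frac1{d^i}f^{i*}T^s_\hP$ increase to $T$ on $\Br_\A$, each $\sigma_{T_n}$ gives zero mass to $\Br_\A\setminus\Br_\nu$ because $\Br_\nu$ is totally invariant, and monotone convergence yields $\sigma_T(\Br_\A\setminus\Br_\nu)=0$. So your outline is repairable, but only by importing these two ingredients (the $T^s_\hP\intgeom T^u_\hP\le\nu$ Fubini argument and the convergence $\frac1{d^n}f^{n*}T^s_\hP\to cT$), which your current write-up replaces by unproved or incorrect shortcuts.
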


The difficulty in Theorem \ref{th laminarite sans ens. att.} is to prove that $T^s$ has  positive mass.
The particular case where the line at infinity $L_\infty$ is an attracting set and $\nu=T\wedge [L_\infty]$ was handled by E. Bedford and M. Jonsson \cite{bed:jon}. They use $L_\infty$ as a global transversal to bound this mass from below. 
Here, we use instead ideas of \cite{BLS}, to get the holonomy invariance along stable manifolds and the disintegration of $\nu$ on local unstable manifolds.

In \cite{deT:laminarite,DDG3}, the laminarity of the Green current is obtained
by controlling the genus of the curves $f^{-n} L$, where $L$ is a line such that $\frac{1}{d^n}f^{n*}[L]\rightarrow T$. It seems that these arguments do not apply in this setting. See Section \ref{section rmk} for more details.

\bigskip

This article is organized as follows. We start by recalling some facts about laminar currents and Pesin theory in our context. We then study the geometric structure of $\nu=T\wedge T^u$, in particular its disintegration on local unstable manifolds.

Section \ref{section construction} is devoted to the construction of the current $T^s$ from Theorem \ref{th laminarite sans ens. att.}. We then prove Theorem \ref{th laminaire dans bassin}.
In section \ref{section equidistribution}, we establish the equidistribution of periodic points in $U$ on $\nu$.
The uniqueness of the measure of maximal entropy is proved in section \ref{section uniqueness}.
Theorem \ref{th intro} is a consequence of Proposition \ref{prop conditionnelle induite par green}, Corollary \ref{coro de la laminarite}, Theorem \ref{th equi pt per} and Theorem \ref{th unicité mesure entropie max}.
We end this paper with some remarks and open questions.

\bigskip

Acknowledge : The author is grateful to Mattias Jonsson for interesting discussions on the example of section \ref{section hyp th lam bassin}. This research have been partially supported by the FMJH (Governement Program:  ANR-10-CAMP-0151-02), the ANR project LAMBDA (Governement Program: ANR-13-BS01-0002) and the NSF grant DMS-1266207.

\section{Preliminary}
\subsection{Attracting set}
Let $f$ be an endomorphisms of $\proj^2$.

\begin{prop}\label{prop A non trivial}
Let $\A$ be an attracting set for $f$ then either $\A$ is trivial, i.e. $\A=\proj^2$ or $\A$ is a finite union of attracting periodic orbits, or all trapping region of $\A$ contains a curve.
\end{prop}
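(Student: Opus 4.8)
The plan is to reduce everything to Kobayashi hyperbolicity together with a Brody-curve argument; the three alternatives then appear as a simple dichotomy on trapping regions. First I record the logical skeleton. If $\A=\proj^2$ we are in the first case, and if $\A$ is a finite union of attracting periodic orbits we are in the second. So it suffices to assume that $\A\neq\proj^2$ and that $\A$ is not a finite union of attracting periodic orbits, and to show that under these hypotheses every trapping region of $\A$ contains a curve. At this point I invoke, in contrapositive form, the fact recalled in the introduction that an attracting set admitting a \emph{Kobayashi hyperbolic} trapping region is a finite union of attracting periodic orbits: under our standing assumption no trapping region of $\A$ can be Kobayashi hyperbolic, so the statement reduces to the implication ``$V$ a trapping region of $\A$ which is not Kobayashi hyperbolic $\Rightarrow$ $V$ contains a curve''.

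The core of the proof is a Brody-type construction. Fix such a $V$, i.e.\ an open set $V\subset\proj^2$ with $f(V)\Subset V$ which is not Kobayashi hyperbolic. Since $V$ is not Kobayashi hyperbolic, one can produce holomorphic disks $\varphi_n\colon\DD\to V$ whose derivatives are unbounded in the Fubini--Study metric, and Brody's reparametrization lemma, applied in the compact ambient manifold $\proj^2$, then gives---after precomposing with automorphisms of $\DD$ and dilations and passing to a subsequence---a nonconstant entire curve $\varphi\colon\C\to\proj^2$ which is a locally uniform limit of maps into $V$; in particular $\varphi(\C)\subseteq\overline{V}$. Now I use the trapping property: since $f$ is continuous and $f(V)\Subset V$, we have $f(\overline{V})\subseteq\overline{f(V)}\subseteq V$, hence $f\circ\varphi\colon\C\to V$ has image inside the open set $V$. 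Moreover $f\circ\varphi$ is nonconstant, because $f$ is a finite morphism of $\proj^2$ (of topological degree $d^2$), so its fibres are finite and it cannot collapse the connected noncompact set $\varphi(\C)$ to a point. Hence $(f\circ\varphi)(\C)$ is a curve contained in $V$, as desired; since $V$ was an arbitrary trapping region of $\A$, the third alternative holds.

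The one genuinely delicate point is the precise invocation of Brody's reparametrization lemma for an open set $V$ whose boundary is not under control: the limiting Brody curve is only guaranteed to take values in $\overline{V}$, not in $V$ itself---its image could well run along $\partial V$, for instance along a line contained in the boundary---and this is exactly why one must compose with $f$ to push it back into the open trapping region, using $f(\overline{V})\subseteq V$. Everything else is routine bookkeeping: the contrapositive of the Kobayashi-hyperbolic case, the elementary inclusion $f(\overline{V})\subseteq\overline{f(V)}\subseteq V$, and the finiteness of $f$. If one wanted the curve to be \emph{rational}, i.e.\ an image of $\proj^1$, rather than merely entire, an additional bubbling and area-estimate argument would be required; but the statement asks only for a curve, so I would stop there.
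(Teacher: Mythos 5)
Your reduction (handle the cases $\A=\proj^2$ and $\A$ a finite union of attracting orbits via the Kobayashi-hyperbolic alternative, then treat a non-hyperbolic trapping region $V$) matches the paper's setup, and the Brody part of your argument is sound as far as it goes: non-hyperbolicity of the relatively compact open set $V$ gives a nonconstant entire map $\varphi\colon\C\to\overline V$, and composing with $f$, using $f(\overline V)\subseteq\overline{f(V)}\Subset V$ and the finiteness of the fibres of $f$, yields a nonconstant entire curve with image in $V$. The gap is in the last sentence of your core argument: ``hence $(f\circ\varphi)(\C)$ is a curve contained in $V$.'' The image of a nonconstant entire map is in general \emph{not} a curve in the sense needed here --- it need not be an analytic subset at all, and its closure can fail to be one-dimensional. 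In this paper ``curve'' means a compact (algebraic) curve: the proposition feeds into condition \ref{cond U tub}, where $U$ is a tubular neighbourhood of a curve $\ell$ (a line or a conic) onto which $U$ retracts, and the Lefschetz argument in Section \ref{section equidistribution} uses $H^i(\overline U,\Q)\simeq H^i(\ell,\Q)\simeq H^i(\proj^1,\Q)$. What you have actually proved is only that $V$ is not Brody hyperbolic, which is essentially a reformulation of the hypothesis, not the conclusion; your closing remark that ``the statement asks only for a curve'' is exactly where the argument falls short of what the statement is meant to deliver.

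The paper closes this gap differently: from non-hyperbolicity it produces a nonzero positive closed current $\tau$ of bidegree $(1,1)$ supported in $U$ (for instance an Ahlfors--Nevanlinna current associated with the entire curve, which is supported in $\overline U$ and can be pushed forward by $f$ to have support in $f(\overline U)\subset U$), and then invokes Guedj's approximation theorem \cite[Theorem 0.1]{guedj}, which furnishes algebraic curves lying in the open set $U$ and approximating $\tau$. That current-theoretic step (or some substitute, such as the bubbling/area argument you mention and set aside) is precisely the missing ingredient that converts your entire curve into an actual curve contained in the trapping region.
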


\begin{proof}
Let $U$ be a trapping region of $\A$. If $U=\proj^2$ or $U$ is Kobayashi hyperbolic then $\A$ is trivial. Assume that $U$ is not Kobayashi hyperbolic, then there exists a positive closed current $\tau$ of bidegree $(1,1)$ with support on $U$. Since $U$ is an open set, by \cite[Theorem 0.1]{guedj}, $U$ contains  curves that approximate $\tau$.
\end{proof}

We also recall the definition of small topological degree attracting sets.

\begin{defi}\label{def small dt}
An attracting set $\A$ is said to be of \textbf{small topological degree} if there exists a trapping region $U$ and $n\in \N$ such that
$$\forall p\in f^n(U), \, \card(f^{-n}(p)\cap U)<d^n.$$
\end{defi}


\subsection{Assumptions}\label{section assumptions}

As mentioned in the introduction, we are going to prove each item of Theorem \ref{th intro} and Theorem \ref{th laminaire dans bassin} under weaker assumptions.
Here is the different hypotheses we will use:
\begin{enumerate}[label={$(H_\arabic*)$}]
\setcounter{enumi}{-1}
\item \label{cond att set} $U$ is a trapping region such that $ U\neq \proj^2 $ and  $U$ is not Kobayashi hyperbolic.
\end{enumerate}
We will always denote by $\A=\underset{n\in N}{\bigcap}f^n(U)$ the attracting set associated to $U$.
\begin{enumerate}[label={$(ER)$}]
\item \label{cond U rectract} $
U \text{ is a euclidean retract}\footnote{See \cite[Proposition/Definition IV 8.5]{dol}} \text{ of } \ell,
$
\end{enumerate}

\begin{enumerate}[label={$(CV)$}]
\item \label{cond unicite} 
there exists a unique invariant current $ T^u\in \Cc_{(1,1)}(U),
$ where $\Cc_{(1,1)}(U)$ is the set of positive closed currents of bidegree $(1,1)$ with supports in $U$.
\end{enumerate}
Or the weaker version:

\begin{enumerate}[label={$(CV^*)$}]
\item \label{cond CV}
There exists a current $ T^u\in C_{(1,1)}(U) \text{ such that for all } \phi$ (1,1)-form  with continuous coefficients and support in  $\Br_\A$  we have 
$\dfrac{1}{d^n}f^n_* \phi \rightarrow \langle T,\phi\rangle T^u$.
\end{enumerate}

\begin{enumerate}[label={$(H_1)$}]
\item \label{cond forte on nu} $
\nu=T\wedge T^u \text{ is mixing, of entropy }\log(d) \text{  and hyperbolic of saddle type.}$
\end{enumerate}
Or the weaker version:
\begin{enumerate}[label={$(H_1^*)$}]
\item \label{cond on nu} $
\nu=T\wedge T^u \text{ is ergodique, of entropy }\log(d) \text{  and hyperbolic of saddle type.}$
\end{enumerate}

\begin{enumerate}[label={$(H_2)$}]
\item \label{cond all hyp} 
All measures of entropy $\log(d)$ and supports on $\A$ admit a non positive Lyapunov exponent.
\end{enumerate}

All the examples known so far of attracting sets satisfy the conditions \ref{cond U tub} and \ref{cond small jacobian} but we believe that there exists a larger class of attracting sets.
For example, the assumption \ref{cond all hyp} is true as soon as the interior of  $\A$ is the empty set or $$\limsup_{n\to+\infty}\left(\int_U(f^n)^*(\omega^2)\right)^{1/n}<d,$$
see \cite{moi:taf}.

\subsection{Laminar and woven currents, geometric intersection}

We refer to \cite{dem,din:sib:survey} for general results on currents. 

\begin{defif}
A current $S$ is \textbf{uniformly woven} in an open set $U$ if there exists a constant $C>0$ such that 
$$S_{|U}=\int [M_a] \text{ d}\lambda (a)$$
where  $\lambda$ is a measure on the set of  holomorphic chains $M_a$ of area bounded by $C$. 
Such a measure is called a \textbf{marking}.
\bigskip

A current $S$ is \textbf{uniformly laminar} if for every point $x\in \supp (T)$ there exist two open sets $\B_1\subset \B_2$  biholomorphic to the  bidisk  $\D\times \D$ such that $x\in \B_1\subset \B_2$ and in good coordinates
$$S_{|\B_1}=\int [M_a\cap \B_1] \text{ d}\lambda (a)$$
where  $\lambda$ is a measure on the disk $\lbrace 0 \rbrace\times D$ and the $M_a$ are disjoints graphs $M_a=\{(x,f_a(x))\}$ in $\B_2$ such that $f_a(0)=a$.
\end{defif}

\begin{rmk}
The marking is not unique.
In fact, in $\C^2$ we have 
$$\omega=i\ds z\wedge \ds \overline{z} + i\ds w \wedge \ds \overline{w} = \frac{1}{2} i\ds (z+w) \wedge \ds (\overline{z+w}) + \frac{1}{2} i\ds (z-w) \wedge \ds (\overline{z-w}). $$
\end{rmk}

\begin{defif}\label{def laminaire}
A \textbf{woven (resp. laminar)} current is the non-decreasing limit of currents of the form
$$S_\Qr=\sum S_{Q_i}$$
where $\Qr$ is a partition in disjoints open sets (resp. open sets biholomorphic to the  bidisk) $Q_i$ and $S_{Q_i}$ is a  uniformly woven (resp. laminar) current in $Q_i$.
\end{defif}

\begin{defif}
Let $R,S$ be two uniformly laminar currents. 
We say that $R,S$ \textbf{intersect correctly} if, for every $x\in \supp(S)\cap \supp(R)$ there exists an open set $U$ biholomorphic to the bidisk such that $R,S$ are equal to
$R_{|U}=\int [M_a] \text{ d}\lambda (a)$ and $S_{|U}=\int [N_{a'}] \text{ d}\sigma (a')$,
all intersection of the graphs $M_a\cap N_{a'}$ are open sets in $M_a$ and in $N_{a'}$ and $M_a\cap \partial N_{a'}$ (resp. $\partial M_a\cap N_{a'}$) has no mass for $[M_a]$ (resp. $[N_{a'}]$).
\end{defif}

\begin{prop}[{\cite[Lemma 6.11]{BLS}}] \label{equiv Lemma 6.11}
If $T_1,\cdots,T_n$ are uniformly laminar currents which intersect correctly then $\max (T_1,\cdots,T_n)$ is a well defined laminar current.
\end{prop}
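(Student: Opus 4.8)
The plan is to reduce everything to a local model inside a flow box. I would take $\max(T_1,\dots,T_n)$ to mean the uniformly laminar current that is subordinate to the lamination obtained by amalgamating the plaques of all the $T_i$ and whose transverse measure is the supremum (in the lattice of positive measures) of the transverse measures of the $T_i$; the content of the statement is then that this local recipe is consistent, so patches to a global current, and that the result is laminar. Throughout, the hypothesis ``intersect correctly'' is used in the following guise: two plaques coming from different $T_i$ that meet must coincide near the meeting point, since a nonempty relatively open intersection of two graphs forces equality by the identity principle.

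\emph{Step 1: a common lamination near each point.} Fix $x\in\bigcup_i\supp(T_i)$ and let $I=\{i:\ x\in\supp(T_i)\}$. For $i,j\in I$, correct intersection at $x$ gives a bidisk in which the plaques of $T_i$ and of $T_j$ meet along relatively open subsets; as the plaque of $T_i$ through $x$ and the plaque of $T_j$ through $x$ both contain $x$, they agree as germs of analytic curves at $x$. Thus all $T_i$, $i\in I$, share a common plaque through $x$, and I would pick a bidisk $B$ around $x$ adapted to this germ and small enough that every plaque of every $T_i$ meeting $B$ is a graph over the first coordinate (possible since each individual lamination varies continuously in $C^1$, and there are finitely many $i$; shrink $B$ further to avoid $\supp(T_j)$ for $j\notin I$). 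Applying correct intersection at an arbitrary common point of two plaques shows that any two plaques meeting inside $B$ coincide, so the collection $\Gamma$ of distinct plaques meeting $B$ consists of pairwise disjoint graphs, depending continuously on the transverse parameter by a normal family and Hurwitz argument. Hence $\Gamma$ is a lamination of a closed subset of $B$, and each $T_i|_B$ is uniformly laminar and subordinate to $\Gamma$, say $T_i|_B=\int[M_g]\,d\lambda_i(g)$ with $\lambda_i$ a finite positive measure on a transversal $\tau$.

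\emph{Step 2: definition and well-definedness.} Let $\mu:=\lambda_1\vee\cdots\vee\lambda_n$ be the supremum of the $\lambda_i$ in the lattice of finite positive measures on $\tau$ (equivalently $\mu=\max_i(\varphi_i)\,m$ if $\lambda_i=\varphi_i\,m$ with $m=\sum_k\lambda_k$, independently of the reference $m$), and set $\max(T_1,\dots,T_n)|_B:=\int[M_g]\,d\mu(g)$, and $0$ near points outside $\bigcup_i\supp(T_i)$. Integrating currents of integration over complex disks against a transverse measure always yields a closed current, so this is a positive closed current; it is uniformly laminar by construction, and it dominates each $T_i$ since $\mu\ge\lambda_i$. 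To glue the local definitions I would use that the plaques of each $T_i$ are pieces of intrinsically defined analytic curves, so the laminations $\Gamma$ agree on overlaps, while the transverse measures $\lambda_i$ are holonomy invariant and intrinsic to $T_i$ (the standard consistency of uniformly laminar representations, \cite{BLS}); since holonomy maps are homeomorphisms they preserve the lattice supremum, so the $\mu$ also correspond under holonomy. Hence the local currents patch into a globally well-defined positive closed current $\max(T_1,\dots,T_n)$ with $\supp\max(T_1,\dots,T_n)=\bigcup_i\supp(T_i)$.

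\emph{Step 3: laminarity, and the main difficulty.} By Step 2, $\max(T_1,\dots,T_n)$ is uniformly laminar in an arbitrarily small neighbourhood of each point of its support, and these neighbourhoods form a Vitali cover for its trace measure $\sigma$, so I would extract a countable pairwise disjoint subfamily $(Q_k)$ of bidisks with $\sigma\big(\supp\max(T_1,\dots,T_n)\setminus\bigcup_kQ_k\big)=0$. Since a positive closed current assigns no mass to a $\sigma$-null set, $\max(T_1,\dots,T_n)=\sum_k\max(T_1,\dots,T_n)|_{Q_k}=\lim_N\sum_{k\le N}\max(T_1,\dots,T_n)|_{Q_k}$ is a non-decreasing limit of currents uniformly laminar on disjoint bidisks, hence laminar in the sense of Definition \ref{def laminaire}. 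I expect the genuine work to be in Step 1 — turning the \emph{pairwise} correct-intersection hypothesis into one honest lamination near each point, i.e. choosing the common flow box and checking that plaques from all families are mutually disjoint and vary continuously — together with the holonomy-consistency check in Step 2 that makes the supremum of transverse measures glue; the measure-lattice manipulations and the covering argument are routine.
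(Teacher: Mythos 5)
Your argument is correct and is essentially the standard one: the paper itself offers no proof of this proposition, quoting it directly from \cite[Lemma 6.11]{BLS}, and your Steps 1--2 (using correct intersection to amalgamate the plaques into one local lamination, then taking the supremum of the transverse measures and gluing by holonomy invariance) reconstruct precisely that argument, with Step 3 giving the routine passage from local uniform laminarity to laminarity in the sense of Definition \ref{def laminaire}. The only places where you lean on standard but unstated facts (essential uniqueness of the plaque germ through a point of the support of a uniformly laminar current, and the identification of your supremum-of-transverse-measures current with the least upper bound, which is what makes the local definitions patch) are exactly the points glossed over in \cite{BLS} as well, so there is no genuine gap.
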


\begin{defif}\label{def int geom}
Let $D,D'$ be two disks. We define the geometrical intersection $ D \intgeom D' $ of $D$ and $D'$ has the sum on the Dirac mass of isolated  intersection  points, counted with multiplicity, of $D\cap D'$.

Let $T_1,T_2$ be uniformly woven currents in an open set $U$ and $m_1,m_2$ be marking of $T_1,T_2$. The \textbf{geometrical intersection} of $T_1$ and $T_2$ is defined by $$T_1\intgeom T_2= \int [D_1]\intgeom [D_2] \text{d} m_1\otimes m_2 (D_1,D_2).$$
\end{defif}

\begin{rmk}
Notice that this definition depends, a priori, on the chose of the markings $m_1$ and $m_2$. The following proposition says that the two definitions of intersection coincide, when there are both defined. So, in this case, the geometrical intersection does not depends on the chose of the markings, what should be the general case.
\end{rmk}

\begin{prop}\cite[Proposition 2.6]{DDG2}
Let  $T_1$ and $T_2$ be uniformly woven current, if $T_1$ or $T_2$ has bounded potentials, or more generally if $T_1\in L^1_{loc}(T_2)$ or $T_2\in L^1_{loc}(T_1)$, then $T_1 \intgeom T_2 = T_1\wedge T_2$.
\end{prop}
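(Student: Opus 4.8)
The plan is to localize, replace everything by local potentials, and reduce — via two Fubini‑type exchanges — to the classical computation of the wedge product of the integration currents of two analytic curves that meet in a discrete set. Both $T_1\wedge T_2$ and $T_1\intgeom T_2$ are local, so I would fix a small ball $B\Subset U$ on which $T_1=\ddc u_1$ with $u_1$ plurisubharmonic. Writing each chain of the marking $m_1$ as $M_a=\sum_i n_{a,i}\{h_{a,i}=0\}$ with $h_{a,i}$ bounded on $B$, put $g_a=\sum_i\frac{n_{a,i}}{2\pi}\log|h_{a,i}|$, which we may take $\le 0$ on $B$ and which satisfies $[M_a]=\ddc g_a$. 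Then $\ddc\bigl(\int g_a\,d\lambda_1(a)\bigr)=\int[M_a]\,d\lambda_1(a)=T_1$ on $B$, so $u_1=\int g_a\,d\lambda_1(a)+\rho$ for some pluriharmonic, hence smooth, $\rho$. The hypothesis $T_1\in L^1_{loc}(T_2)$ says $u_1\in L^1_{loc}(T_2)$, hence $\int g_a\,d\lambda_1(a)=u_1-\rho\in L^1_{loc}(T_2)$; by Tonelli, $\int\!\!\int|g_a|\,dT_2\,d\lambda_1(a)=\int(\rho-u_1)\,dT_2<\infty$, so $g_a\in L^1_{loc}(T_2)$ for $\lambda_1$‑a.e.\ $a$ (so $[M_a]\wedge T_2$ is well defined), and similarly $g_a\in L^1_{loc}([M_{a'}])$ for $(\lambda_1\otimes\lambda_2)$‑a.e.\ $(a,a')$.

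Next I would push the wedge product through the two integrals. Since $\ddc$ is weakly continuous, $\rho$ is smooth and $T_2$ is closed (so $\ddc(\rho T_2)=0$), testing against a smooth form and using Fubini — legitimate by the integrability just obtained — gives
$$T_1\wedge T_2=\ddc(u_1T_2)=\ddc\Bigl(\bigl(\textstyle\int g_a\,d\lambda_1(a)\bigr)T_2\Bigr)=\int\ddc(g_aT_2)\,d\lambda_1(a)=\int[M_a]\wedge T_2\;d\lambda_1(a).$$
Running the same argument on the inside, now with $T_2=\int[M_{a'}]\,d\lambda_2(a')$ and $g_a\in L^1_{loc}(T_2)$ so that $g_aT_2=\int g_a[M_{a'}]\,d\lambda_2(a')$, yields $[M_a]\wedge T_2=\int[M_a]\wedge[M_{a'}]\,d\lambda_2(a')$, whence
$$T_1\wedge T_2=\int\!\!\int[M_a]\wedge[M_{a'}]\;d\lambda_1(a)\,d\lambda_2(a').$$

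It then remains to identify the integrand. First, for $(\lambda_1\otimes\lambda_2)$‑a.e.\ pair the curves $M_a$ and $M_{a'}$ have no common irreducible component: if they shared a component $C$, then $g_a\equiv-\infty$ on $C$ while $[M_{a'}]$ charges $C$, so $\int|g_a|\,d[M_{a'}]=+\infty$; were this to hold on a set of positive $\lambda_1\otimes\lambda_2$‑measure, we would get $\int|g_a|\,dT_2=\int\!\!\int|g_a|\,d[M_{a'}]\,d\lambda_2=+\infty$ on a set of positive $\lambda_1$‑measure, contradicting what was shown above. For a properly intersecting pair, $[M_a]\wedge[M_{a'}]=\sum_{x\in M_a\cap M_{a'}}i(x;M_a,M_{a'})\,\delta_x$, the multiplicities being exactly those in Definition \ref{def int geom}; this is the classical computation of the wedge of two integration currents over curves meeting in isolated points (see \cite{dem}). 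Substituting and comparing with Definition \ref{def int geom},
$$T_1\wedge T_2=\int\!\!\int[M_a]\intgeom[M_{a'}]\;d\lambda_1(a)\,d\lambda_2(a')=T_1\intgeom T_2,$$
which also shows a posteriori that, in this situation, the geometric intersection does not depend on the chosen markings.

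The main obstacle is that the two delicate points are precisely where the integrability hypothesis is used and where care is needed: justifying the Fubini exchanges that pull $\ddc$ inside the integrals over the markings, and — more seriously — showing that the ``improper'' locus where two chains of the markings share a component is $(\lambda_1\otimes\lambda_2)$‑negligible. Without the assumption $T_1\in L^1_{loc}(T_2)$ (which is automatic when one of the two potentials is bounded) neither side of the identity is even defined, since a positive mass of common components would produce an infinite self‑intersection along those components.
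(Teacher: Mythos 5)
The paper gives no argument for this statement --- it is quoted directly from \cite[Proposition 2.6]{DDG2} --- so there is no internal proof to compare with. Your proposal is, in outline, the standard argument behind that reference: decompose both currents along their markings, use the integrability hypothesis plus Tonelli to see that $\lambda_1\otimes\lambda_2$-almost every pair of chains intersects properly (no common component), push $\ddc$ through the two integrals to get $T_1\wedge T_2=\int\!\!\int [M_a]\wedge[M_{a'}]\,\ds\lambda_1\,\ds\lambda_2$, and identify the wedge of a properly intersecting pair of curves with the sum of Dirac masses weighted by local intersection numbers. That architecture is correct, the reduction of the ``common component'' issue to the a.e.\ finiteness of $\int|g_a|\,\ds\sigma_{[M_{a'}]}$ is exactly the right mechanism, and the case $T_2\in L^1_{loc}(T_1)$ is symmetric.

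The one step that is not justified as written is the very first one: you take defining functions $h_{a,i}$ ``bounded on $B$'', set $g_a=\sum_i\frac{n_{a,i}}{2\pi}\log|h_{a,i}|\le 0$, and assert $\ddc\bigl(\int g_a\,\ds\lambda_1(a)\bigr)=T_1$. Normalizing potentials only by an upper bound does not make the family $\{g_a\}$ uniformly bounded in $L^1_{loc}$: replacing a defining function $h$ by $e^{-N\varphi}h$, with $\varphi$ holomorphic, $\mathrm{Re}\,\varphi>0$ on a compact subset of $B$ and tending to $0$ at a boundary point, keeps the same divisor and essentially the same sup norm while driving the potential to $-\infty$ on that compact set. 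So with your normalization $\int g_a\,\ds\lambda_1$ could be identically $-\infty$ and need not be a potential of $T_1$; the measurability of $a\mapsto g_a$ also has to be arranged. This is precisely where the uniform area bound in the definition of a uniformly woven current --- which your proof never uses --- must enter: for instance, after a generic linear projection each chain of area at most $C$ in a slightly larger ball is the zero set of a Weierstrass-type polynomial of degree at most $C'$ with normalized coefficients, which produces potentials $g_a$ depending measurably on $a$, uniformly bounded above and uniformly bounded in $L^1_{loc}$. With such a choice $\int g_a\,\ds\lambda_1$ is locally integrable, differs from $u_1$ by a pluriharmonic function, and the rest of your argument (the Tonelli transfer of integrability, the two Fubini exchanges of $\ddc$ with the markings, and the classical identification of $[M_a]\wedge[M_{a'}]$ with $[M_a]\intgeom[M_{a'}]$ for proper intersections) goes through as you wrote it.
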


The geometric intersection can be use to characterize uniformly laminar current and if two uniformly laminar currents intersect correctly.

\begin{prop}\label{prop intersection nulle}
Let $T_1,T_2$ be uniformly woven currents.
If $T_1\intgeom T_1=0$ then $T_1$ is uniformly laminar and if 
$T_1\intgeom T_2=0$ then $T_1$ and $T_2$ intersect correctly.
\end{prop}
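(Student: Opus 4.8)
The plan is to derive both assertions from one soft observation plus one geometric fact, leaving a (routine but non-trivial) measure-theoretic reduction as the real work. First I would note that for any two disks $D,D'$ the object $[D]\intgeom[D']$ is a locally finite sum of Dirac masses with positive integer weights, hence a \emph{positive} measure; since $T_1\intgeom T_2=\int [D_1]\intgeom[D_2]\,dm_1\otimes m_2(D_1,D_2)$, the hypothesis $T_1\intgeom T_2=0$ forces $[D_1]\intgeom[D_2]=0$ for $m_1\otimes m_2$-almost every pair $(D_1,D_2)$, and likewise for $T_1\intgeom T_1=0$. The geometric fact I would then invoke is the dichotomy: two distinct irreducible holomorphic disks in a ball meet along a discrete set, each point of which is counted in $[D]\intgeom[D']$ with multiplicity $\ge 1$; hence $[D]\intgeom[D']=0$ if and only if $D$ and $D'$ are disjoint. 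For holomorphic chains with several components one applies this componentwise after discarding the null set of pairs that share a component.

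Next, for the first statement, I would fix $x\in\supp(T_1)$, a bidisk $\B_2\ni x$ with good coordinates $(z,w)$ in which $T_1|_{\B_2}=\int[M_a]\,d\lambda(a)$ with the $M_a$ of area $\le C$, and a smaller bidisk $\B_1=\DD_r\times\DD_r\Subset\B_2$. Choosing the $w$-direction generically, the area bound yields a uniform bound on the number of sheets of each $M_a$ over $\DD_r$, so I can cut every $M_a\cap\B_1$ into its finitely many irreducible graph components and rewrite $T_1|_{\B_1}=\int_{\mathcal G}[G]\,d\tilde\lambda(G)$, where $\tilde\lambda$ is the resulting (measurable) marking on the space $\mathcal G$ of graphs $\{w=g(z)\}$ over $\DD_r$; subdivision creates no isolated intersections, so $[G]\intgeom[G']=0$ for $\tilde\lambda\otimes\tilde\lambda$-a.e.\ $(G,G')$, and by the dichotomy $\tilde\lambda$-a.e.\ two graphs are disjoint or equal. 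Finally the continuous evaluation map $G=\{w=g(z)\}\mapsto g(0)\in\DD_r$ separates $\tilde\lambda$-generic distinct graphs (a common value at $0$ would give a common point), and disintegrating $\tilde\lambda$ along it rewrites $T_1|_{\B_1}$ in exactly the form $\int_{\{0\}\times\DD_r}[M_a\cap\B_1]\,d\lambda(a)$ with the $M_a$ pairwise disjoint graphs through $a$; that is the definition of uniform laminarity at $x$.

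For the second statement I would assume, as seems intended, that also $T_1\intgeom T_1=T_2\intgeom T_2=0$, so both are uniformly laminar by the previous paragraph. Fixing $x\in\supp(T_1)\cap\supp(T_2)$ and a common bidisk, subdividing both markings into graphs over a sub-bidisk as above, and combining $T_1\intgeom T_2=0$ with the dichotomy, an $m_1$-generic graph $M_a$ of $T_1$ and an $m_2$-generic graph $N_{a'}$ of $T_2$ are either disjoint or equal. Then every $M_a\cap N_{a'}$ is open in both (being empty or all of $M_a=N_{a'}$), and $M_a\cap\partial N_{a'}$, $\partial M_a\cap N_{a'}$ are empty or reduce to a boundary circle, which carries no mass for $[M_a]$, resp.\ $[N_{a'}]$. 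This is precisely the requirement that $T_1,T_2$ intersect correctly, and Proposition~\ref{equiv Lemma 6.11} then yields that $\max(T_1,T_2)$ is laminar.

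The geometric dichotomy is immediate from analytic continuation, so the main obstacle is the structural reduction used twice above: cutting holomorphic chains of bounded area into a uniformly bounded number of graphs over a fixed sub-bidisk in a \emph{measurable} way, and then disintegrating the marking along the base-point evaluation map so that ``a.e.\ pair disjoint or equal'' becomes an honest family of pairwise disjoint graphs parametrized by $\{0\}\times\DD_r$. This relies on the space of graphs of bounded area over a disk being Polish, and I would carry out these points following \cite{BLS} and the references therein.
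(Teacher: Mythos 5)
Your first step (positivity of $[D_1]\intgeom[D_2]$, hence vanishing for $m_1\otimes m_2$-a.e.\ pair, plus the dichotomy ``isolated intersection vs.\ common component'') is exactly the first half of the paper's argument. The gap is in how you pass from this almost-everywhere statement to the conclusion. Uniform laminarity (and correct intersection) requires that \emph{all} the disks used in the marking be pairwise disjoint or overlap in open sets, and ``a.e.\ pair is disjoint or equal'' does not yield this by discarding a null set: Fubini only gives that a generic graph meets a $\tilde\lambda$-null set of other graphs, and two generic graphs may still meet each other. Your proposed fix, disintegrating $\tilde\lambda$ along the evaluation $G\mapsto g(0)$, does not close this: if the push-forward of $\tilde\lambda$ under this map is atomless, the set of pairs of graphs sharing the base point over $0$ has $\tilde\lambda\otimes\tilde\lambda$-measure zero, so the hypothesis $T_1\intgeom T_1=0$ says nothing about such pairs, and you cannot conclude that there is a single graph through each base point nor that graphs over distinct base points are disjoint. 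Your closing paragraph acknowledges that turning ``a.e.\ pair disjoint or equal'' into an honest pairwise disjoint family is ``the real work'' and defers it to the literature---but that step \emph{is} the content of the proposition, not a routine reduction.

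The missing idea, which is the paper's actual proof, is the persistence (stability) of isolated intersections of holomorphic disks: if two disks lying in the supports of the markings had an isolated intersection point, then every pair of sufficiently nearby disks would also intersect, and since the original disks are in the supports, such pairs have positive $m_1\otimes m_2$-measure, each contributing at least one Dirac mass to $T_1\intgeom T_2$---contradicting $T_1\intgeom T_2=0$. Hence \emph{every} pair of disks in the supports (not just a.e.\ pair) intersects in a set open in both, which is exactly what uniform laminarity, resp.\ correct intersection, asks for; this is the argument at the beginning of the proof of Lemma 2.5 in \cite{duj:contre} that the paper invokes. Two smaller points: your reduction of the chains $M_a\cap\B_1$ to ``finitely many irreducible graph components'' is not automatic either, since a bounded-area component over a sub-bidisk may be a branched multisheeted cover rather than a graph, so some care (or a choice of coordinates justified by the vanishing of the self-intersection) is needed there too; on the other hand, your reading that the second assertion implicitly presupposes $T_1\intgeom T_1=T_2\intgeom T_2=0$ (so that both currents are uniformly laminar and the notion of correct intersection applies) is consistent with how the proposition is used in the paper.
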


\begin{proof}
The disks in the supports of $T_1$ and $T_2$ are holomorhic disks so if $T_1\intgeom T_1=0$ (resp. $T_1\intgeom T_2=0$) then for almost every pair $(D_1,D_2)$ of disks in the support of $T_1$ (resp. in $\supp(T_1)\times \supp (T_2)$) the intersection $D_1\cap D_2$ is an open set. We conclude using the persistence of the intersection of holomorphic disks. See the beginning of the proof of \cite[Lemma 2.5]{duj:contre}.
\end{proof}

We may also define the intersection between a positive closed current $R$ of bidegree $(1,1)$ with bounded potentials and a woven current.
In fact, if $D$ is a closed disk in an open set $\Omega$,  i.e. $\Omega\cap \partial D=\emptyset$, then $R\wedge [D]$ is well defined in $\Omega$.

\begin{defif}\label{def int geom 2}
Let $R$ be a positive closed current of bidegree $(1,1)$ with bounded potentials and $S=\int [D] \text{d} m(D)$ be a uniformly woven current in an open set $\Omega\subset \C^2$. The \textbf{geometrical intersection} of $R$ and $S$ is defined by $$R\intgeom S= \int R\wedge [D] \text{d} m(D).$$ 
\end{defif}

\begin{prop}
Let $R$ be a positive closed current of bidegree $(1,1)$ with bounded potentials and $S=\int [D] \text{d} m(D)$ be a uniformly woven current in an open set $\Omega\subset \C^2$. If $R$ is also a  uniformly woven current in $\Omega$ then the definitions \ref{def int geom} and \ref{def int geom 2} coincide.
\end{prop}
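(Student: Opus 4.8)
The plan is to identify both quantities with the Bedford--Taylor-type wedge product $R\wedge S$, which makes sense because $R$ has bounded local potentials. For the expression of Definition \ref{def int geom} this is already contained in the cited statement: since $R$ is uniformly woven and has bounded potentials, \cite[Proposition 2.6]{DDG2} gives $R\intgeom S=R\wedge S$, the left-hand side being precisely the quantity of Definition \ref{def int geom} (and, as a by-product, this quantity does not depend on the chosen markings $m_1$ of $R$ and $m_2$ of $S$). It therefore remains to prove that the quantity of Definition \ref{def int geom 2}, namely $\int R\wedge[D]\,\ds m(D)$, is also equal to $R\wedge S$; the whole proposition is thus a Fubini statement.

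To prove this, I work locally. Cover $\Omega$ by small balls $B$ on each of which $R=\ddc u$ with $u$ plurisubharmonic and bounded; by a partition of unity it is enough to establish the identity on each such $B$. Because $S=\int[D]\,\ds m(D)$ is uniformly woven, the disks $D$ have area bounded by a fixed constant and, by the convention in Definition \ref{def int geom 2}, are closed in $\Omega$; hence for $m$-a.e.\ (in fact every) $D$ the current $R\wedge[D]$ is the well-defined positive measure $\ddc\!\left(u\,[D]\right)$ on $B$, where $u\,[D]$ denotes the current $\varphi\mapsto\int_D u\varphi$ and $u|_D$ is a bounded subharmonic function since $u$ is bounded. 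As $u$ is bounded and the disks have uniformly bounded area, Fubini applies and gives $\int u\,[D]\,\ds m(D)=u\,S$ as currents on $B$. Applying $\ddc$, which commutes with integration against $m$, we obtain
\[
\int R\wedge[D]\,\ds m(D)=\int \ddc\!\left(u\,[D]\right)\,\ds m(D)=\ddc\!\left(\int u\,[D]\,\ds m(D)\right)=\ddc(u\,S)=R\wedge S
\]
on $B$, hence on all of $\Omega$.

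Putting the two steps together, the quantities of Definitions \ref{def int geom} and \ref{def int geom 2} both equal $R\wedge S$, so they coincide; in particular the quantity of Definition \ref{def int geom 2} is independent of the marking $m$ of $S$. The only delicate points are the commutation of $\ddc$ with the integral $\int\cdot\,\ds m(D)$ and the Fubini identity $\int u\,[D]\,\ds m(D)=u\,S$; both rely on the uniform area bound furnished by ``uniformly woven'' and on $u$ being genuinely bounded, so that each restriction $u|_D$ is a bona fide (bounded) subharmonic function and all the $\ddc$'s above are Bedford--Taylor products rather than merely formal symbols. I expect this Fubini/commutation step to be the main obstacle; the rest is routine manipulation with local potentials.
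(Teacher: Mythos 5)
Your argument is correct and amounts to the same identification the paper uses: both quantities are shown to equal the wedge product $R\wedge S$, the first via \cite[Proposition 2.6]{DDG2} and the second via a Fubini/commutation identity for the marking $m$ and a local bounded potential $u$ of $R$. The paper simply cites \cite[Lemma 2.7]{DDG2} for that second step, so your proposal is essentially a self-contained unwinding of the same proof rather than a different route.
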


\begin{proof}
This is a direct consequence of \cite[Lemma 2.7]{DDG2}.
\end{proof}


\subsection{Pesin Theory and  Lyapunov chart}
The classical presentation of Pesin theory uses the assumption that $\log (||Df||)$ is integrable, see \cite{kat:has}. We recall here a more general version of it without this assumption.
\bigskip

Let $\m$ be an invariant probability measure.
Denote by $(\hat{\proj}^2,\hf,\hat{\m})$ the natural extension of $(\proj^2,f,\m)$ which is an invertible dynamical system with the same ergodic properties than $(\proj^2,f,\m)$.
We recall that $\hat{\proj}^2= \lbrace \hx=(x_{-n})\in (\proj^2)^\N \, | \, f(x_{-n})=x_{-n+1} \rbrace$ is the set of histories with the  induced topology of $(\proj^2)^\N$. The projections $\pi_i$  and the measure $\hat{\m}$ are defined by $$\pi_i(\hx)=x_i \text{ and }\hat{\m}(\hat{A})=\lim \m(\pi_n(\hat{A})).$$
For every point $\hx$ the tangent space is define by $T_\hx \hat{\proj}^2= T_{x_0}\proj^2\simeq \C^2$ and we set $D\hf(\hx)=Df(x_0)$, see \cite{deT:expo} or \cite{rong}.
\\

We start with a more general version of Oseledets Theorem.
In the classical statement, we assume that $\log^+ (||D\hf^{\pm 1}||)\in L^1(\hat{\m})$. 
As $f$ is an endomorphism of $\proj^2$, $\log^+ (||D\hf||)\in L^1(\hat{\m})$. It turns out that the hypothesis $\log^+ (||D\hf^{- 1}||)\in L^1(\hat{\m})$ is not needed, see \cite[Appendix A1]{Kos}.

\begin{theoreme}\label{th Oseledets}
Let $f$ be an endomorphism of $\proj^2$ and $\m$ be an invariant  measure ($f_*\m=\m$).
There exist a set $\hX$ such that $\hat{\m}(\hat{\proj}^2\setminus \hX)=0$,  measurable functions (the Lyapunov exponents) $\chi_1\geq\chi_2$ and an invariant measurable  decomposition  of the tangent space such that for all $\hx\in\hX$ 
$$T_\hx \hat{\proj}^2=E_{\chi_1}(\hx)\oplus E_{\chi_2}(\hx)$$
and for all $v\in E_{\chi_i}(\hx) $, if $v\neq 0$ then 
$$\underset{n\rightarrow +\infty}{\lim} \dfrac{1}{n}\log \, ||D\hf^n(\hx)v||=\chi_i(\hx).$$
\end{theoreme}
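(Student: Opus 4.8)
The plan is to deduce everything from the multiplicative ergodic theorem in the ``one-sided'' form, i.e. the form that only needs integrability of $\log^+\|D\hf\|$ and not of $\log^+\|D\hf^{-1}\|$. Set $A(\hx):=Df(x_0)$, a measurable linear cocycle over the \emph{invertible} system $(\hat{\proj}^2,\hf,\hm)$, so that $D\hf^n(\hx)=A(\hf^{n-1}\hx)\circ\cdots\circ A(\hx)$. Since $f$ is holomorphic on the compact manifold $\proj^2$, $\|Df\|$ is bounded, hence $\log^+\|A\|\in L^1(\hm)$; the only classical hypothesis that is missing is $\log^+\|A^{-1}\|\in L^1(\hm)$, which amounts to integrability of $-\log|\Jac f|$ and can genuinely fail (and $A(\hx)$ may not even be invertible, on the set of mass $\m(C)$, $C$ the critical curve). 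So I would run the forward part of Oseledets' theorem with the integrability we have, and then use the invertibility of $\hf$ alone — not of $A$ — to upgrade the resulting \emph{filtration} to a \emph{splitting}.

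First I would apply Ruelle's one-sided multiplicative ergodic theorem (valid as soon as $\log^+\|A\|\in L^1$): the limit $\Lambda(\hx):=\lim_n\big(D\hf^n(\hx)^{*}D\hf^n(\hx)\big)^{1/2n}$ exists $\hm$-a.e., its eigenvalues are $e^{\chi_1(\hx)}\ge e^{\chi_2(\hx)}$ with $\chi_1,\chi_2$ measurable, $\hf$-invariant, valued in $[-\infty,+\infty)$, one has $\frac1n\log|\det D\hf^n(\hx)|\to\chi_1(\hx)+\chi_2(\hx)$, and the eigenspace $V_2(\hx)$ of the smaller eigenvalue (dimension $1$ if $\chi_1(\hx)>\chi_2(\hx)$, and $\{0\}$ otherwise) gives a measurable $\hf$-equivariant filtration with $\frac1n\log\|D\hf^n(\hx)v\|\to\chi_2(\hx)$ for $v\in V_2(\hx)\setminus\{0\}$ and $\to\chi_1(\hx)$ for $v\notin V_2(\hx)$. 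I set $E_{\chi_2}(\hx):=V_2(\hx)$. When $\chi_1(\hx)=\chi_2(\hx)$ — this includes the degenerate superattracting case where $\chi_1\equiv-\infty$ — one has $V_2(\hx)=\{0\}$, I put $E_{\chi_1}(\hx):=T_\hx\hat{\proj}^2$, and the conclusion holds trivially. When $\chi_1(\hx)>\chi_2(\hx)$ I would recover the complementary line $E_{\chi_1}(\hx)$ as the limit in $\proj(T_\hx\hat{\proj}^2)$ of the most-expanded directions of the maps $D\hf^n(\hf^{-n}\hx)$ as $n\to\infty$, and then check: (i) the limit exists a.e., because the spectral gap $\chi_1-\chi_2>0$ together with the tempered (subexponential) behaviour of the Oseledets constants along the orbit makes the angle between the directions of $D\hf^n(\hf^{-n}\hx)$ and $D\hf^{n+1}(\hf^{-n-1}\hx)$ decay summably; (ii) $A(\hx)E_{\chi_1}(\hx)=E_{\chi_1}(\hf\hx)$; (iii) $E_{\chi_1}(\hx)\cap E_{\chi_2}(\hx)=\{0\}$ a.e.; (iv) $\frac1n\log\|D\hf^n(\hx)v\|\to\chi_1(\hx)$ for $v\in E_{\chi_1}(\hx)\setminus\{0\}$, which follows from (iii) and the filtration property. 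Then $T_\hx\hat{\proj}^2=E_{\chi_1}(\hx)\oplus E_{\chi_2}(\hx)$ with the asserted growth rates, and $\hX$ is taken to be the full-measure set on which all the above limits hold.

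The hard part is precisely step (iii)--(iv), i.e. producing the \emph{splitting} rather than just the forward filtration in the absence of integrability of $D\hf^{-1}$: in the usual proof one would simply apply the theorem to the inverse cocycle, which is not available, so one must instead control the relevant angles ``forward along backward orbits'', using only $\log^+\|A\|\in L^1$, the a.e. convergence statements already obtained, and a tempering (Kingman-type) lemma for the non-uniform constants. One must also dispose of the set $\{\,Df(x_0)\text{ not invertible}\,\}$, of mass $\m(C)$: if $\m(C)>0$ then $\chi_1+\chi_2\equiv-\infty$ on ergodic components, so $\chi_2\equiv-\infty$ and the splitting degenerates (one reads $E_{\chi_1}=T_\hx\hat{\proj}^2$); if $\m(C)=0$ the construction above goes through. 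The detailed execution of these tempering estimates is the one carried out in \cite[Appendix A1]{Kos}, to which I would refer for the technical core.
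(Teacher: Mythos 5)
The paper does not actually prove this statement: it is imported from Kozlovski \cite[Appendix A1]{Kos}, and your outline (Ruelle's one-sided multiplicative ergodic theorem, which only needs $\log^+\|Df\|\in L^1$, to get the exponents and the slow filtration $V_2$, then a limit of most-expanded directions along backward orbits of the \emph{invertible} natural extension to produce the invariant fast line, with tempering estimates replacing the missing integrability of $Df^{-1}$) is the standard route to such a result; you defer the technical core to the same reference, so in spirit you are aligned with the paper.

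There is, however, a concrete error in your case analysis, and it concerns precisely the situation this refinement of Oseledets is designed for (the remark following the statement stresses that the novelty is that exponents may equal $-\infty$). You claim that if $\m(C)>0$ then $\chi_1+\chi_2\equiv-\infty$, hence $\chi_2\equiv-\infty$, and that ``the splitting degenerates (one reads $E_{\chi_1}=T_\hx\hat{\proj}^2$)''. This is false whenever $\chi_1>-\infty=\chi_2$: in that case $V_2(\hx)$ is a genuine line whose nonzero vectors have growth rate $-\infty$, it cannot be absorbed into $E_{\chi_1}$ without violating the exact-rate conclusion, and the theorem still demands a nontrivial invariant splitting $E_{\chi_1}\oplus E_{\chi_2}$; note this also contradicts your own earlier convention, where $E_{\chi_1}=T_\hx\hat{\proj}^2$ was reserved for the case $\chi_1=\chi_2$. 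A concrete instance is $[x:y:z]\mapsto[x^2:y^2:z^2]$ with $\m=\delta_p$, $p=[1:1:0]$ a fixed point on the critical curve: $Df_p$ has rank one, $\chi_1=\log 2$, $\chi_2=-\infty$, and $E_{\chi_2}$ must be the kernel direction. The same phenomenon occurs with $\m(C)=0$ but $\log|\Jac f|\notin L^1(\m)$, so the dichotomy ``$\m(C)=0$: run the construction / $\m(C)>0$: degenerate reading'' leaves the theorem unproved for a nonempty class of invariant measures (and the saddle measures used later in the paper are allowed to have stable exponent $-\infty$). The fix is not to wave this case away but to run your backward-limit construction also when $\chi_2=-\infty$: the infinite gap only helps the angle estimates, but the tempering statements must then be formulated without subtracting $\chi_2$ — which is exactly what \cite[Appendix A1]{Kos} carries out.
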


The only difference without the assumption $\log (||D\hf^{- 1}||)\in L^1(\m)$, is that the Lyapunov exponents may be equal to $-\infty$.

\begin{rmq}
If $\m$ is ergodic then the  Lyapunov exponents are constant.
\end{rmq}

There also exists a slightly different version of the theorem of $\gamma$-reduction of Pesin, without the assumption $\log^+||(D\hf)^{-1}||\in L^1(\hat{\m})$.

\begin{theoreme}\label{Pesin faible}
Let $f$ be an endomorphism of $\proj^2$ and $\m$ be an  invariant measure ($f_*\m=\m$). Assume that the Lyapunov exponents of $\mu$ satisfies $\chi_u>0 >\chi_s$. 
Denote by 
\begin{center}
$E^u (\hx)= E_{\chi_u}(\hx)$, and $E^s (\hx)=E_{\chi_u}(\hx)$
\end{center}
For all small enough $\gamma,\varepsilon_0>0$, there exist a subset $\hY\subset \hat{\proj}^2$ of full measure and   $\gamma$-moderate functions
  $C_\gamma: \hY \rightarrow GL_2(\C)$ 
 and $\delta: \hY \rightarrow \R $ such that for almost every $\hx\in\hY$,
\begin{enumerate} 
\item  $C_\gamma (\hx)$ maps $\C^{\dim E^u(\hx)} \oplus \C^{\dim E^s(\hx)} $ on the decomposition $E^u(\hx) \oplus E^s(\hx)$,
\item  $g_\hx(w)=Exp_{\gamma,\hf(\hx)}^{-1}\circ f_\hx \circ Exp_{\gamma,\hx}(w)$ is well defined on $B(0,\delta(\hx))$, where
$$Exp_{\gamma,\hx}=exp_\hx\circ C_\gamma (\hx),$$
\item $g_\hx(0)=0$,
\item $Dg_\hx(0)= \begin{pmatrix}
   A^u_\gamma(\hx) &0 \\
   0 & A^s_\gamma(\hx) 
\end{pmatrix}$ 
\bigskip
where  $A^u_\gamma$ and $A^s_\gamma$ are functions such that we have
\begin{center}
$e^{\chi_u(\hx)-\gamma}\leq |A^u_\gamma(\hx)| \leq e^{\chi_u(\hx)+\gamma}$,
\medskip

 and $|A^s_\gamma(\hx)|\leq e^{\alpha}$ for all $\chi_s(\hx)+\gamma<\alpha<0$,
\end{center}
\item if we denote
$g_\hx(w)= D g_\hx(0)+h (w)$ then, for all $||w||\leq \delta(\hx) $, we have
\begin{center}
$||Dh_\pm(w)||\leq \varepsilon_0 $, so $||h_\pm (w)||\leq \varepsilon_0 ||w||. $
\end{center}
\end{enumerate}
\end{theoreme}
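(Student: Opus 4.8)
The plan is to run the classical construction of a Lyapunov metric and of adapted (Pesin) charts, as in \cite{kat:has}, but reorganised so that the contracting block is built using only \emph{forward} iterates of $D\hf$: this is precisely the step where the usual proof invokes $\log^{+}\|(D\hf)^{-1}\|\in L^{1}(\hm)$, and the point is that after this reorganisation that hypothesis is dispensable. Throughout one uses that $\proj^{2}$ is compact and $f$ holomorphic, so that in Fubini--Study charts $\|Df\|\le M$ and $\|D^{2}f\|\le M$ for a uniform $M$; in particular $\log^{+}\|D\hf\|$ is bounded, hence in $L^{1}(\hm)$. First I would apply Theorem \ref{th Oseledets} to $\m$ (passing to ergodic components so that the exponents are constant): on an $\hf$-invariant set of full measure one gets $T_{\hx}\hat{\proj}^{2}=E^{u}(\hx)\oplus E^{s}(\hx)$, one-dimensional factors since $\m$ is of saddle type, exponents $\chi_{u}>0>\chi_{s}$ (we may take $\chi_{s}$ finite, the case relevant to hyperbolic measures; if $\chi_{s}=-\infty$ the stable step is only simpler), together with the two standard regularity facts $\frac1n\log\|D\hf^{n}(\hx)|_{E^{u}}\|\to\chi_{u}$ and $\frac1n\log\sin\angle\big(E^{u}(\hf^{n}\hx),E^{s}(\hf^{n}\hx)\big)\to0$ $\hm$-a.e. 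Fix $\eps_{0}>0$ and $\gamma>0$ small.

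Next I would construct the Lyapunov norms, running the construction with a small auxiliary parameter $\gamma'\in(0,\gamma)$. Any nonzero $v\in E^{u}(\hx)$ has $\|D\hf^{n}(\hx)v\|\to\infty$, so it is not killed by $Df(x_{0})$; hence $D\hf(\hx)$ restricts to an isomorphism $E^{u}(\hx)\to E^{u}(\hf\hx)$, and, $E^{u}$ being one-dimensional, $\|D\hf^{n}(\hx)|_{E^{u}}\|=\prod_{k<n}\|D\hf(\hf^{k}\hx)|_{E^{u}}\|$. Combined with $\frac1n\log\|D\hf^{n}(\hx)|_{E^{u}}\|\to\chi_{u}$, $\log^{+}\|D\hf|_{E^{u}}\|\le\log M$ and Birkhoff's theorem, this forces $\log\|D\hf|_{E^{u}}\|\in L^{1}(\hm)$, so on $E^{u}$ one is in the integrable regime: $(D\hf|_{E^{u}})^{-1}$ is defined $\hm$-a.e.\ and one may use the two-sided norm
$$\|v\|^{2}_{\hx,u}=\sum_{k\ge0}\big\|(D\hf|_{E^{u}})^{-k}(\hx)\,v\big\|^{2}e^{2k(\chi_{u}-\gamma')}+\sum_{k\ge1}\big\|D\hf^{k}(\hx)\,v\big\|^{2}e^{-2k(\chi_{u}+\gamma')},\qquad v\in E^{u}(\hx),$$
both series converging. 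On $E^{s}(\hx)$ one uses the one-sided \emph{forward} norm
$$\|v\|^{2}_{\hx,s}=\sum_{k\ge0}\big\|D\hf^{k}(\hx)\,v\big\|^{2}e^{-2k\alpha},\qquad v\in E^{s}(\hx),$$
for a fixed $\alpha\in(\chi_{s}+\gamma',0)$ chosen close to $\chi_{s}$: this converges (summand $\sim e^{2k(\chi_{s}-\alpha)}$) and never refers to $(D\hf)^{-1}$ — this is the only place the classical proof used $\log^{+}\|(D\hf)^{-1}\|\in L^{1}$. An index shift $k\mapsto k-1$, the Euclidean terms cancelling in the unstable case, then gives $e^{\chi_{u}-\gamma'}\le\|D\hf(\hx)v\|_{\hf\hx,u}/\|v\|_{\hx,u}\le e^{\chi_{u}+\gamma'}$ for $v\in E^{u}$ and $\|D\hf(\hx)v\|_{\hf\hx,s}/\|v\|_{\hx,s}\le e^{\alpha}$ for $v\in E^{s}$.

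Then one produces the charts. Choose $\hm$-measurably Lyapunov-unit vectors $e^{u}(\hx)\in E^{u}(\hx)$, $e^{s}(\hx)\in E^{s}(\hx)$, and let $C_{\gamma}(\hx)\in GL_{2}(\C)$ send the standard basis of $\C^{\dim E^{u}(\hx)}\oplus\C^{\dim E^{s}(\hx)}$ to $(e^{u}(\hx),e^{s}(\hx))$; then $Dg_{\hx}(0)=C_{\gamma}(\hf\hx)^{-1}Df(x_{0})C_{\gamma}(\hx)=\mathrm{diag}(A^{u}_{\gamma}(\hx),A^{s}_{\gamma}(\hx))$ with the eigenvalue bounds just obtained, which gives $(1)$, $(3)$ and $(4)$. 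Since $\|v\|_{\hx,\bullet}\ge\|v\|$ one has $\|C_{\gamma}(\hx)\|\le\sqrt2$, while $\|C_{\gamma}(\hx)^{-1}\|$ is controlled by the comparison functions of the two Lyapunov norms and by $1/\sin\angle(E^{u}(\hx),E^{s}(\hx))$; using the norm-growth estimates, the bound $\|Df\|\le M$ and $\log\|D\hf|_{E^{u}}\|\in L^{1}(\hm)$, these comparison functions vary subexponentially along $\hm$-a.e.\ orbit (this is the standard estimate, here playing the role of $\log^{+}\|(D\hf)^{-1}\|\in L^{1}$), and $1/\sin\angle(E^{u},E^{s})$ is tempered by the Oseledets regularity, so after passing to $\gamma$-moderate majorants in the usual way one may take $C_{\gamma}$ $\gamma$-moderate. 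Finally set $\Exp_{\gamma,\hx}=\exp_{x_{0}}\circ C_{\gamma}(\hx)$; the $C^{2}$-bound on $f$ and the control of $\|C_{\gamma}^{\pm 1}\|$ yield a quadratic estimate $\|Dh(w)\|\le K(\hx)\|w\|$ for the remainder $h$ of $g_{\hx}=\Exp_{\gamma,\hf\hx}^{-1}\circ f_{\hx}\circ\Exp_{\gamma,\hx}$ on a ball of radius a fixed multiple of $\|C_{\gamma}(\hx)\|^{-1}$, so choosing $\delta(\hx)$ $\gamma$-moderate with $K(\hx)\delta(\hx)\le\eps_{0}$ gives $(2)$ and $(5)$.

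The main obstacle, conceptually, is to isolate where $\log^{+}\|(D\hf)^{-1}\|\in L^{1}$ enters the classical proof and to verify it is removable — which reduces to (i) phrasing everything about the contracting direction with forward iterates only, and (ii) extracting the automatic integrability of $\log\|D\hf|_{E^{u}}\|$ from Oseledets and Birkhoff. The remaining difficulty is the (routine but lengthy) temperedness bookkeeping for $C_{\gamma}$ and $\delta$, together with some care near the critical set of $f$ and when $\chi_{s}=-\infty$; for all of this I would follow \cite[Appendix A1]{Kos} and \cite{deT:expo,rong}.
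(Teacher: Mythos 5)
Your proposal is essentially correct, but it takes a different route from the paper: the paper does not reprove the theorem at all, it simply invokes \cite[Theorem 2.3]{newhouse} together with Theorem 6.1 and Lemma 6.2 of \cite{dupont:entropy}, giving only the dictionary $\gamma=\varepsilon$, $\delta(\hx)=\varepsilon_0(\psi_\varepsilon(\hf(\hx)))^{-1}$, whereas you rebuild the Lyapunov charts from scratch along the lines of \cite{kat:has}. Your reconstruction correctly isolates the two points that make the hypothesis $\log^+\|(D\hf)^{-1}\|\in L^1(\hm)$ removable: the stable block of the Lyapunov metric can be defined by a purely forward series $\sum_{k\ge0}\|D\hf^k(\hx)v\|^2e^{-2k\alpha}$ (so $(Df)^{-1}$ never appears), and along the one-dimensional unstable direction $\log\|D\hf|_{E^u}\|$ is automatically integrable by Birkhoff plus the finiteness of $\chi_u$, so the two-sided norm and the lower bound $e^{\chi_u-\gamma}\le|A^u_\gamma|$ are available; this is exactly the mechanism underlying the cited proofs, which work in the natural extension as you do. What the citation buys the paper is brevity and, more importantly, the explicit tempered function $\psi_\varepsilon$ of \cite{dupont:entropy}, which is reused verbatim to define $\delta(\hx)$; what your argument buys is self-containedness and a clear explanation of where invertibility would have been used (and hence why $g_\hx$ need not be injective, consistent with the remark following the theorem). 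Two small caveats: to get $|A^s_\gamma(\hx)|\le e^{\alpha}$ simultaneously for all $\chi_s+\gamma<\alpha<0$ you should build the stable norm with a fixed exponent $\alpha_0\in(\chi_s,\chi_s+\gamma)$ (and the case $\chi_s=-\infty$, which the statement formally allows, deserves a separate word rather than ``only simpler''); and the $\gamma$-moderateness of $\|C_\gamma^{\pm1}\|$ and $\delta$ (comparison of the Lyapunov and Fubini--Study norms, control of the angle between $E^u$ and $E^s$) is asserted rather than carried out --- that bookkeeping is precisely the content of the $\psi_\varepsilon$ estimates in \cite{dupont:entropy} and of \cite[Appendix A1]{Kos}, so deferring to them there is legitimate, but it is the one place where your sketch leans on the same sources the paper cites.
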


\begin{proof}
See \cite[Theorem 2.3]{newhouse}, Theorem 6.1, and Lemma 6.2 in \cite{dupont:entropy} with the notations $\gamma=\varepsilon$ et $\delta(\hx) =\varepsilon_0 (\psi_\varepsilon(\hf(\hx)))^{-1}=r$.
\end{proof}

\begin{defif}
We call  \textbf{horizontal} graph (resp. \textbf{vertical} graph), in $ Exp_{\gamma,\hx} B(\delta(\hx))$, the image under $Exp_{\gamma,\hx}$ of a  graph above $B_1(0,\delta(\hx))$ (resp. $ B_2(0,\delta(\hx))$).
\end{defif}

\begin{prop}\label{Pesin donne H like}
Assume that for $\m$-almost every $\hx$, the Lyapunov exponents satisfy $\chi_u(\hx)> 0 > \chi_s(\hx)$.
Denote by $B_u(0,r), B_s(0,r)$ the open balls of centre $0$ and radius $r$ in $E^u$ and $E^s$, 
and denote  $B(r)=B_u(0,r)\times B_s(0,r)$.

Then, for all $0< r $,  $g_\hx: B(\rho(\hx))\rightarrow B\left(\rho(\hf(\hx))\right)$, where $\rho(\hx)=\min(r,\delta(\hx))$, is a horizontal like map of degree 1 and is injective on the restriction of every   cut-off horizontal graph, i.e. a horizontal  graph in $B(\rho(\hx))\bigcap g_\hx^{-1} \left( B\left(\rho(\hf(\hx))\right) \right)$.
\end{prop}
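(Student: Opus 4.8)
The plan is to show that after the $\gamma$-reduction, the local map $g_\hx$ behaves like a "horizontal-like map of degree 1" in the sense of Hubbard–Oberste-Vorth / cross-map theory, using only the facts recorded in Theorem \ref{Pesin faible}: namely that $Dg_\hx(0)$ is diagonal with an expanding block $A^u_\gamma$ (of modulus $\geq e^{\chi_u - \gamma} > 1$) and a contracting block $A^s_\gamma$ (of modulus $< 1$), and that the nonlinear part $h$ has derivative bounded by $\varepsilon_0$ on the ball of radius $\delta(\hx)$. First I would fix $r > 0$, set $\rho(\hx) = \min(r, \delta(\hx))$, and observe that since $\rho(\hx) \leq \delta(\hx)$ the estimates of Theorem \ref{Pesin faible}(5) apply on all of $B(\rho(\hx))$. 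The key quantitative point is to choose $\gamma$ and $\varepsilon_0$ small (this is the "for all small enough $\gamma, \varepsilon_0$" clause we are allowed to invoke): concretely, small enough that $e^{\chi_u - \gamma} - \varepsilon_0 > 1 + e^{\alpha} + \varepsilon_0$ for a suitable $\alpha < 0$, so that the cone-stability and expansion/contraction inequalities below hold with room to spare.

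The main steps are the following. (1) \emph{Cone invariance.} Writing $g_\hx = (g^u_\hx, g^s_\hx)$ in the splitting $E^u \oplus E^s$, I would introduce the standard horizontal and vertical cones and check, via the mean-value inequality together with $\|Dh(w)\| \leq \varepsilon_0$, that $Dg_\hx(w)$ maps the horizontal cone strictly into itself while $Dg_\hx(w)^{-1}$ maps the vertical cone into itself; the $\varepsilon_0$-term perturbs the diagonal action of $Dg_\hx(0)$ by an amount smaller than the spectral gap. (2) \emph{Horizontal graphs go to horizontal graphs.} A cut-off horizontal graph $\Gamma$ in $B(\rho(\hx)) \cap g_\hx^{-1}(B(\rho(\hf\hx)))$ has tangent spaces in the horizontal cone; by (1) its image under $g_\hx$ again has tangent spaces in the horizontal cone, hence is a graph over (a subset of) $B_u(0, \rho(\hf\hx))$. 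Moreover the expansion of the $E^u$-direction (factor $\geq e^{\chi_u-\gamma} - \varepsilon_0 > 1$) forces this image graph to stretch across the full width $B_u(0,\rho(\hf\hx))$, while the contraction in $E^s$ (factor $\leq e^\alpha + \varepsilon_0 < 1$) keeps it inside the vertical ball — this is exactly the horizontal-like condition, and the degree is $1$ because $g^u_\hx$ restricted to such a graph is a composition of the linear expansion and an $\varepsilon_0$-small perturbation, hence a homeomorphism onto its image. (3) \emph{Injectivity.} On a single horizontal graph, the displacement estimate: if $w, w'$ lie on a cut-off horizontal graph then $w - w'$ is in the horizontal cone, and $\|g^u_\hx(w) - g^u_\hx(w')\| \geq (e^{\chi_u - \gamma} - \varepsilon_0)\|w^u - w'^u\| > 0$ whenever $w \neq w'$; so $g_\hx$ is injective on every cut-off horizontal graph.

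I expect the only real work to be bookkeeping in step (1)–(2): writing the cone conditions with explicit slopes and verifying that the choice of $\gamma, \varepsilon_0$ makes the inequalities $e^{\chi_u-\gamma} - \varepsilon_0 > 1$ and $e^{\alpha} + \varepsilon_0 < 1$ compatible with cone invariance. The subtlety worth flagging is that $\chi_s(\hx)$ (and possibly $\delta$) is only a measurable function of $\hx$, not a constant, so the slope/gap constants must be taken uniform in $\hx$; this is precisely what the $\gamma$-moderate control on $C_\gamma$ and $\delta$ in Theorem \ref{Pesin faible} provides, via the bound $|A^s_\gamma(\hx)| \leq e^\alpha$ that holds for every $\alpha \in (\chi_s(\hx)+\gamma, 0)$ — one simply fixes one such $\alpha$ close to $0$, say $\alpha = -\gamma$, uniformly. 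No genuinely hard estimate is needed beyond the classical graph-transform argument; the statement is essentially the assertion that a Pesin block, read in Lyapunov charts, is a crossed mapping of degree one, and the proof is the corresponding one-line cone computation dressed up with the $\varepsilon_0$-perturbation coming from Theorem \ref{Pesin faible}(5).
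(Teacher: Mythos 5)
Your overall strategy --- exploit the diagonal form of $Dg_\hx(0)$ together with the bound $\|Dh(w)\|\le\varepsilon_0$ from Theorem \ref{Pesin faible}, plus the $\gamma$-moderate comparison of chart sizes --- is the same as the paper's, and your treatment of the horizontal-like property (the expansion in $E^u$ pushes the image across $B_u(0,\rho(\hf(\hx)))$ while the contraction keeps the $E^s$-component inside) matches the first half of the paper's proof. The genuine gap is in your steps (1)--(3): you assert that a cut-off horizontal graph has its tangent spaces, and its chords $w-w'$, in a fixed horizontal cone. In this paper a horizontal graph is \emph{any} holomorphic graph $\varphi:B_u(0,\rho(\hx))\to B_s(0,\rho(\hx))$, with no slope restriction; near $\partial B_u$ the derivative of $\varphi$ can be arbitrarily large (for instance $\varphi(z)=\rho\,(z/\rho)^n$ has $|\varphi'|=n$ on the boundary circle), so no cone of fixed aperture contains all tangent directions of such graphs. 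Nor can you simply widen the cone: with $\varepsilon_0$ fixed once and for all by Theorem \ref{Pesin faible}, a cone of slope $K$ is invariant under $Dg_\hx(w)$ only for $K$ up to roughly $e^{\chi_u-\gamma}/\varepsilon_0$, so cone invariance genuinely fails for steep graphs. Consequently your displacement estimate $\|g^u_\hx(w)-g^u_\hx(w')\|\ge (e^{\chi_u-\gamma}-\varepsilon_0)\,\|w_u-w'_u\|$, and with it both injectivity and the degree-one claim, are not justified as stated.

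What is missing is exactly the step the paper inserts before its displacement estimate: by the unstable expansion, the part of the graph lying over the annulus $B_u(0,\rho(\hx))\setminus B_u(0,r')$, with $r'=\rho(\hx)(\varepsilon_0+e^\gamma)e^{-\chi_u+\gamma}$, is mapped outside $B(\rho(\hf(\hx)))$, so the cut-off graph lives over the strictly smaller disk $B_u(0,r')$; there the Cauchy inequalities give the uniform Lipschitz bound $|\varphi(z)-\varphi(z')|\le \frac{1}{c}|z-z'|$ with $c=1-(\varepsilon_0+e^\gamma)e^{-\chi_u+\gamma}$, and only after this does the estimate $\bigl|p_u(g_\hx(w))-p_u(g_\hx(w'))\bigr|\ge \bigl(e^{\chi_u-\gamma}-\tfrac{2}{c}\varepsilon_0\bigr)|z-z'|>0$ (for $\gamma,\varepsilon_0$ small) yield injectivity on cut-off horizontal graphs and degree $1$. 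If you add this reduction to the smaller disk together with the Cauchy/Koebe-type slope control, your argument becomes essentially the paper's; without it, the cone computation alone does not cover the graphs the proposition is about.
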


\begin{proof}
Let  $0< r \leq \delta(\hx)$, up to divided $\delta(\hx)$ by a  constant, we may assume that Theorem \ref{Pesin faible} is true in $B(r)$.
Thus $g_\hx(0)=0$, $Dg_\hx(0)$ is a diagonal matrix and for all $w\in B(\delta(\hx))$ we have $$||Dg_\hx(w)-Dg_\hx(0)||\leq \varepsilon_0.$$

As $\chi_u>0$ and $\chi_s<0$, up to reduce $\delta(\hx)$ and $\gamma$, we may assume that
\begin{center}
$e^{\chi_u -\gamma}-\varepsilon_0>e^{\gamma}>1$ and $e^{\chi_s +\gamma}+\varepsilon_0<e^{- \gamma}<1$.
\end{center}

Denote by $p_{u}$ the projection on $E^u$ and $h(w)=g_\hx(w)-Dg_\hx(0)$ so for every $w\in B(r)$
$$
|p_u( g_\hx (w) )|  \geq ||Dg_\hx \left( p_u(w) \right)|| - ||h(w)||
  \geq e^{\chi_u - \gamma}|p_u(w)| - \varepsilon_0 r
  \geq e^\gamma r\\
$$
and 
$$
| g_\hx (w) |  \leq ||Dg_\hx (w)|| + ||h(w)||
  \leq e^{\chi_s + \gamma}r + \varepsilon_0 r
  \leq e^{- \gamma} r.
$$
If $\rho(\hx)=\min(r,\delta(\hx))$ then
 $$e^{\gamma} \rho(\hx)\geq \rho (\hf(\hx)) \geq e^{-\gamma} \rho(\hx),$$
and by the preceding facts, $g_\hx: B(\rho(\hx))\rightarrow B(\rho(\hf(\hx)))$ is a horizontal map.

Let $\Gamma$ be a  horizontal graph, i.e. $\Gamma=\lbrace (z,\varphi(z))\rbrace$ where $\varphi: B_u(\rho(\hx))\rightarrow B_s(\rho(\hx))$ is a holomorphic function.
We set $$c=1-(\varepsilon_0+e^\gamma)e^{-\chi_u+\gamma} \text{ and } r'=\rho(\hx) (\varepsilon_0+e^\gamma)e^{-\chi_u+\gamma} $$ so, thanks to the  Cauchy inequalities,  for all $z,z'\in B^u(r')$ we have $|\varphi(z) - \varphi(z')|\leq \frac{1}{c} |z-z'|$. 
 The image by $f$ of the part of the graph above $B_u(\rho(\hx)) \setminus B_u(r')$ is outside $B(\rho(\hf(\hx))$, thus we are interested only in the part of the graph above $ B_u(r')$ . 

Let $w,w' \in \Gamma$ and $z,z'\in  B_u(r')$ such that $ w=(z,\varphi(z))$,  $w'=(z',\varphi(z')) $. Since $Dg_\hx(0)$ is diagonal, we have :
$$||h(w)-h(w')||\leq 2\varepsilon_0 \max (|z-z'|,|\varphi(z)-\varphi(z')|) \leq \frac{2}{c}\varepsilon_0 \, |z-z'|$$
and
\begin{align*}
|p(g_\hx(w))-p(g_\hx(w'))| & \geq ||Dg_\hx(0)(z-z',0)|| - ||h(w)-h(w')|| \\
 & \geq \left(e^{\chi_u-\gamma}-\frac{2}{c} \varepsilon_0 \right) \, |z-z'|\\
\end{align*}

And if $\gamma$ and $\varepsilon_0$ are small enough
$$
e^{\chi_u-\gamma}-\dfrac{2}{c} \varepsilon_0 = e^{\chi_u-\gamma}- \dfrac{2\varepsilon_0}{1-(\varepsilon_0+e^\gamma)e^{-\chi_u+\gamma}}
 = \dfrac{e^{\chi_u-\gamma} - e^\gamma - 3 \varepsilon_0}{1-(\varepsilon_0+e^\gamma)e^{-\chi_u+\gamma}}
 >0.$$
Thereby, up to reduce $\delta(\hx)$, $$g_\hx: B(\delta(\hx))\cap g^{-1}( B(\delta(\hf(\hx))))\rightarrow B(\delta(\hf(\hx)))$$ is of  degree 1 and injective on every cut-off horizontal graph.
\end{proof}

\begin{rmk}
By the usual Pesin $\gamma$-reduction theorem, see Propositions 9 and 10 of \cite{deT:expo}, if $\log^+ ||(D\hf)^{-1}||\in L^1(\hat{\m})$ then $g_\hx $ is injective in all $B(\rho(\hx))$.
\end{rmk}

\begin{defif}
For $n\geq 1$, the \textbf{ graph transform} map
$$f_n : \underset{i=0,\cdots,n+1}{\bigcap} f^{-i}\left(  Exp_{\gamma,\hf^i(\hx)}  B(\delta(\hf^i(\hx))) \right) \rightarrow  Exp_{\gamma,\hf^{n+1}(\hx)} B(\delta(\hf^{n+1}(\hx)))$$ 
is the composition of $n $ cut-off maps 
$f|_{ Exp_{\gamma,\hf^i(\hx)}  B(\delta(\hf^i(\hx)))}$
where at each step we cut-off the image of $Exp_{\gamma,\hf^i(\hx)} B(\delta(\hf^i(\hx)))$ to $ Exp_{\gamma,\hf^{i+1}(\hx)} B(\delta(\hf^{i+1}(\hx)))$.
\end{defif}

\begin{prop}
For every $n\geq 1$, the preimage under $f_n$ of a  vertical graph is a  vertical graph and the image by $f_n$ of a  horizontal graph is a  horizontal graph.

Moreover, one every  horizontal graph the inverse maps $f_{-n}$ of $f_n$ is well defined.
\end{prop}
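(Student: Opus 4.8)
The plan is to argue by induction on $n$, reducing the whole statement to the one-step case, which is essentially the content of Proposition~\ref{Pesin donne H like}, and then to propagate the three properties (image of a horizontal graph, preimage of a vertical graph, injectivity on horizontal graphs) through the composition defining $f_n$. So first I would write $f_n$ as a composition $F_m\circ\cdots\circ F_1$ of the one-step cut-off maps it is built from; read in the Lyapunov charts, each $F_j$ is exactly the map $g_{\hf^{j-1}(\hx)}$ of Proposition~\ref{Pesin donne H like}, restricted to $B(\rho(\hf^{j-1}(\hx)))\cap g_{\hf^{j-1}(\hx)}^{-1}\big(B(\rho(\hf^{j}(\hx)))\big)$, hence a horizontal-like map of degree $1$ that is injective on cut-off horizontal graphs.

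Next I would establish the one-step statement, using only the estimates already proved inside Proposition~\ref{Pesin donne H like}. For a horizontal graph $\Gamma=\{(z,\varphi(z))\}$, the map $z\mapsto p_u\big(g_\hx(z,\varphi(z))\big)$ is injective by the inequality
$$\big|p_u(g_\hx(w))-p_u(g_\hx(w'))\big|\geq\Big(e^{\chi_u-\gamma}-\tfrac{2}{c}\varepsilon_0\Big)|z-z'|,$$
and its image covers $B_u\big(\rho(\hf(\hx))\big)$ by the expansion estimate $|p_u(g_\hx(w))|\geq e^{\gamma}\rho(\hx)\geq\rho(\hf(\hx))$ on the boundary, while the $E^s$-component stays inside $B_s\big(\rho(\hf(\hx))\big)$ by the contraction estimate $|g_\hx(w)|\leq e^{-\gamma}\rho(\hx)$; the Cauchy estimates of that proof then give the new graph function a slope $<1$, so the cut-off of $g_\hx(\Gamma)$ is again a genuine horizontal graph. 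Dually, for a vertical graph $V=\{(\psi(w),w)\}$ with $\psi$ a contraction, for each fixed $E^s$-coordinate $t$ the holomorphic function $z\mapsto p_u(g_\hx(z,t))-\psi\big(p_s(g_\hx(z,t))\big)$ is the sum of an injective expanding term and a contracting term, hence is injective and has exactly one zero $z=\zeta(t)$; by the implicit function theorem $\zeta$ is holomorphic with $|\zeta'|<1$, so $g_\hx^{-1}(V)=\{(\zeta(t),t)\}$ is a vertical graph. This is the case $n=1$. The inductive step is then formal: given a horizontal graph $\Gamma$ in the source chart, apply the one-step result successively to $F_1(\Gamma),\,F_2(F_1(\Gamma)),\dots$ (always after the cut-off) to see that $f_n(\Gamma)$ is a horizontal graph; given a vertical graph $V$ in the target chart, pull it back through $F_m,F_{m-1},\dots,F_1$ to see that $f_n^{-1}(V)$ is a vertical graph; and since each $F_j$ is injective on cut-off horizontal graphs and maps them onto horizontal graphs, the composition $f_n$ is injective on every cut-off horizontal graph, so $f_n|_\Gamma$ is a biholomorphism onto the horizontal graph $f_n(\Gamma)$ and $f_{-n}:=(f_n|_\Gamma)^{-1}$ is well defined on every horizontal graph.

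The hard part is not any single estimate — all of them are already available — but the bookkeeping with the cut-offs: at each step one must check that the portion of the horizontal graph discarded by the cut-off is exactly the portion whose $p_u$-image leaves $B_u(\rho(\hf^j(\hx)))$, so that what survives is still a full graph over $B_u(\rho(\hf^j(\hx)))$, and symmetrically that, when pulling back a vertical graph, the zero $\zeta(t)$ always lies in the ball under consideration. Both facts follow from the uniform bounds $e^{\chi_u-\gamma}-\varepsilon_0>e^{\gamma}>1$ and $e^{\chi_s+\gamma}+\varepsilon_0<e^{-\gamma}<1$ and the compatibility of radii $e^{\gamma}\rho(\hx)\geq\rho(\hf(\hx))\geq e^{-\gamma}\rho(\hx)$ recalled in the proof of Proposition~\ref{Pesin donne H like}; once this is organised carefully the induction runs without further difficulty.
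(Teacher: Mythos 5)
Your proposal is correct and follows essentially the same route as the paper, which simply observes that the statement is a direct consequence of Proposition \ref{Pesin donne H like}: you reduce everything to the one-step horizontal-like, degree-one, injective-on-cut-off-graphs property and then compose. The extra detail you supply (the vertical-preimage argument and the cut-off bookkeeping) is exactly the content the paper leaves implicit, so there is no divergence in method.
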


\begin{proof}
This is a direct consequence of the preceding proposition.
\end{proof}

\begin{prop}
Assume that for $\m$-almost every $\hx$, the Lyapunov exponent satisfy $\chi_u(\hx)> 0 > \chi_s(\hx)$.
Then for every $\hx \in \hY$, there exist a  unique local stable manifold $W^s_{loc}(\hx)$ and a unique local unstable manifold $W^u_{loc}(\hx)$  which are respectively  vertical and horizontal graph of $Exp_{\gamma,\hx} B(\delta(\hx))$.
Moreover, there exists a constant $c>0$ such that
\begin{align*}
& \text{for all } \, y\in W^s_{loc}(\hx), \, d(f^n(y),f^n(x_0))\leq c e^{\chi_s n}\\
& \text{for all } \, y\in W^u_{loc}(\hx), \, d(f_{-n}(y),x_{-n})\leq c e^{-\chi_u n}
\end{align*}
\end{prop}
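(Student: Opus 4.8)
\emph{Strategy and existence.} The proof is the classical construction of local invariant manifolds by the graph transform, adapted to the non‑integrable setting: $W^u_{loc}(\hx)$ will be the unique attracting fixed point of the forward graph transform run along the history $\hx$, and $W^s_{loc}(\hx)$ the dual object for vertical graphs; the exponential estimates are then read off in the Pesin charts and transported back to $\proj^2$. Fix $\hx\in\hY$ and, after shrinking $\delta$ and $\gamma$ as allowed by Proposition \ref{Pesin donne H like}, let for $k\ge 0$ the set $\mathcal{H}_k$ be the space of horizontal graphs in the chart $Exp_{\gamma,\hf^{-k}(\hx)}B(\delta(\hf^{-k}(\hx)))$ whose graphing function has Lipschitz constant bounded by the fixed constant $\theta=1/c$ appearing in the proof of Proposition \ref{Pesin donne H like}, endowed with the uniform distance between graphing functions; this is a complete metric space, and the "$\theta$‑Lipschitz" condition is preserved by the graph transform (this is exactly the Cauchy‑inequality argument in that proof). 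By Proposition \ref{Pesin donne H like} and the subsequent propositions, the graph transform sends $\mathcal{H}_{k+1}$ into $\mathcal{H}_k$; moreover the estimates in the proof of Proposition \ref{Pesin donne H like}, namely that the $u$‑component of $g$ expands distances along a horizontal graph by a factor $\ge e^{\chi_u-\gamma}-\tfrac{2}{c}\varepsilon_0>1$ while the nonlinear remainder $h$ is $\tfrac{2}{c}\varepsilon_0$‑Lipschitz, show — by comparing two transformed graphs over the same point of the new base — that $\mathcal{H}_{k+1}\to\mathcal{H}_k$ is a contraction with a factor $\lambda=\lambda(\gamma,\varepsilon_0)<1$. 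Letting $F_n\colon\mathcal{H}_n\to\mathcal{H}_0$ be the composition of $n$ such transforms and choosing any initial $\Gamma^{(n)}\in\mathcal{H}_n$ (e.g. $E^u(\hf^{-n}(\hx))=\{(z,0)\}$), the graphs $F_n(\Gamma^{(n)})$ form a Cauchy sequence in $\mathcal{H}_0$ — two of them agree after the last $n$ transforms up to $\lambda^n\cdot\mathrm{const}$ — whose limit $W^u_{loc}(\hx)$ is a horizontal graph independent of $\Gamma^{(n)}$. The symmetric argument, using that preimages of vertical graphs under $f_n$ are vertical graphs and that this operation contracts by $\le e^{\chi_s+\gamma}<1$, produces $W^s_{loc}(\hx)$.

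\emph{Invariance and decay.} Passing to the limit at $\hf^{-1}(\hx)$ gives $f_{-1}\big(W^u_{loc}(\hx)\big)=W^u_{loc}\big(\hf^{-1}(\hx)\big)$, so for $y\in W^u_{loc}(\hx)$ the iterate $f_{-n}(y)$ lies in the chart at $\hf^{-n}(\hx)$ for every $n$; in the coordinates $Exp_{\gamma,\hf^{-n}(\hx)}$, the bound $|p_u(g_\hx(w))|\ge e^{\chi_u-\gamma}|p_u(w)|-\varepsilon_0 r$ makes the $u$‑coordinate satisfy an affine recursion that forces it to $0$ at rate $e^{-\chi_u+\gamma}$, and the $s$‑coordinate is $\le\theta$ times the $u$‑coordinate, so the whole coordinate vector decays at that rate. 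Transporting back to $\proj^2$ through $Exp_{\gamma,\hf^{-n}(\hx)}$, whose bi‑Lipschitz constants along the history are $\gamma$‑moderate, and summing the resulting geometric series yields $d\big(f_{-n}(y),x_{-n}\big)\le c\,e^{(-\chi_u+2\gamma)n}$ for some $c=c(\hx)>0$; since $\gamma$ was an arbitrarily small constant fixed at the outset (and can be removed altogether when $\log^+\|(D\hf)^{-1}\|\in L^1(\hat{\m})$), this is the announced estimate. The stable estimate is identical, using the contraction $\le e^{\chi_s+\gamma}$ of vertical graphs under $g$ and the forward iterates $f^n(y)$ for $y\in W^s_{loc}(\hx)$.

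\emph{Uniqueness.} If $\Gamma\subset Exp_{\gamma,\hx}B(\delta(\hx))$ is any horizontal graph whose backward cut‑off orbit stays in the charts, then $\Gamma=F_n(\Gamma_{-n})$ for every $n$, with $\Gamma_{-n}$ its $n$‑th backward preimage; since $F_n$ of any input converges to $W^u_{loc}(\hx)$, we get $\Gamma=W^u_{loc}(\hx)$. (Equivalently, two distinct such graphs would violate the $C^0$‑contraction.) The same reasoning gives uniqueness of $W^s_{loc}(\hx)$.

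\emph{Main difficulty.} The only non‑bookkeeping step is to extract a genuine contraction rate $\lambda<1$ for the graph transform from the expansion/Lipschitz bounds of Proposition \ref{Pesin donne H like}, and then to convert the clean contraction in Pesin coordinates into an ambient‑metric exponential estimate with a single constant $c(\hx)$ — which is precisely where the $\gamma$‑moderate (tempered) control of the charts $Exp_{\gamma,\hf^{-n}(\hx)}$ along the whole infinite history $\hx$ is needed.
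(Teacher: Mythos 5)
Your overall route --- constructing $W^u_{loc}(\hx)$ and $W^s_{loc}(\hx)$ by the Hadamard--Perron graph transform run along the history, using the horizontal-like structure and the expansion/contraction estimates of Proposition \ref{Pesin donne H like}, then transporting back through the tempered charts --- is exactly what the paper intends by its one-line proof (``this follows from Proposition \ref{Pesin donne H like}''), so in substance you are writing out the argument the paper leaves implicit, and the existence, uniqueness and invariance parts are fine.

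One step of the decay estimate is mis-justified as written. You argue that $|p_u(g_\hx(w))|\ge e^{\chi_u-\gamma}|p_u(w)|-\varepsilon_0 r$ gives an ``affine recursion that forces the $u$-coordinate to $0$''. It does not: if $a_n$ denotes the $u$-coordinate of $f_{-n}(y)$ in the chart at $\hf^{-n}(\hx)$, this inequality only yields $a_n\le e^{-(\chi_u-\gamma)n}a_0+\varepsilon_0 r\sum_{k\ge 1}e^{-(\chi_u-\gamma)k}$, i.e.\ a bound of order $\varepsilon_0 r$, not convergence to $0$. The decay must instead come from a multiplicative estimate: use $\|h(w)\|\le\varepsilon_0\|w\|$ from Theorem \ref{Pesin faible}, or equivalently the difference bound $|p(g_\hx(w))-p(g_\hx(w'))|\ge\left(e^{\chi_u-\gamma}-\tfrac{2}{c}\varepsilon_0\right)|z-z'|$ from the proof of Proposition \ref{Pesin donne H like} applied with $w'=0$ (recall $g_\hx(0)=0$ and the unstable graph passes through $0$ with controlled Lipschitz constant); this gives $|p_u(g_\hx(w))|\ge\left(e^{\chi_u-\gamma}-\mathrm{const}\,\varepsilon_0\right)|p_u(w)|$ along the invariant graph, hence the exponential backward decay, and symmetrically the forward decay on $W^s_{loc}(\hx)$. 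With that repair your proof goes through; the residual $O(\gamma)$ loss in the exponent (you get $e^{(-\chi_u+2\gamma)n}$ rather than $e^{-\chi_u n}$) is consistent with how the paper itself states the estimates \eqref{contraction}--\eqref{dilatation}.
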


\begin{proof}
This follows from Proposition \ref{Pesin donne H like}.
\end{proof}

  We also recall the concept of common Lyapunov chart, from \cite{BLS}, which will have an important role (see also \cite{DDG3}).
  
\begin{defi}
The   \textbf{Lyapunov chart} $L(\hat{p})$ of $\hp$ is the image by $\Exp_{\gamma,\hp}$ of the  affine bidisk of size $r(\hat{p})$ and axes $E^u (\hat{p}), E^s (p)$.
\end{defi}
 Notice that $f : L(\hat{p}) \rightarrow L( \hf (\hat{p}))$ is a  horizontal-like map of  degree 1.
We consider the sets:
\begin{equation}\label{def Ln}
L^s_n (\hat{p}) := \lbrace y \in L(\hat{p})| \, \forall 1 \leq j \leq n, \, f^j (y) \in L(\hf^j (\hp)) \rbrace\\
 \text{ and } L^u_n (\hp) := f^n L^s_n (\hf^{-n} (\hp)),
\end{equation}
and there analogues in $\hat{\proj}^2$
\begin{equation}
\hL^s_n (\hat{p}) := \lbrace \hy \in \pi_0^{-1}(L(\hat{p}))| \, \forall 1 \leq j \leq n, \, f^j (y_0) \in L(\hf^j (\hp)) \rbrace 
\\ \text{ and } \hL^u_n (\hp) := \hf^n \hL^s_n (\hf^{-n} (\hp)). \nonumber
\end{equation}
The set $L^{s/u}_n (\hp) $  converge exponentially fast to the \textbf{ local stable/unstable submanifold}  $W^{s/u}_{loc} (\hp)$.
The  local stable/unstable submanifold $\hW^{s/u}_{loc} (\hp)$ of $\hp$ in $\hat{\proj}^2$ is the  limit of $\hL^{s/u}_n(\hp) $.
Thus, depending on the  context, we will see the  local stable/unstable submanifold either as subset of $\proj^2$ or of $\hat{\proj}^2$.
Notice that $W^s(\hp)$ depend only on $p=\pi_0(\hp)$ while $W^u(\hp)$ depend on the chose of the preimages of $p$.

\begin{lemme}\label{def Rvarepsilon}
For every $\varepsilon > 0$, there exist a compact set  $\hR_\varepsilon \subset \hR$ of $\hat{m}$-measure at least  $1 - \varepsilon$, and $r>0$ such that  for every $\hp\in \hR_\varepsilon$ the submanifold $W^s_{loc}(p)$ (resp. $W^u_{loc}(\hp)$) is a graph above a disk of radius $r(p)\geq r$ in $E^s(p)$ (resp. of radius $r(\hp)\geq r$ in $E^u(\hp)$) and the restriction to $\hR_\varepsilon$ of all the preceding maps, as the directions of $E^u,E^s$ and the stable and unstable submanifolds, are continuous.
\end{lemme}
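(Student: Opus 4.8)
The plan is to obtain $\hR_\varepsilon$ by applying Lusin's theorem to the measurable objects furnished by Pesin theory (Theorem \ref{th Oseledets}, Theorem \ref{Pesin faible} and the propositions that follow them), after discarding the set on which the injectivity radius $\delta$ degenerates.

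First I would list the relevant measurable data, defined on a full $\hm$-measure set $\hY\subset\hat{\proj}^2$: the reduction cocycle $\hx\mapsto C_\gamma(\hx)\in GL_2(\C)$ and the function $\hx\mapsto\delta(\hx)\in(0,\infty)$ of Theorem \ref{Pesin faible}; the Oseledets splitting $\hx\mapsto\big(E^u(\hx),E^s(\hx)\big)$, viewed as a map into $\proj^1(\C)\times\proj^1(\C)$ (lines through the origin in $T_{x_0}\proj^2\cong\C^2$), which is measurable by Theorem \ref{th Oseledets}; the charts $\hx\mapsto\Exp_{\gamma,\hx}$; and the local stable/unstable submanifolds $W^s_{loc}(\hx),W^u_{loc}(\hx)$, which by the propositions recalled above are graphs of holomorphic functions over disks of radius $r(\hx)$ in $E^s(\hx)$, resp. $E^u(\hx)$, with $r(\hx)$ a fixed fraction of $\delta(\hx)$. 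Since $W^{s/u}_{loc}(\hx)$ is the locally uniform, exponentially fast limit of the finite graph transforms $\hL^{s/u}_n(\hx)$ of \eqref{def Ln}, which depend measurably on $\hx$ through $C_\gamma,\delta$ and the splitting, the assignment $\hx\mapsto W^{s/u}_{loc}(\hx)$ is itself measurable into the space of $C^0$ graphs over a fixed bidisk.

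Then I would invoke Lusin's theorem. The space $\hat{\proj}^2$ is a compact metrizable space (a closed subset of the compact product $(\proj^2)^\N$), so the Borel probability measure $\hm$ is inner regular with respect to compact sets. Applying Lusin's theorem to each of the finitely many measurable maps $C_\gamma,\delta,E^u,E^s,W^s_{loc},W^u_{loc}$ and $\hx\mapsto\Exp_{\gamma,\hx}$, and intersecting the resulting compact sets, I get a compact set $K$ with $\hm(K)>1-\varepsilon/2$ on which all of them are continuous. For the uniform lower bound on the radius, I use that $\delta>0$ $\hm$-a.e., so $\hm(\{\delta\geq 1/k\})\to 1$; choosing $k$ with $\hm(\{\delta\geq 1/k\})>1-\varepsilon/2$ and letting $r>0$ be the corresponding lower bound for $r(\cdot)$, I set $\hR_\varepsilon:=K\cap\{\delta\geq 1/k\}$. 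Continuity of $\delta$ on $K$ makes $\hR_\varepsilon$ closed in $K$, hence compact, and $\hm(\hR_\varepsilon)\geq 1-\hm(\hat{\proj}^2\setminus K)-\hm(\{\delta<1/k\})>1-\varepsilon$. On $\hR_\varepsilon$ the radius $r(\hp)$ is $\geq r$, so $W^s_{loc}(p)$ (resp. $W^u_{loc}(\hp)$) is a graph over a disk of radius $\geq r$ in $E^s(p)$ (resp. $E^u(\hp)$), and each of the maps above is continuous on $\hR_\varepsilon$ as a restriction of a map continuous on $K$; this is precisely the claim.

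The only genuinely delicate point is to justify that $\hp\mapsto W^{s/u}_{loc}(\hp)$ is a measurable family of graphs (so that Lusin applies to it directly) and that the chart radius $r(\hp)$ is comparable to $\delta(\hp)$: both follow from the exponentially fast convergence of the measurable graph transforms $\hL^{s/u}_n$ recalled before the statement, which exhibits the limiting submanifold as a pointwise limit of measurable maps into $C^0$. Everything else is a routine application of Lusin's theorem together with the inner regularity of $\hm$; this is the argument used in \cite{BLS} for the common Lyapunov chart.
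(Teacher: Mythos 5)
Your proposal is correct; the paper in fact states this lemma without proof, treating it as the standard construction of Pesin sets, and your argument (measurability of the Oseledets splitting, of $C_\gamma$, $\delta$ and of the local stable/unstable graphs as limits of the graph transforms, then Lusin's theorem plus inner regularity of $\hat{\m}$ on the compact metrizable space $\hat{\proj}^2$, intersected with a set where $\delta\geq 1/k$ to get the uniform radius $r$) is precisely the classical argument the paper implicitly invokes, as in \cite{BLS}. So you have supplied, correctly, the same proof the paper intends.
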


If $\hp$ and $\hat{q}$ are close enough in $\hat{\proj}^2$ then the intersection  $\hW^s_{loc} (p) \cap\ W_{loc}^u (\hat{q})$ is reduced to a  point usually denote by $[p, \hat{q} ]$. 
A subset is said to have a product structure if it is closed for $[.,.]$.
A \textbf{ Pesin box} $\hP$ is a compact subset of $\hat{\proj}^2$ of  positive measure, with a product structure such that  the size of the Lyapunov chart of every $\hx\in \hP$ is bounded from below by a positive constant.

\begin{lemme}(Bedford-Lyubich-Smillie)
For each $\eta >0$, there exists a finite family of disjoint Pesin boxes  $\hP_i$ such that $\diam \left(\pi_0(\hP_i)\right)<\eta$ and $\bigcup \hP_i$ cover $\hR_\varepsilon$ (so $\hat{\m}(\bigcup \hP_i)\geq 1-\varepsilon$).
\end{lemme}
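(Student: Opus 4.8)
The plan is to run the Bedford--Lyubich--Smillie construction \cite{BLS} using only the uniform data on $\hR_\varepsilon$ supplied by Lemma \ref{def Rvarepsilon}, so that the possibly infinite stable exponent plays no role. First I would fix a scale: since the splitting $E^u\oplus E^s$, the local manifolds $W^s_{loc}(x),W^u_{loc}(\hx)$, the charts $\Exp_{\gamma,\hx}$ and the sizes of the Lyapunov charts $L(\hx)$ all vary continuously over the compact set $\hR_\varepsilon$, there are $\rho_0>0$ and $C_0\geq 1$ such that for any $\hp,\hx,\hy\in\hR_\varepsilon$ with $d(\hx,\hp),d(\hy,\hp)<\rho_0$ the bracket $[\hx,\hy]=W^s_{loc}(x)\cap W^u_{loc}(\hy)$ is a single point, depends continuously on $(\hx,\hy)$, lies $C_0\rho_0$-close to $\hp$, and carries a Lyapunov chart of size $\geq r_0$ for a uniform $r_0>0$. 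This last assertion is the one requiring an argument: $W^s_{loc}([\hx,\hy])$ is a subpiece of $W^s_{loc}(x)$ and $W^u_{loc}([\hx,\hy])$ a subpiece of $W^u_{loc}(\hy)$, each of size $\geq r$ by Lemma \ref{def Rvarepsilon}, and the angle between $E^s$ and $E^u$ at $[\hx,\hy]$ stays close to the one at $\hp$, hence is bounded below by compactness; since $L$ is manufactured from these two manifolds and this angle, its size cannot collapse.

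Next I would set up the local product model. Fix $\hp\in\hR_\varepsilon$ and $0<\rho<\rho_0$ and, on $\hR_\varepsilon\cap\overline{B(\hp,\rho)}$, put $\alpha_\hp(\hz)=[\hz,\hp]\in W^u_{loc}(\hp)$, $\beta_\hp(\hz)=[\hp,\hz]\in W^s_{loc}(\hp)$ and $\Phi_\hp=(\alpha_\hp,\beta_\hp)$. The elementary bracket identities $[\alpha_\hp(\hz),\beta_\hp(\hz)]=\hz$ and $\Phi_\hp([\hz_1,\hz_2])=(\alpha_\hp(\hz_1),\beta_\hp(\hz_2))$ show that $\Phi_\hp$ is a homeomorphism of $\hR_\varepsilon\cap\overline{B(\hp,\rho)}$ onto a subset of the genuine product $W^u_{loc}(\hp)\times W^s_{loc}(\hp)$, and that a subset has a product structure exactly when its $\Phi_\hp$-image is a product of sets. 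In particular
\[
\hP(\hp):=\{\, [\hz_1,\hz_2] \ :\ \hz_1,\hz_2\in\hR_\varepsilon\cap\overline{B(\hp,\rho)} \,\}
\]
has $\Phi_\hp$-image $\alpha_\hp(\hR_\varepsilon\cap\overline{B(\hp,\rho)})\times\beta_\hp(\hR_\varepsilon\cap\overline{B(\hp,\rho)})$, so it is compact, has a product structure, and contains $\hR_\varepsilon\cap B(\hp,\rho)$ (as $[\hz,\hz]=\hz$); by the first step every one of its points has Lyapunov chart of size $\geq r_0$, so $\hP(\hp)$ is a Pesin box, and $\diam(\pi_0(\hP(\hp)))<\eta$ once $\rho$ is small compared with $\eta/C_0$.

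Then I would cover and disjointify. By compactness finitely many balls $B(\hp_1,\rho),\dots,B(\hp_N,\rho)$ with $\hp_j\in\hR_\varepsilon$ cover $\hR_\varepsilon$, yielding Pesin boxes $\hP_1,\dots,\hP_N$ with $\bigcup_j\hP_j\supset\hR_\varepsilon$, each of $\pi_0$-diameter $<\eta$. If $\hP_i\cap\hP_j\neq\emptyset$, the diameter bound and the choice of $\rho_0$ force both into a single bracket-coordinate chart, in which each $\hP_k$ reads as a product $A_k\times B_k$ of compact sets. I would then argue by induction: set $\hP_1^\ast=\hP_1$; given pairwise disjoint Pesin boxes $\hP_1^\ast,\dots,\hP_{j-1}^\ast$ with union $\bigcup_{i<j}\hP_i$, write $\hP_j\setminus\bigcup_{i<j}\hP_i$, which in the chart of $\hp_j$ equals $(A_j\times B_j)\setminus\bigcup_i(A_i\times B_i)$, as a finite disjoint union of products $A'\times B'$ by decomposing $A_j$ along the finite Boolean algebra generated by the compacts $A_i\cap A_j$ and $B_j$ along that generated by the $B_i\cap B_j$; pulling back by $\Phi_{\hp_j}^{-1}$ and passing to closures where an atom fails to be closed produces finitely many Pesin boxes whose possible mutual overlaps, and the pieces lost in taking closures, are $\hat{\m}$-null and hence irrelevant. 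After $N$ steps one gets a finite family of pairwise disjoint Pesin boxes of $\pi_0$-diameter $<\eta$ whose union contains $\hR_\varepsilon$ up to a null set; since $\hat{\m}(\hR_\varepsilon)\geq 1-\varepsilon$ this yields $\hat{\m}(\bigcup_i\hP_i)\geq 1-\varepsilon$, and boxes of zero measure may be dropped.

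The whole argument hinges on the first two steps; the genuinely delicate point is the uniform lower bound $r_0$ on the size of the Lyapunov chart at a bracket point $[\hx,\hy]$, which in general lies outside $\hR_\varepsilon$ and so is not directly covered by Lemma \ref{def Rvarepsilon}. Once that is granted, the product rectification and the combinatorial disjointification are the standard bookkeeping of the Pesin-box construction of \cite{BLS}, to which I would refer for the remaining routine details.
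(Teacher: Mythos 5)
The paper itself disposes of this lemma in one line: it invokes \cite{BLS2}, Lemma 1, and observes that the proof goes through verbatim in $\hat{\proj}^2$ once $W^{s/u}_{loc}(\hp)$ is replaced by $\hW^{s/u}_{loc}(\hp)$. Your reconstruction follows the same general outline (bracket-saturate small pieces of $\hR_\varepsilon$, cover by compactness, disjointify using the product structure), but the step you yourself single out as the delicate one is not actually closed, and it is precisely where the argument can fail with the paper's definitions. A Pesin box here is required to have a Lyapunov chart of size bounded below at \emph{every} one of its points, and Lyapunov charts are produced by Theorem \ref{Pesin faible} only for points of the full-measure set $\hY$, with size $\delta(\cdot)$ governed by the $\gamma$-moderate functions, i.e.\ by the asymptotics of the derivative along the entire orbit. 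A bracket point $[\hx,\hy]$ of two points of $\hR_\varepsilon$ need not be Pesin regular (regularity is not preserved by the bracket), so it may carry no Lyapunov chart at all; your justification --- that the chart is ``manufactured'' from $W^s_{loc}(x)$, $W^u_{loc}(\hy)$ and the angle between $E^s$ and $E^u$ --- misidentifies what controls $\delta$: transversality of two disks of definite size says nothing about the nonlinear estimates along the forward and backward orbit of the bracket point. The BLS construction handles exactly this point by controlling the box through the manifolds and charts of the regular points generating it, rather than by claiming chart bounds at the bracket points themselves, and your argument as written supplies no substitute for that.

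A second, more minor, gap is in the disjointification. After writing $(A_j\times B_j)\setminus\bigcup_{i<j}(A_i\times B_i)$ as a union of Boolean atoms and taking closures, you assert that the mutual overlaps of the resulting closed products, and the pieces lost in taking closures, are $\hm$-null. Boundaries of compact subsets of the local unstable/stable disks can carry positive transverse measure, so this is not automatic; and the lemma asks for genuinely disjoint boxes, not boxes disjoint modulo null sets. This can be repaired (choose the cutting sets with null boundaries, or keep Borel atoms and exhaust them from inside by compact products using inner regularity of the projected measures, at the cost of covering $\hR_\varepsilon$ only up to small measure), but as written the step is asserted rather than proved. So either follow the paper and reduce the lemma to \cite{BLS2}, Lemma 1, checking only that the natural-extension formalism changes nothing, or supply these two missing arguments explicitly.
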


\begin{proof}
The proof of [BLS2, Lemma 1] stay true in our situation, i.e. in $\hat{\proj}^2$, up to replace  $W^{s/u}_{loc}(\hp)$ by $\hW^{s/u}_{loc}(\hp)$
\end{proof}

\begin{prop/defi}
If $\eta>0$ is small enough then for every Pesin box $\hP_i$, satisfying $\diam \left(\pi_0(\hP_i)\right)<\eta$, there exists a   \textbf{common  Lyapunov chart} $L_i$ such that $\pi_0(\hP_i)\subset L_i \subset \underset{\hx\in \hP_i}{\bigcap} L(\hx)$, i.e.
for every $\hp\in \hP_i$, $W^s_{loc}(p_0)$ is a vertical disk in $L_i$ and  $W^u_{loc}(\hp)$ is a horizontal disk in $L_i$.
\end{prop/defi}

\begin{proof}
If $\eta>0$  is small enough, the  stable (resp. unstable) directions of points belonging to $\hP_i$ are almost parallel.
Up to reduce $\eta$, we assume that $\frac{r}{2}<r-\eta$, thus we chose $L_i$ as the image by $\Exp_{\gamma,\hp}$ of an affine  bidisk of axes $E^u (\hat{p}), E^s (p)$ and size $\frac{r}{2}$, where $\hp \in \hP_i$.
\end{proof}

\def\svgwidth{10cm}
\begin{figure}[!h]
\begin{center}
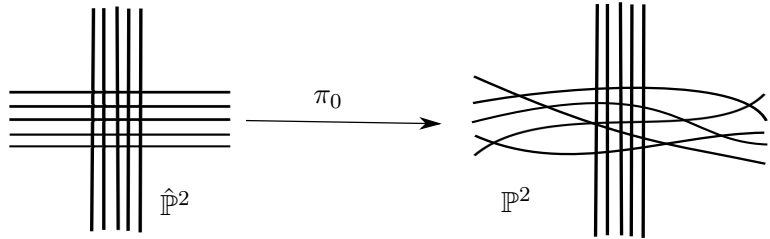
\caption{The image we have in mind of a Lyapunov chart and the stable/unstable manifolds}
\end{center}
\end{figure}

\begin{rmq}
For every $\hp\in \hP_i$, we may replace $L(\hp)$  by $L_i$. This change a little the definitions of  $L^{s/u}_n(\hp)$ (and of $\hL^{s/u}_n(\hp)$) in \eqref{def Ln} :
\begin{eqnarray}\label{new def Ln}
&\forall \hp \in \hP_i, \,
L^s_n (\hat{p}) := \lbrace y \in L_i| \, \forall 1 \leq j \leq n, \, f^j (y) \in 
L_{k_{j}}
 \rbrace \text{ where } p_j \in L_{k_{j}} \nonumber \\ 
 & \text{ and } L^u_n (\hp) := f^n L^s (\hf^{-n} (\hp)).
\end{eqnarray}
For every $\hp\in \hP_i$, denote by $W^{s/u}_{L_{i}}(\hp)$ (or   $W^{s/u}_{L}(\hp)$ or  $W^{s/u}_{loc}(\hp)$) the local  stable/unstable manifold
\begin{eqnarray}
& W^{s/u}_{L_{i}}(\hp)= \underset{n\in \N}{\bigcap} L^{s/u}_n (\hat{p}),
\end{eqnarray}
which is a vertical/horizontal disk in $L_i$.
\end{rmq}

We recall that  $f:L(\hp)\rightarrow L(\hf(\hp))$ is a  horizontal-like map of degree 1, so the local stable and unstable manifolds, defined like this, are transverse in each common Lyapunov chart.
Notice that if $\log ||d\hf^ {-1}||\notin L^1(\m) $, we cannot assume that $f:L(\hp)\rightarrow L(\hf(\hp))$ is injective.

By the Poincaré recurrence theorem, up to removing a subset of zero  measure, we assume that for every $ \hx\in \Rr_\varepsilon$,  there exist infinitely many $n>0$ such that $\hf^n(x)\in \Rr_\varepsilon$.
And, up to remove a subset of $\varepsilon$ measure to $\R_\varepsilon$, there exists $C <\infty$ such that, for every $\hP_j$ and every $\hx \in\hP_j$, the following  properties are true :
\begin{align}
\label{contraction} &\dist(f^n(x_0),f^n(y_0)) < Ce^{-n(\chi^s-\gamma)} \text{ for } n \geq 1 \text{ and } y_0\in W^s_{L_j}(x_0),\\
\label{dilatation}&\dist(x_{-n},y_{-n})	< Ce^{-n(\chi^u-\gamma)} 
\text{ for } n > 1 \text{ and } \hat{y}\in W^u_{L_j}(\hat{x}).
\end{align}
Let $\Delta$ be a disk, it follow from the proof of \cite[Proposition 5.10]{FS2}  that $T\intgeom [\Delta]=0$ iff $(f^n_{|\Delta})$ is normal. Thus, for  $\m$-almost every $\hx\in \Rr$, $T\llcorner_{W^u_{L(\hx)}(\hat{x})}:=T\intgeom[W^u_{L(\hx)}(\hat{x})]$ is a  positive measure. Up to remove a subset of $\varepsilon$ measure to  $\R_\varepsilon$, there exists $m_0> 0$ such that for every $\hx\in \hP_j\subset \R_\varepsilon$

\begin{align}
\label{mesure positive}
&T\llcorner_{W^u_{L_j}(\hat{x})} (W^u_{L_j}(\hat{x})) \geq m_0.
\end{align}

For every $F \subset \hP \subset\Rr_\varepsilon \subset \{\hat{x}\in \hat{\Rr}: r(\hat{x}) \geq r\}$, denote by $W^{s/u}_{L}(F)$ the union of local stable/unstable manifolds of $F$
\begin{eqnarray}
W^{s/u}_{L}(F)= \underset{\hat{x}\in F}{\bigcup} W^{s/u}_{L}(\hat{x}).
\end{eqnarray}

\section{Conditional measures on unstable manifolds}\label{section structure geometrique}

Let $\m$ be an invariant (under $f$) probability measure  and $\Br$ alors $\supp(\m)\subset \A$. 
$\xi$ be a measurable  partition of $\proj^2$. The measure $\m$ may be disintegrated with respect to $\xi$, i.e.  the conditional measures $\m(\cdot|\xi(x))$ of $\m$ on the fibers of  $\xi$ are well defined, see \cite{BLS} and the references in it.

Denote by $J_{\m,\xi}f$ the Jacobian of $f$ with respect to the conditional measures of $\m$ on the fibers of $\xi$, i.e. the Radon-Nikodym derivative of $f$ with respect to $\m(\cdot|\xi(x))$
$$J_{\m,\xi}f(x)= \dfrac{\ds f^*\m(\cdot|\xi(f(x)))}{\ds \m(\cdot|\xi(x))}=\dfrac{1}{\m\left( f^{-1}(\xi(f(x)))|\xi(x) \right)}$$
since $\m$ is invariant under $f$.
Denote by $p(x)=\m((f^{-1}\xi)(x)|\xi(x))$ and $h_{\m}(f,\xi)$ the entropy of $\m$ with respect to $\xi$, we have:
\begin{equation}\label{egalite entropie int log(p)}
h_{\m}(f,\xi)=-\int \log(p(x))\ds \m(x)=\int \log(J_{\m,\xi}f(x))\ds \m(x).
\end{equation}
The partition $\xi$ is said to be \emph{generating} if the partition $\xi^\infty=\bigvee_{n=0}^\infty f^n \xi$ is the partition generated by singletons. In this case, we have $h_{\m}(f)=h_\m(f,\xi)$.

Assume that $\supp(\m)\cap \supp(\mu_{eq})=\emptyset$ and $\m$ is of (maximal) entropy $\log(d)$.
Then, by Margulis-Ruelle inequality,  $\m$ admit a positive  Lyapunov exponent and $\hm$-almost every point $\hx\in\hat{\proj}^2$ admit  an Pesin unstable manifold $\hat{W}^u(\hx)$.
Assume also that $\m$ admit a negative exponent.

\begin{defi}
A measurable partition $\hxi$ of $\hat{\proj}^2$ is said to be subordinate to  the unstable manifolds of $(\hf,\hm)$ if for $\hm$-a.e. $\hx\in\hat{\proj}^2$, $\hxi(x)$ has the following properties:
\begin{enumerate}
\item  $\pi_{0|\hxi(\hx)}: \hxi(\hx) \rightarrow \pi_0(\hxi(\hx))$ is bijective,
\item $\pi_0(\hxi(\hx))\subset W^u(\hx)$ and $\pi_0(\hxi(\hx))$ contains an open neighborhood of $x_0$ in $W^u(\hx)$.
\end{enumerate} 
\end{defi}

\begin{prop}\label{prop existence partition sub à W^u}
Let $\m$ be a $f$-invariant  measure with support outside $\supp(\mu_{eq})$, of (maximal) entropy $\log(d)$. If for $\m$-a.e. $x$ the Lyapunov exponent satisfy $\chi_u(x)>0\geq \chi_s(x)$  then there exists a generating partition $\hxi^u$ of $\hat{\proj}^2$ subordinate to Pesin unstable manifolds. 
Moreover, for all $\hx$ we have $$\left(\hf^{-1}\hxi^u\right)(\hx)=\hf^{-1}\left(\hxi^u(\hf(\hx))\right)\subset \hxi^u(\hx).$$
\end{prop}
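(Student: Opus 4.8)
The plan is to construct $\hxi^u$ as a countable refinement of a partition into "unstable plaques" obtained from the Pesin boxes and common Lyapunov charts recalled in the previous section, following the classical Ledrappier--Strelcyn / Ledrappier--Young construction but carried out in the natural extension $\hat{\proj}^2$. First I would fix $\varepsilon>0$ small and consider the compact set $\hR_\varepsilon$ from Lemma \ref{def Rvarepsilon}, together with a finite cover of it by Pesin boxes $\hP_i$ equipped with their common Lyapunov charts $L_i$; on each $\hP_i$ the local unstable manifolds $W^u_{L_i}(\hx)$ are disjoint horizontal disks of uniform size. Define a first partition $\hzeta$ of $\bigcup_i \hP_i$ whose atoms are the slices $\pi_0^{-1}(W^u_{L_i}(\hx))\cap \hP_i$, and extend it to all of $\hat{\proj}^2$ by adding the single atom $\hat{\proj}^2\setminus\bigcup_i\hP_i$. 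This $\hzeta$ is measurable and its atoms sit inside unstable manifolds, but it has two defects: it is not generating, and it is not contained in $\hf^{-1}(\hzeta\circ\hf)$-decreasing form. Both defects are repaired by passing to $\hxi^u:=\bigvee_{n\ge 0}\hf^{-n}\hzeta$, intersected with the "return" structure so that atoms remain inside a single unstable plaque.

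The key steps, in order, are: (i) verify that for $\hm$-a.e. $\hx$ the forward orbit returns infinitely often to $\bigcup_i\hP_i$ (Poincaré recurrence, already invoked in the excerpt), so that the refinement $\hxi^u$ has atoms that are nonempty relatively open subsets of $W^u(\hx)$ — here one uses that $\hf$ expands unstable directions, hence $\hf^{-n}$ contracts them and the intersection $\bigcap_{n\ge0}\hf^{-n}(\hzeta(\hf^n\hx))$ still contains a neighborhood of $x_0$ in $W^u(\hx)$ by the uniform size bound $r(\hx)\ge r$ on $\hR_\varepsilon$ and the exponential contraction \eqref{dilatation}; (ii) check the injectivity of $\pi_0$ on each atom, which is immediate because an atom lies in a single local unstable plaque which is a graph over a disk in $E^u$, so $\pi_0$ restricted to it is a graph projection; (iii) check the refinement property $(\hf^{-1}\hxi^u)(\hx)\subset\hxi^u(\hx)$, which follows tautologically from $\hxi^u=\bigvee_{n\ge0}\hf^{-n}\hzeta$ since applying $\hf^{-1}$ only adds the term $\hf^{-0}\hzeta$ that is already present — one must only check that $\hf^{-1}$ of an unstable plaque stays inside one unstable plaque, which is exactly the graph-transform statement that $f$ is horizontal-like of degree $1$ on Lyapunov charts; (iv) prove $\hxi^u$ is generating, i.e. $\bigvee_{n\ge0}\hf^n\hxi^u$ separates points: given $\hx\ne\hy$ with the same atom for every $\hf^n\hxi^u$, they must lie in the same unstable plaque at every positive time, but then $\hf^{-n}\hx$ and $\hf^{-n}\hy$ stay boundedly close while their images under $\hf^n$ separate exponentially by expansion — contradiction; equivalently one invokes that a partition subordinate to unstable manifolds whose atoms contain unstable neighborhoods is automatically generating because the unstable manifolds, together with the dynamics, generate the Borel $\sigma$-algebra (this is where $\supp(\m)\cap\supp(\mu_{eq})=\emptyset$ and entropy $=\log d$ enter, via the positivity and finiteness of $\chi_u$ and the Margulis--Ruelle inequality guaranteeing the unstable manifolds exist and carry the entropy).

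The main obstacle I expect is step (i) combined with step (iv): making the atoms simultaneously (a) relatively open in the unstable manifold for $\hm$-a.e. $\hx$, despite the cover $\bigcup_i\hP_i$ only having measure $\ge 1-\varepsilon$, and (b) a genuine generating partition. The standard resolution is to let $\varepsilon\to 0$ along a sequence $\varepsilon_k$, obtain partitions $\hzeta_k$ on sets of measure $\ge 1-\varepsilon_k$, and take $\hxi^u$ to be the common refinement $\bigvee_k\bigvee_{n\ge0}\hf^{-n}\hzeta_k$; one then argues that for $\hm$-a.e. $\hx$ infinitely many of the $\hP_i$'s of some $\hzeta_k$ are visited with $x_0$ deep in the interior, so the atom still contains an unstable neighborhood of $x_0$, while the refinement over all $k$ forces the generating property. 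Because the excerpt has already set up $\hR_\varepsilon$, the Pesin boxes, the common Lyapunov charts, the contraction/dilatation estimates \eqref{contraction}--\eqref{dilatation}, and Poincaré recurrence, most of this is bookkeeping; the only genuinely delicate point is controlling the relative-interior property uniformly enough to conclude the atom is a neighborhood of $x_0$ in $W^u(\hx)$, which is handled exactly as in \cite[Section 3]{BLS}.
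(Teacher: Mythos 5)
The paper's own proof is a one-line citation: it invokes Proposition 3.2 of \cite{qian:zhu} (see also \cite{led:str}), only checking that the single missing ingredient of Pesin theory here --- $f$ is injective only on horizontal disks of the Lyapunov charts, not on whole charts, since $\log^+\|(Df)^{-1}\|$ need not be integrable --- is not used in that proof. Your proposal instead redoes the Ledrappier--Strelcyn construction, which would be a legitimate alternative, but as written it has a genuine directional error that kills both conclusions. If you set $\hxi^u:=\bigvee_{n\ge 0}\hf^{-n}\hat{\zeta}$, the atom at $\hx$ is $\bigcap_{n\ge 0}\hf^{-n}\bigl(\hat{\zeta}(\hf^{n}\hx)\bigr)$. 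Along the unstable plaque through $\hx$ the map $\hf$ \emph{expands}, and the plaques of the Pesin boxes have diameter bounded by a fixed $r$; hence for a recurrent $\hx$ any $\hy\neq\hx$ in the plaque is eventually ejected from the plaque of $\hf^{n}\hx$ at some return time, so the atom meets $W^u(\hx)$ in essentially $\{x_0\}$ and contains no relative neighborhood of $x_0$ --- the estimate \eqref{dilatation} you invoke controls \emph{backward} distances and is irrelevant to this intersection. Likewise your step (iii) is backwards: $\hf^{-1}\hxi^u=\bigvee_{n\ge 1}\hf^{-n}\hat{\zeta}$ \emph{deletes} the $n=0$ term rather than adding one, so it is coarser than $\hxi^u$ and you get $(\hf^{-1}\hxi^u)(\hx)\supset\hxi^u(\hx)$, the reverse of the stated inclusion.

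The correct construction (the one in \cite{led:str}, \cite{qian:zhu}, and \cite{BLS}) is the push-forward join $\hxi^u=\bigvee_{n\ge 0}\hf^{n}\hat{\zeta}$ in the natural extension, where $\hf$ is invertible: then $\hf^{-1}\hxi^u$ contains the extra term $\hf^{-1}\hat{\zeta}$, hence is finer, which gives exactly $(\hf^{-1}\hxi^u)(\hx)\subset\hxi^u(\hx)$; and the atom $\bigcap_{n\ge 0}\hf^{n}\bigl(\hat{\zeta}(\hf^{-n}\hx)\bigr)$ can contain an unstable neighborhood of $x_0$ because $\hf^{n}$ expands the plaque pieces met along the backward orbit. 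Even with the right direction, the neighborhood property does not follow from the uniform bound $r(\hx)\ge r$ on $\hR_\varepsilon$ alone: one needs the Borel--Cantelli control of $d(\hf^{-n}\hx,\partial\hat{\zeta})$ (choosing $\hat{\zeta}$ with $\log d(\cdot,\partial\hat{\zeta})$ integrable), which is the technical heart of \cite{led:str} and \cite{qian:zhu}. Finally, your sketch never addresses the one point the paper does flag, namely that only the restriction of $f$ to horizontal disks is injective; in the push-forward construction this is exactly what is needed, and it should be said explicitly.
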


This proposition follows from Proposition 3.2 of \cite{qian:zhu}, see also \cite{led:str}. In fact, in our case all the properties of the Pesin theory are  satisfied except that only the restriction of $f$ to horizontal disks, and not to the entire  Lyapunov box, is injective but this is not needed in the proof.

Up to refine $\hxi^u$, we may assume that for $\hm$-almost every $\hx$,  $\pi_0(\hxi(\hx))$ is included in a Lyapunov box.  In particular, the map $\pi_0:\hxi(\hx)\rightarrow W^u(\hx)$ is injective.
We have the following proposition analogous to \cite[Proposition 3.2]{BLS}.

\begin{prop}\label{prop conditionnelle induite par green}
Let $\m$ be a $f$-invariant  measure with support outside $\supp(\mu_{eq})$, of (maximal) entropy $\log(d)$.
If for $\m$-a.e. $x$ the Lyapunov exponent satisfy $\chi_u(x)>0\geq \chi_s(x)$ then the conditional measures of $\hm$ on $\hxi^u$ are induced by the Green current $T$, i.e. for  $\hm$-a.e. $\hx$  we have 
$$\hm(\cdot|\hxi^u(\hx))=\dfrac{(\pi_0^{-1})_*\left( T\intgeom [D^u(\hx)] \right)}{M\left( (\pi_0^{-1})_*\left( T\intgeom [D^u(\hx)] \right) \right)}$$
where $D^u(\hx)=\pi_0\left(\hxi^u(\hx)\right)$.
\end{prop}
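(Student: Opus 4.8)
The plan is to follow the strategy of \cite[Proposition 3.2]{BLS}, adapted to the endomorphism setting where only the restriction of $f$ to horizontal disks is injective. The conclusion should follow from three ingredients: (1) the fact that the conditional measures of $\hm$ on the partition $\hxi^u$ are canonically attached to the unstable manifold geometry via the invariance relation $(\hf^{-1}\hxi^u)(\hx)\subset\hxi^u(\hx)$ from Proposition \ref{prop existence partition sub à W^u}, (2) the entropy formula \eqref{egalite entropie int log(p)} together with $h_\m(f)=h_\m(f,\hxi^u)=\log d$ since $\hxi^u$ is generating, and (3) the fact that the Green current restricted to an unstable disk, $T\intgeom[D^u(\hx)]$, satisfies a functional equation under pull-back by $f$ compatible with the dynamics: since $\frac1d f^*T = T$ and $f$ maps unstable disks to unstable disks, pulling back $T\intgeom[D^u(\hf(\hx))]$ along the branch of $f^{-1}$ that realizes $\hxi^u(\hx)$ multiplies the measure by a factor related to the local degree.

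First I would set up the comparison measure. For $\hm$-a.e. $\hx$, let $\mu_\hx := (\pi_0^{-1})_*(T\intgeom[D^u(\hx)])$, which is a finite positive measure on $\hxi^u(\hx)$ by \eqref{mesure positive} and the discussion around \cite[Proposition 5.10]{FS2}; normalize it to $\bar\mu_\hx = \mu_\hx / M(\mu_\hx)$. The key structural point is that the family $\{\bar\mu_\hx\}$ transforms correctly: because $f$ restricted to a local unstable manifold is a biholomorphism onto its image and $\frac1d f_* T^u$-type relations hold, the Green current's slice transforms as $\hf^*(\mu_{\hf(\hx)}) = d\cdot \mu_\hx$ on $\hxi^u(\hx)$ (up to the cut-off, which is harmless since $\hxi^u(\hf(\hx)) \subset \hf(\hxi^u(\hx))$ after refinement). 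Hence the normalized family satisfies exactly the same cocycle/invariance identity as the conditional measures $\{\hm(\cdot|\hxi^u(\hx))\}$.

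Next I would invoke the uniqueness of families of conditional measures satisfying such an invariance relation. Concretely: define $\lambda$ to be the measure on $\hat\proj^2$ whose disintegration along $\hxi^u$ is $\{\bar\mu_\hx\}$ and whose quotient on $\hat\proj^2/\hxi^u$ agrees with that of $\hm$; equivalently, consider $\frac{d\hm(\cdot|\hxi^u(\hx))}{d\bar\mu_\hx}(y) =: \psi(\hx,y)$ which is defined $\hm$-a.e. The invariance relations for both families force $\psi$ to be $\hf$-invariant as a function on $\hat\proj^2$ (the Jacobian factors cancel: $J_{\hm,\hxi^u}\hf$ computed from $\hm$ equals, by the entropy formula and the fact that $T$ gives weight-$d$ behavior, the corresponding Jacobian for $\bar\mu$). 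By ergodicity of $\hm$, $\psi$ is $\hm$-a.e. constant in $\hx$, and since both $\hm(\cdot|\hxi^u(\hx))$ and $\bar\mu_\hx$ are probability measures, $\psi \equiv 1$, giving the claimed equality.

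The main obstacle I anticipate is step two — verifying that $T\intgeom[D^u(\hx)]$ genuinely transforms with the clean factor $d$ under the graph-transform dynamics, rather than with some branch-dependent local degree. One must check that along a local unstable manifold of a point with a negative Lyapunov exponent, the relevant branch of $f^{-1}$ is single-valued (so no multiplicity issues arise) and that $f^*(T\intgeom[D])= T\intgeom[f^*D]$ holds slice-wise, which requires that $T$ has bounded (or at least sufficiently regular) local potentials so that the geometric intersection commutes with pull-back; this is where the hypothesis $\supp(\m)\cap\supp(\mu_{eq})=\emptyset$ is essential, since there $T$ has continuous local potentials and the slicing $T\wedge[D]=T\intgeom[D]$ is well-behaved. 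A secondary technical point is ensuring the cut-offs in the definition of the graph transform do not lose mass in the limit, which follows from the exponential convergence $L^u_n\to W^u_{loc}$ and \eqref{dilatation}.
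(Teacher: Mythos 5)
Your setup (the normalized slice measures $\mu_\hx=(\pi_0^{-1})_*\left(T\intgeom [D^u(\hx)]\right)/\rho(\hx)$, the transformation rule with the clean factor $d$ coming from $f^*T=dT$ together with injectivity of $f$ on local unstable manifolds, and the role of $\supp(\m)\cap\supp(\mu_{eq})=\emptyset$ in guaranteeing $\rho(\hx)>0$) matches the paper. But your second step has a genuine gap: you assert that "the Jacobian factors cancel: $J_{\hm,\hxi^u}\hf$ computed from $\hm$ equals, by the entropy formula, the corresponding Jacobian for $\bar\mu$", and then invoke a uniqueness/ergodicity argument. The entropy formula only gives equality of \emph{integrals}: with $p(\hx)=\hm\bigl(\hf^{-1}(\hxi^u(\hf(\hx)))\,|\,\hxi^u(\hx)\bigr)$ and $q(\hx)=\mu_\hx\bigl(\hf^{-1}(\hxi^u(\hf(\hx)))\bigr)=\frac{1}{d}\,\rho(\hf(\hx))/\rho(\hx)$, one gets $-\int\log p\,\ds\hm=h_{\hm}(f,\hxi^u)=\log d$ and $-\int\log q\,\ds\hm=\log d$ (the latter needs \cite[Lemma 2.7]{BLS} to kill the coboundary $\log\rho\circ\hf-\log\rho$, using that $\log\rho$ is bounded above because $q\leq 1$). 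Equality of these two averages does not give the pointwise equality $p=q$ that your "cancellation" requires; assuming it is assuming the heart of the statement. The missing ingredient is the strict-concavity step of \cite[Proposition 3.2]{BLS}: one computes directly, by summing over the disks $\hf^{-1}(\hxi^u(\hf(\hy)))$ contained in $\hxi^u(\hx)$, that $\int \frac{q}{p}\,\ds\hm=1$, and combines this with $\int\log\frac{q}{p}\,\ds\hm=0$ and Jensen's inequality to conclude $p=q$ $\hm$-a.e.; only then does one iterate with $\hf^n$ and use that $\bigvee_{n}\hf^{-n}\hxi^u$ is the partition into points to identify $\hm(\cdot|\hxi^u(\hx))$ with $\mu_\hx$.

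A secondary problem is that your Radon--Nikodym function $\psi(\hx,y)=\frac{\ds\hm(\cdot|\hxi^u(\hx))}{\ds\bar\mu_\hx}(y)$ presupposes $\hm(\cdot|\hxi^u(\hx))\ll\bar\mu_\hx$, which is not known a priori (it is part of the conclusion); and even granting it, constancy of $\psi$ along orbits would only identify the two measures after one also rules out a singular or null locus, which again comes for free in the paper's route (equality of the two families on the generating algebra of sets $\hf^{-n}(\hxi^u(\hf^n(\hx)))$) but not in yours. Your closing technical remarks (continuity of the potential of $T$ off $\supp(\mu_{eq})$, single-valuedness of the relevant inverse branch on unstable disks, harmlessness of the cut-offs) are correct and are indeed used in the paper, but they do not substitute for the Jensen argument.
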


\begin{proof}
Let $\hx$ be in the set of  Pesin regular points $\Rr$.
Denote by  $$\rho(\hx)=M\left((\pi_0^{-1})_*\left(T\intgeom [D^u(\hx)]\right)\right)=M(T\intgeom [D^u(\hx)])$$ where $D^u(\hx)=\pi_0\left(\hxi^u(\hx)\right)$.
 Since $D^u(\hx)$ is not a Fatou disk, $M(T\intgeom [D^u(\hx)])=\rho(\hx)>0$, see \cite[Proposition 5.10]{FS2}.
Thereby we may normalise the family $\left((\pi_0^{-1})_*\left(T\intgeom [D^u(\hx)]\right)\right)$ by $\rho(\hx)$, to obtain a family of probability measures
$$\mu_\hx:=\dfrac{(\pi_0^{-1})_*\left(T\intgeom [D^u(\hx)]\right)}{\rho(x)}.$$
Denote by $q(\hx)=\mu_\hx\left(\hf^{-1}(\hxi^u(f(x)))\right)$. By the Pesin theory, $f:f^{-1}\left(W^u_{loc}(\hf(\hx))\right)\cap  W^u_{loc}(\hx)\rightarrow W^u_{loc}(\hf(\hx))$ is injective, so we have 
\begin{align*}
q(\hx)&=\dfrac{\left( T\intgeom [D^u(\hx)] \right) \left( \pi_0\left(\hf^{-1}(\hxi^u(\hf(\hx)))\right) \right)}{ \rho(\hx)}&\\
& =\dfrac{\left( T\intgeom [D^u(\hx)] \right) \left( f^{-1}\left(\pi_0(\hxi^u(\hf(\hx)))\right) \right)}{ \rho(\hx)} &\\
& = \dfrac{1}{d}\cdot\dfrac{\left( T\intgeom [D^u(\hf(\hx))] \right) \left( \pi_0\left(\hxi^u(\hf(\hx))\right) \right)}{\rho(\hx)} &\text{ since } \dfrac{1}{d}f^*T=T\\
& =\dfrac{1}{d}\cdot\dfrac{\rho(\hf(\hx))}{ \rho(\hx)} &
\end{align*}
and then
$$\log (q(\hx))=\log\left(\rho(\hf(\hx))\right) - \log(\rho(\hx)) - \log (d).$$
Since $\hf^{-1}\left(\hxi^u(\hf(\hx))\right)\subset \hxi^u(\hx)$ and  $f^*T=d\, T$, we have
$$M\left(\rho(\hf(\hx))\right)=M(T\intgeom [D^u(\hf(\hx))])\leq M(T\intgeom [f(D^u(\hx)])=d \, M(T\intgeom [D^u(\hx)]),$$ and thus $\rho(\hf(\hx))\leq d \, \rho(\hx)$ et $q(\hx)\leq 1$.
The measure $\hm$ is $f$-invariant and $\log \circ \rho$ is bounded from above, so
\begin{equation}\label{int q(x)}
-\int \log (q(\hx)) \ds \hm(\hx)=\log(d),
\end{equation}
see \cite[Lemma 2.7.]{BLS}.

On the other hand, denote by $p(\hx)=\hm\left(\hf^{-1}\left(\hxi^u(\hf(\hx))\right)|\hxi^u(\hx)\right)$, since $\hxi^u$ is a generator (Proposition \ref{prop existence partition sub à W^u}), by \eqref{egalite entropie int log(p)} we have
\begin{equation}\label{int p(x)}
-\int \log \left(p(\hx)\right) \ds \hm(\hx)=h_{\hm}(f,\hxi^u)=h_{\hm}(f)=\log (d)
\end{equation}
We deduce from \eqref{int q(x)} and \eqref{int p(x)} that
$$\int \log \left( \dfrac{q(\hx)}{p(\hx)} \right)=0.$$
For $\hm$-almost every $\hx$ we have $ \hf^{-1}(\hxi^u(f(\hx)))\subset \hxi^u(\hx)$ and if $ \hf^{-1}(\hxi^u(f(\hy)))\subset \hxi^u(\hx)$ then $ \hxi^u(\hy)= \hxi^u(\hx)$.
By definition, $p$ and $q$ are constant on each disk of the form $\hf^{-1}(\hxi^u(f(\hy)))\cap  \hxi^u(\hx)$.
If  $\sum_{ \hf^{-1}(\hxi^u(f(\hy))) \subset \hxi^u(\hx)}$ denote the sum indexed on the set of disks of the  form  $ \hf^{-1}(\hxi^u(f(\hy)))$  included in  $\hxi^u(\hx)$ then we have 
\begin{eqnarray*}
\int \dfrac{q(\hx)}{p(\hx)} \ds \hm(\hx) & =& \int \left( \int_{\hxi^u(\hx)} \dfrac{q(\hx)}{p(\hx)} \ds \hm(\hx|\hxi^u(\hx)) \right)\ds \hm(\hx)\\
& =& \int \left(  \sum_{ \hf^{-1}(\hxi^u(f(\hy))) \subset \hxi^u(\hx)} \int_{\hf^{-1}(\hxi^u(f(\hy))} \dfrac{q(\hy)}{p(\hy)} \ds \hm(\hx|\hxi^u(\hx)) \right)\ds \hm(\hx)\\
& =& \int \left(  \sum_{ \hf^{-1}(\hxi^u(f(\hy))) \subset \hxi^u(\hx)}\dfrac{q(\hy)}{p(\hy)}  \hm(\hf^{-1}(\hxi^u(f(\hy))|\hxi^u(\hx)) \right)\ds \hm(\hx)\\
&= &\int \left(  \sum_{ \hf^{-1}(\hxi^u(f(\hy))) \subset \hxi^u(\hx)}\dfrac{q(\hy)}{p(\hy)}  \hm(\hf^{-1}(\hxi^u(f(\hy))|\hxi^u(\hy)) \right)\ds \hm(\hx)\\
& = &\int \left(  \sum_{ \hf^{-1}(\hxi^u(f(\hy))) \subset \hxi^u(\hx)}\dfrac{q(\hy)}{p(\hy)}  \times p(\hy) \right)\ds \hm(\hx)\\
& =&  \int \left(  \sum_{ \hf^{-1}(\hxi^u(f(\hy))) \subset \hxi^u(\hx)} q(\hy)\right)\ds \hm(\hx)\\
& = &\int 1\ds \hm (\hx)\\
&=&1
\end{eqnarray*}

Since $\log$ is concave, we obtain that for $\hm$-almost every $\hx$ we have $p(\hx)=q(\hx)$, i.e. for 
$\hm$-almost every $\hx$
$$\hm\left(\hf^{-1}\left(\hxi^u(\hf(\hx))\right)|\hxi^u(\hx)\right)= \mu_\hx\left(\hf^{-1}(\hxi^u(f(x)))\right).$$
Applying this to $\hf^n$, and since $\bigvee_{n\in\N} \hf^{-n}\hxi^u$  is the partition in singletons, we get:
$$\hm\left(\hf^{-n}\left(\hxi^u(\hf^n(\hx))\right)|\hxi^u(\hx)\right))= \mu_\hx\left(\hf^{-n}(\hxi^u(\hf^n(\hx)))\right)$$
and 
$$\hm(\cdot|\hxi^u(\hx))= \mu_\hx=\dfrac{(\pi_0^{-1})_*\left(T\intgeom [D^u(\hx)]\right)}{M\left( (\pi_0^{-1})_*\left(T\intgeom [D^u(\hx)]\right) \right)},$$
where $D^u(\hx)=\pi_0\left(\hxi^u(\hx)\right)$.
\end{proof}


 \section{Proof of Theorem \ref{th laminarite sans ens. att.}}\label{section construction}

We start with the construction of a laminar current $T^s_\hP\leq T$ subordinate to Pesin unstable manifolds while, in Theorem \ref{th laminarite sans ens. att.}, $W^s(x)\subset \supp(T^s)$ for $\nu$-a.e. $x$.
 We fix a Pesin box $\hP_i$ and a common Lyapunov chart $L_i$ and denote it by $\hP $ and $L$. 
 
\begin{theoreme}\label{constr. TshP}
If there exists a $f$-invariant measure  $\nu$ of entropy $\log(d)$ such that $\supp(\nu)\cap\supp(\mu_{eq})= \emptyset$ and its Lyapunov exponents satisfy $\chi_u>0>\chi_s$ then in each Pesin box $\hP$, there exists a positive current $T^s_\hP$ of bidegree $(1,1)$, uniformly laminar, of positive mass such that  $T^s_\hP\leq T$ and such that $T^s_\hP$ is closed in $L$. Moreover, $T^s_\hP$ is subordinate to $W^s_L(\hP)$ and  $W^s_L(\hp)= \supp(T^s_\hP)$.
\end{theoreme}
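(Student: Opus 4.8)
The plan is to build $T^s_\hP$ as a uniformly laminar current whose plaques are the local stable manifolds $W^s_L(\hp)$, $\hp\in\hP$, and to transport mass onto these stable disks via the dynamics and the disintegration of $\nu$ on unstable manifolds from Proposition \ref{prop conditionnelle induite par green}. First I would fix the Pesin box $\hP$ and common Lyapunov chart $L=L_i$, recalling from Lemma \ref{def Rvarepsilon} and the construction of common charts that every $W^s_L(\hp)$ is a vertical graph and every $W^u_L(\hp)$ a horizontal graph in $L$, with sizes bounded below uniformly on $\hP$. The family $\{W^s_L(\hp)\}_{\hp\in\hP}$ is a measurable family of pairwise disjoint vertical disks (disjointness because two stable manifolds through the box either coincide or, by the product structure and $[\cdot,\cdot]$, meet $W^u_L$ in distinct points and hence are disjoint vertical graphs). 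I would then define a transverse measure on this family as the push-forward under the holonomy map $\pi_0(\hxi^u(\hx))\ni p\mapsto W^s_L([p,\cdot])$ of the conditional measure $\nu(\cdot\,|\,\hxi^u(\hx))$, i.e. essentially the slice of $\nu$ along a fixed unstable plaque of $\hP$; integrating the currents of integration $[W^s_L(\hp)]$ against this transverse measure produces a uniformly laminar positive closed-in-$L$ current $T^s_\hP$. Uniform laminarity is immediate from the uniform lower bound on disk sizes and the disjointness; closedness in $L$ follows since each $[W^s_L(\hp)]$ is closed in $L$.

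Next I would verify $T^s_\hP\le T$. The key input is \eqref{mesure positive}: for $\hm$-a.e.\ $\hx$ the slice $T\llcorner_{W^u_L(\hx)}$ is a positive measure of mass $\ge m_0$, and by Proposition \ref{prop conditionnelle induite par green} (applied with $\m=\nu$) the conditional $\nu(\cdot\,|\,\hxi^u(\hx))$ is exactly this slice normalized. To compare $T^s_\hP$ with $T$ I would test against the unstable foliation: for a fixed unstable plaque $D^u=\pi_0(\hxi^u(\hx))$ contained in $L$, the intersection number $T\intgeom[D^u]$ dominates the analogous intersection $T^s_\hP\intgeom[D^u]$, because the plaques of $T^s_\hP$ are stable (vertical) graphs meeting each horizontal $D^u$ in exactly one point, and the transverse measure of $T^s_\hP$ was defined precisely as the corresponding piece of $T\intgeom[D^u]$. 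Slicing both currents by the measurable family $\{W^u_L(\hx)\}$ and using that a positive $(1,1)$-current is determined by its slices along a pair of transverse laminations in a bidisk, one gets $T^s_\hP\le T$ on $L$; since $\supp(T^s_\hP)\subset L$ this extends to $\proj^2$. The identity $W^s_L(\hP)=\supp(T^s_\hP)$ is then clear: every plaque carries positive transverse mass because the corresponding conditional measure gives positive mass to every relatively open subset of the unstable plaque (this is where one uses that unstable plaques are not Fatou disks, via \cite[Proposition 5.10]{FS2}), so each $W^s_L(\hp)$ lies in the support, and conversely the support is contained in the (closed) union of plaques.

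The main obstacle, I expect, is precisely the positivity of the mass of $T^s_\hP$ together with the inequality $T^s_\hP\le T$ holding honestly as currents rather than just as a statement about one slice. The delicate point is that $f:L(\hp)\to L(\hf(\hp))$ need not be injective (we are in the setting $\log\|D\hf^{-1}\|\notin L^1$), so the usual Bedford--Lyubich--Smillie push-pull that identifies the transverse structure of $T$ with a product of stable and unstable laminations must be run using only injectivity of $f$ on horizontal graphs, as in Propositions \ref{Pesin donne H like} and \ref{prop conditionnelle induite par green}. Concretely I would: (i) choose $n$ large with $\hf^n$ mapping a sub-box back into $\hP$ (Poincaré recurrence); (ii) express $T$ on $L$ using $\frac1{d^n}f^{n*}$ of a standard current and decompose its slices along $W^u$ into contributions along the $L^s_n$-pieces; (iii) pass to the limit to obtain that the stable-direction marginal of $T$ restricted to $\hP$ dominates the laminar current built from $\nu$'s stable disks. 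Establishing this domination rigorously, controlling the defect coming from non-injectivity and from plaques that exit $L$ under iteration, is the technical heart; everything else (uniform laminarity, measurability, the support identity) is routine given the Pesin-theoretic preliminaries already set up.
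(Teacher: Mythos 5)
Your construction has a genuine gap at its central step, the inequality $T^s_\hP\leq T$. You define $T^s_\hP$ directly as $\int [W^s_L(\hp)]\,\ds(\text{transverse measure})$, with transverse measure the normalized conditional of $\nu$ on an unstable plaque, and then claim the domination by ``slicing both currents by the family $\{W^u_L(\hx)\}$ and using that a positive $(1,1)$-current is determined by its slices along a pair of transverse laminations in a bidisk.'' That principle is not valid: a general positive closed current is not determined by, nor comparable through, its geometric slices on a transversal family unless it is already known to be uniformly laminar subordinate to the stable lamination in $L$ --- which is essentially what the theorem is meant to establish. Domination of slice measures $T^s_\hP\intgeom[D^u]\leq T\intgeom[D^u]$ says nothing about the part of $T$ (or of a candidate competitor) that is not organized along the stable disks, so the inequality of currents does not follow. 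Your fallback sketch (i)--(iii) gestures at the correct mechanism but leaves out the one quantitative ingredient that makes it work: a lower bound on the number of disjoint tubes $L^s_n(\hp)$, which cannot be extracted from the conditional-measure definition alone. (A secondary issue: your argument that every plaque lies in $\supp(T^s_\hP)$ because the conditional measure ``gives positive mass to every relatively open subset of the unstable plaque'' is also unjustified; \cite[Proposition 5.10]{FS2} gives positive total mass of $T\intgeom[D^u]$, not full support in $D^u$.)

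The paper's route reverses the logic so that $T^s_\hP\leq T$ is automatic and the work goes into positivity of mass. One takes a line $l$ with $\frac1{d^n}f^{n*}[l]\to T$ whose slice $\Delta=l\cap L$ is a vertical disk, and for each $\hp\in\hP\cap\hf^{-n}(\hP)$ pulls $\Delta$ back through the tube by the branch $f_{n,\hp}:L^s_n(\hp)\to L$, setting $f_n^*[\Delta]=\sum_{\hp}f_{n,\hp}^*[\Delta]$; by construction $\frac1{d^n}f_n^*[\Delta]\leq\frac1{d^n}f^{n*}[l]$, so every cluster value is $\leq T$, closed in $L$, and subordinate to $W^s_L(\hP)$ since $\Delta_{n,\hp}\subset L^s_n(\hp)\to W^s_L(\hp)$. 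The positive lower bound on the mass is then Proposition \ref{nb de tubes}: there are at least $d^n\hnu(\hP)(1-\varepsilon)$ disjoint tubes for $n\in\N_\hP$, and this count rests on the holonomy invariance (Proposition \ref{inv holonomie}) combined with Proposition \ref{prop conditionnelle induite par green} to prove Lemma \ref{majoration nu_{|L^s_n}}, i.e. each tube carries at most $d^{-n}\hnu(\hP)$ of the measure. Finally $T^s_\hP$ is taken as the supremum of the cluster values (they intersect correctly and are bounded by $T$, \cite[Lemma 6.12]{BLS}), and $W^s_L(\hp)\subset\supp(T^s_\hP)$ follows from Poincar\'e recurrence, not from a full-support property of the conditionals. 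To repair your proposal you would need to replace the slice-comparison step by this push-pull construction (or prove an equivalent domination statement), at which point you are essentially carrying out the paper's proof.
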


\begin{rmk}
In this theorem we do not assume that $\nu$ is ergodic.
\end{rmk}

By Proposition \ref{prop conditionnelle induite par green}, the disintegration of $\hnu$ on $\hxi^u$ is induced by the Green current. We are going to prove that they are invariant by holonomy in the common Lyapunov chart $L$ in order to estimate the number of ``tubes'' of the form $L^s_n$, and, thereby, assure that $T^s_\hP$ has positive mass.

\subsection{Holonomy invariance in the common Lyapunov chart $L_i$}

Let $\Wr^s_L$ be the family of stable manifolds (which are pieces of complex manifolds) of points in $\hP $, i.e. $\Wr^s_L=\underset{\hp\in \hP}{\bigcup}W^s_{L}(\hp) $, let $\Dr^{horiz}$ be the set of horizontal disks in $L$ transverse to $\Wr^s_L$, i.e. a horizontal disk $D$ is in $\Dr^{horiz}$ iff $D$ intersects each $W^s\in \Wr^s_L$ in a unique point, and this intersection is transverse.
 By the Pesin theory, we have $\underset{\hp\in \hP}{\bigcup}W^u_{L}(\hp) \subset \Dr^{horiz}$.
Let $D$ and $D'$ be two disks in $\Dr^{horiz}$, denote by
$$X = \underset{W^s\in \Wr^s_L}{\bigcup} D\cap W^s \text{ and } X' = \underset{W^s\in \Wr^s_L}{\bigcup} D'\cap W^s$$
 We define the \textit{holonomy map} $$\hol:= \hol(D, D', \Wr^s_L):X \rightarrow X'$$
by $\hol(x)= W^s(x)\cap D'$, where $W^s(x)\in \Wr^s_L$ is the unique stable manifold containing $x$.

The holonomy map $\hol(D, D', \Fr^s)$ is well defined since $D$ and $D'$ are disks in $\Dr^{horiz}$ and $L$ is a common Lyapunov chart.

\begin{prop}\label{inv holonomie}
There is holonomy invariance in $L$, i.e.
for all disks $D,D'$ transverse to $W^s_{L}(\hP)$ we have  $\hol(T\intgeom [D]|_{X})=T\intgeom [D]|_{hol(X)}$.
\end{prop}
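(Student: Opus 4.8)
The plan is to show holonomy invariance of the slice measures $T\intgeom[D]$ by exploiting two facts: the Green current $T$ has continuous (indeed Hölder) local potentials, and holonomy along the stable lamination $\Wr^s_L$ is, in the common Lyapunov chart $L$, built out of the genuine (invertible) dynamics $f$, which preserves $T$. First I would reduce to a statement about a single stable manifold and its holonomy between two transverse horizontal disks. The key point is that the slice $T\intgeom[D]$ is, by \cite[Proposition 5.10]{FS2} and the discussion preceding \eqref{mesure positive}, a well-defined positive measure supported on $D\cap \Wr^s_L$, and it can be computed as $dd^c(u|_D)$ where $u$ is a local potential of $T$; thus the statement to prove is that pulling back $u|_{D'}$ by the holonomy map and comparing with $u|_D$ yields the same $dd^c$.

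The main step is a dynamical renormalization argument in the spirit of \cite{BLS}. Since $f:L(\hp)\to L(\hf(\hp))$ is horizontal-like of degree $1$ on each Lyapunov chart (or, after passing to the common charts $L_i$, the cut-off maps of \eqref{new def Ln}), one has for every $n$ a commuting diagram relating the holonomy $\hol(D,D',\Wr^s_L)$ to the holonomy $\hol(f^n D, f^n D', \Wr^s_{L_{k_n}})$ between the forward images, conjugated by the restrictions of $f^n$ to the horizontal disks $D$ and $D'$ — these restrictions are injective and biholomorphic onto their images by Proposition \ref{Pesin donne H like}. By \eqref{contraction}, the forward images $f^n(D)$, $f^n(D')$ and the stable pieces through them all get squeezed, in the renormalized coordinates of $L_{k_n}$, to within distance $O(Ce^{-n(\chi^s-\gamma)})$ of each other; so in those coordinates the holonomy map between $f^n D$ and $f^n D'$ converges to the identity. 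Combining this with $\tfrac1d f^*T=T$, i.e. $(f^n)^*T=d^nT$ and hence $(f^n)^*(T\intgeom[f^nD])=d^n\,T\intgeom[D]$ on the relevant piece, one transports the desired equality back: $\hol(T\intgeom[D]|_X)$ and $T\intgeom[D]|_{\hol(X)}$ are, up to the factor $d^n$, the pullbacks under $f^n$ of two slices that differ by a holonomy converging to the identity, hence their total masses — and then, testing against continuous functions and using continuity of the slices in the Lyapunov chart (Lemma \ref{def Rvarepsilon}), the measures themselves — agree in the limit $n\to\infty$.

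Concretely I would argue as follows. Fix a continuous test function $\varphi$ on $D'$ and set $\psi=\varphi\circ\hol$ on $X\subset D$; it suffices to show $\int \psi\, d(T\intgeom[D]) = \int \varphi\, d(T\intgeom[D'])$, i.e. $\int \varphi\,d\big(\hol_*(T\intgeom[D]|_X)\big)=\int\varphi\,d(T\intgeom[D']|_{\hol X})$. Push forward by $f^n$: using $(f^n)^*T = d^n T$ and that $f^n$ is a biholomorphism on $D$ and on $D'$ onto horizontal disks $\Delta_n=f^n(D)$, $\Delta'_n=f^n(D')$ in $L_{k_n}$, the left side equals $d^{-n}\int (\psi\circ f^{-n})\,d(T\intgeom[\Delta_n])$ and similarly for the right side with $\Delta'_n$, and the two off-set holonomy maps $\Delta_n\to\Delta'_n$ converge uniformly to the identity because of the exponential contraction \eqref{contraction} of stable manifolds. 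Since $T$ has continuous local potential and the slices $T\intgeom[\Delta_n]$ vary continuously with the (horizontal) disk in a common Lyapunov chart, the difference of the two integrals is $d^{-n}$ times a quantity that tends to $0$; passing to the limit gives equality. The main obstacle I anticipate is the bookkeeping around the non-invertibility of $f$: one must work with cut-off maps and verify that the restriction of $f^n$ to the pieces of horizontal disks actually lying over the shrinking stable tubes $L^s_n$ is injective and that no mass of the slice escapes — this is exactly where Proposition \ref{Pesin donne H like} and the definitions in \eqref{new def Ln} are used, and it is where one has to be careful that $\log\|d\hf^{-1}\|$ need not be integrable.
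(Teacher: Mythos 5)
Your renormalization set-up (commuting the holonomy with the graph transforms, using $\tfrac1d f^*T=T$, the contraction \eqref{contraction} to make the renormalized holonomy converge to the identity, and the cut-off maps of Proposition \ref{Pesin donne H like} to deal with non-invertibility) is the same skeleton as the paper's proof, but the concluding limit step — the actual content of the proposition — is circular as written. By the very change of variables you state, one has the exact identities $\int_X\psi\,d(T\intgeom[D])=d^{-n}A_n$ and $\int_{\hol(X)}\varphi\,d(T\intgeom[D'])=d^{-n}B_n$ for every $n$, with $A_n=\int(\psi\circ f^{-n})\,d(T\intgeom[\Delta_n])$ and $B_n=\int(\varphi\circ f^{-n})\,d(T\intgeom[\Delta'_n])$; hence the difference of the two sides equals $d^{-n}(A_n-B_n)$ identically, and asserting that this is ``$d^{-n}$ times a quantity tending to $0$'' is equivalent to what you must prove. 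Your justification does not deliver it: the renormalized slice measures $T\intgeom[\Delta_n]$ have mass of order $d^n$, and the integrands $\psi\circ f^{-n}$, $\varphi\circ f^{-n}$ oscillate at scale roughly $e^{-n\chi_u}$, so they are not equicontinuous; continuity of the potential of $T$ plus $\hol_n\to\mathrm{id}$ only gives weak closeness of the renormalized slices against \emph{fixed} test functions, which does not bound $A_n-B_n$ by $o(d^n)$. There is also a set-up problem: $f^n|_D$ is not a biholomorphism onto a horizontal disk of a common chart; only the cut-off piece $D\cap L^s_n(\hp)$ is, and the points of $X$ lie on stable manifolds with different itineraries and different return times, so no single time $n$ and chart $L_{k_n}$ accommodates your global push-forward.

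The paper closes this gap, following \cite{BLS}, by a local density argument rather than a global test-function limit: for $a\in X$ and return times $n$ with $\hf^n(\hp)\in\Rr_\varepsilon$ it proves a \emph{ratio} statement, $T\intgeom D(C^{\pm}_n(a))/T\intgeom D'(C'_n(a'))\to1$, where $C^{\pm}_n(a)$ and $C'_n(a')$ are the pullbacks by $f_{-n}$ of balls of fixed radius $r_0$ at the renormalized scale (Lemma \ref{equiv 4.3} and \eqref{equiv (4.11)}). The ratio formulation is what the continuous potential actually buys, and it requires the lower bound \eqref{mesure positive} on slice masses along returns to the Pesin set so the denominators do not degenerate; the Koebe distortion theorem shows the sets $C^{\pm}_n(a)$, $C'_n(a')$ are images of convex sets, so the Morse covering theorem \cite{morse} applies and converts the local ratio information into $T\intgeom D(E)=T\intgeom D'(E')$ for every compact $E\subset X$, whence the equality of measures. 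These ingredients — restriction to return times with the uniform mass lower bound, comparison of ratios rather than differences, Koebe distortion, and the covering theorem — are precisely what is missing from your argument, and without them the passage to the limit does not close.
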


\begin{proof}
We start with a local proof in a neighborhood of a point $\pi_0(\hp)$, where $\hp \in \hP$, and then we use a covering argument to obtain the full result.

Recall that $\hp\in\hP\subset \Rr_\varepsilon$.
Let $a,a'$ be the  intersection   points $\{a\}= W^s_{L}(\hp)\cap D$ and $a'=\hol(a)\in  W^s_{L}(\hp) \cap D'$, and, let $n$ be such that $\hf^n(\hp)\in \Rr_\varepsilon$.
Then there  exist $j$ such that $\hf^n(\hp) \in P_j$.
Denote by $D_n,D'_n\subset L_j$ the cut-off images of $D,D'$ by $f_n:L \rightarrow L_j$. 
The disks  $D_n,D'_n$ are transverse to $\Wr^s_{L,n}=f_n(\Wr^s_L)$ and the restriction of $f_n$ to $f_n^{-1}(D_n)$ (resp. $f_n^{-1}(D'_n)$)  admits an holomorphic inverse $f_{-n}$ with value in $D$ (resp. $D'$) such that $f_{-n}(D_n)=D\cap L^s_n(\hp)$ (resp. $f_{-n}(D_n')=D'\cap L^s_n(\hp)$), see Proposition \ref{Pesin donne H like}. We have $f_{-n}(D_n)= D\cap L^s_n(\hp)$ and   $f_{-n}(D')= D\cap L^s_n(\hp)$. 
Denote by $$\hol_n: X_n \rightarrow X'_n$$ the holonomy map between $X_n=D_n\cap \Wr^s_{L,n}=f_n(X)$ and $X'_n=D'_n\cap \Wr^s_{L,n}=f_n(X')$.\\

Through the end of this section, denote by $r=\delta(\hp)$ the ``size'' of the local stable and unstable manifolds, and $\lambda= \chi_s-\gamma$, see Theorem \ref{Pesin faible}.
We have the analogous of \cite[Lemme 4.1]{BLS}:
\begin{lemme}\label{equiv 4.1}
With the preceding notations, we have $\hol_n\circ f_n=f_n\circ \hol$ and for $r_0<r/4$ 
$$\hol_n(X_n\cap B(f_n(a),r_0-Ce^{-n\lambda})) \subset \hol_n(X_n)\cap B(f_n(a'),r_0) \subset \hol_n(X_n\cap B(f_n(a),r_0+Ce^{-n\lambda})).$$
\end{lemme}

Since $T$ has a continuous potential, we have:
\begin{lemme}\label{equiv 4.3}
If $r/8<r_0<r/4$ then there exists a sequence $(n_k)$ of integers such that
$$\lim \frac{T\intgeom D_{n_k} (B(f_{n_k}(a),r_0\pm Ce^{-{n_k}\lambda}))}{T\intgeom D'_{n_k} (B(f_{n_k}(a'),r_0\mp Ce^{-{n_k}\lambda}))}=1. $$
\end{lemme}

\begin{proof}
The proof follows from \cite[Lemma 4.2]{BLS} and \eqref{mesure positive}.
\end{proof}

To finish the proof of Proposition \ref{inv holonomie}, we construct a decreasing family of neighbourhood $C^{\pm}_n(a)$ and $C_n'(a')$ of $a\in D$ and $a'\in D'$, and use the preceding Lemma as Bedford-Lyubich-Smillie did in the proof of \cite[Lemma 4.4]{BLS}.

Since $\hp,\hf^n(\hp)\in\Rr_\varepsilon$, the disks $D,D'$ (resp. $D_n,D_n'$) are graphs above  disks of radius $r$ in $E^u(\hp)$ (resp. $E^u(\hf^n(\hp))$). 

Let $h$ (resp. $h'$) be  holomorphic functions such that $D$ (resp. $D'$) is the image by $h$ (resp. $h'$) of a flat disk.
Thanks to the Koebe  Distorsion theorem, replacing ``$f^{-n}: W^u_r(f^n(a))\rightarrow W^u_r(a)$'' by  $f_{-n}:D_n\rightarrow D$ (resp. $f_{-n}:D_n'\rightarrow D'$) in the proof of  \cite[Lemma 4.4]{BLS}, we get that $C^{\pm}_n(a)=f_{-n}(D_n\cap B(f_n(a),r_0\pm Ce^{-n\lambda}))$ (resp. $C_n'(a')=f_{-n}(D'_n\cap B(f_n(a'),r_0))$) is the image by $h$ (resp. $h'$) of a convex set.

And thanks to Lemma \ref{equiv 4.1}, we have $\hol (C^{-}_n(a)\cap X) \subset C_n'(a)\cap X' \subset \hol ( C^{+}_n(a)\cap X)  $.  
Since $f^n(D_n)=D$, we have  $T\intgeom [D]=T\intgeom f^n_*[D_n]=f^n_*(f^{n*}T\intgeom [D_n])=d^n f^n_*(T\intgeom [D_n])$ and we deduce from Lemma \ref{equiv 4.3} that 
\begin{equation}\label{equiv (4.11)}
\lim \frac{T\intgeom D (C^{\pm}_n(a))}{T\intgeom D' (C_n'(a))}=1. 
\end{equation}

Let $E\subset X$ be a compact set and $E'=\hol (E)$, so for every $\delta>0$ there exists an open set $O$ of $X$ such that $$T\intgeom D(O)\leq T\intgeom D(E)+\delta.$$
The set $\Cscr^{\pm}=\{C^{\pm}_n(a) \, | \, a\in X, n \text{ such that } \hf^n(\hp)\in \Rr_\varepsilon\}$ (resp. $\Cscr'=\{ C_n'(a')) \, | \, a'\in X', n \text{ such that } \hf^n(\hp)\in \Rr_\varepsilon\}$) is a neighbourhood basis of the points of $X$ in $D$ (resp. of $X'$ in $D'$) and the image by $h$ (resp. $h'$) of convex sets. By the  Morse cover theorem \cite{morse}, we deduce that there exists a family $\{ C'_j:j=1,2,\ldots \} \subset \Cscr'$  of non-overlapping open subsets of $D'$ such that
\begin{equation}\label{equiv (4.12)}
 T\intgeom D'\left(E'-\bigcup_{j} C'_j \right)=0.
\end{equation}
For every $j\in \N$, there exist $a'_j=\hol(a_j)$ and $n_j$ such that $C'_j=C'_{n_j}(a'_j)$ and the corresponding $C_j^-$ (i.e. $C_j^-=C_{n_j}(a_j)$) form also a  family of non-overlapping open subsets that belong to $\Cscr^-$ and satisfy $C_j\subset \hol^{-1}(C'_j)$. 
The diameter of the $C'_j$ can be chosen as small as wanted and $\hol^{-1}$ is continuous so we assume that $C_j^-\subset O $, thus $$T\intgeom D(O) \geq \sum_j T\intgeom D(C^-_j)$$
and, by \eqref{equiv (4.11)} and \eqref{equiv (4.12)}, we have 
$$T\intgeom D(O)\geq \dfrac{1}{1+\delta} \sum_j T\intgeom D'(C'_j) = \dfrac{1}{1+\delta} T\intgeom D'(E') .$$
For every $\delta>0$, we have $ T\intgeom D'(E') +\delta \geq \dfrac{1}{1+\delta} T\intgeom D'(E') $, thus
$$T\intgeom D(E)  \geq  T\intgeom D'(E').$$
Similarly, by covering $E$ by a family of $\Cscr^+$, we show that $T\intgeom D'(E')  \geq  T\intgeom D(E)$ ; and conclude that $T\intgeom D'(E')  =  T\intgeom D(E).$ 
This finish the proof Proposition \ref{inv holonomie}.
\end{proof}


\subsection{Number of non-overlapping tubes of the form $L^s_n(\hp)$ in $L$}

Up to reduce the size of the common  Lyapunov chart $L$, we assume that $\nu(\partial L)=0$.

\begin{lemme}
The set $\N_\hP=\lbrace n\in \N \,|\, \hnu (\hf^{-n}(\hP)\cap \hP) \geq \hnu(\hP)^2(1-\varepsilon) \rbrace$ is infinite.
\end{lemme}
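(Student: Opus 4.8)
The statement is a soft consequence of the von Neumann mean ergodic theorem applied to the indicator $\ind_{\hP}$, together with the fact that $\hnu(\hP)>0$ (recall a Pesin box has positive measure by definition). Since $\nu$ need not be ergodic (see the Remark following Theorem \ref{constr. TshP}), I would pass through the ergodic decomposition of the natural extension. Write $\hnu=\int \hnu_\omega\, \ds\Pb(\omega)$ for the ergodic decomposition of $(\hat{\proj}^2,\hf,\hnu)$; each $\hnu_\omega$ is $\hf$-invariant and ergodic, and all the quantities involved are bounded by $1$, so Fubini and dominated convergence will be available throughout.

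First I would fix the ergodic component $\hnu_\omega$ and apply the mean (von Neumann) ergodic theorem to $\phi=\ind_{\hP}\in L^2(\hnu_\omega)$: the Cesàro averages $\frac1N\sum_{n=0}^{N-1}\phi\circ\hf^{n}$ converge in $L^2(\hnu_\omega)$ to $\int\phi\,\ds\hnu_\omega=\hnu_\omega(\hP)$. Pairing with $\ind_{\hP}$ gives
$$
\frac1N\sum_{n=0}^{N-1}\hnu_\omega\bigl(\hP\cap\hf^{-n}(\hP)\bigr)\xrightarrow[N\to\infty]{}\hnu_\omega(\hP)^2 .
$$
Integrating this in $\omega$ against $\Pb$ and using dominated convergence yields
$$
\frac1N\sum_{n=0}^{N-1}\hnu\bigl(\hP\cap\hf^{-n}(\hP)\bigr)\xrightarrow[N\to\infty]{}\int \hnu_\omega(\hP)^2\,\ds\Pb(\omega)\ \geq\ \Bigl(\int \hnu_\omega(\hP)\,\ds\Pb(\omega)\Bigr)^2=\hnu(\hP)^2,
$$
the inequality being Cauchy--Schwarz (equivalently Jensen applied to $t\mapsto t^2$). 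Note that if one is willing to invoke ergodicity (resp. mixing) of $\nu$, which holds under \ref{cond on nu} (resp. \ref{cond forte on nu}) and transfers to $(\hat{\proj}^2,\hf,\hnu)$ since the natural extension has the same ergodic properties, this step is immediate and the decomposition is unnecessary.

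Finally I would argue by contradiction: suppose $\N_\hP$ is finite. Then there is $n_0$ such that $\hnu(\hP\cap\hf^{-n}(\hP))<\hnu(\hP)^2(1-\varepsilon)$ for every $n\geq n_0$, whence
$$
\limsup_{N\to\infty}\frac1N\sum_{n=0}^{N-1}\hnu\bigl(\hP\cap\hf^{-n}(\hP)\bigr)\ \leq\ \hnu(\hP)^2(1-\varepsilon).
$$
Since $\hnu(\hP)>0$ and $\varepsilon>0$, this contradicts the limit computed above, which is $\geq\hnu(\hP)^2>\hnu(\hP)^2(1-\varepsilon)$. Hence $\N_\hP$ is infinite (in fact it has density $1$ when $\nu$ is ergodic, and is cofinite when $\nu$ is mixing). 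There is no real obstacle here; the only point requiring a little care is the handling of the non-ergodic case, which the Cauchy--Schwarz step above resolves, and checking that $\hnu(\hP)$ is genuinely positive so that the factor $(1-\varepsilon)$ produces a strict gap.
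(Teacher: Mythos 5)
Your proof is correct and follows essentially the same route as the paper: assume $\N_\hP$ finite to bound the Ces\`aro averages above by $\hnu(\hP)^2(1-\varepsilon)$, then use the ergodic decomposition of $\hnu$, the mean ergodic theorem on each component, and convexity of $t\mapsto t^2$ (Jensen/Cauchy--Schwarz) to bound the same averages below by $\hnu(\hP)^2$, reaching a contradiction. The paper compresses the second step into a citation of de Th\'elin's argument, but the content is identical to yours.
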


\begin{proof}
Assume that $\N_\hP$ is finite, then
 $$\frac{1}{m}\underset{n=0}{\overset{m-1}{\sum}} \hnu (\hf^{-n}(\hP)\cap \hP) \leq \frac{1}{m}\underset{\underset{n\in \N_\hP}{n=0}}{\overset{m-1}{\sum}} \hnu (\hf^{-n}(\hP)\cap \hP) + \hnu(\hP)^2(1-\varepsilon), $$
so $$\underset{m\rightarrow \infty}{\limsup}\frac{1}{m}\underset{n=0}{\overset{m-1}{\sum}} \hnu (\hf^{-n}(\hP)\cap \hP) \leq \hnu(\hP)^2(1-\varepsilon).$$
On the other hand, as in the proof of  \cite[Lemma 5.1.]{deT:selles}, we may  decompose $\hnu$ in ergodic measures to get that
$$\hnu(\hP)^2 \leq \underset{m\rightarrow \infty}{\limsup}\frac{1}{m}\underset{n=0}{\overset{m-1}{\sum}} \hnu (\hf^{-n}(\hP)\cap \hP)$$
In fact, this is true for   ergodic measures and $x\mapsto x^2$ is convex.
We rich a contradiction.
\end{proof}

Notice that, for every $\hp\in \hf^{-n}(\hP)\cap \hP$, Proposition \ref{inv holonomie} is true by replacing $L(\hp)$ by $L^s_n(\hp)$ 

In fact, for every $\hq\in \pi_0^{-1}(L^s_n(\hp))\cap \hP$ the local stable manifold $W^s_L(\hq)$ is  included in  $L^s_n(\hp)$, thereby we have holonomy invariance.

Recall the following notations: for every $\hp\in\hR$,  $f_{-n}$ denote the cut-off map on a horizontal disk of $L(\hp)$ to a horizontal disk of $L^u_n(\hf^{-n}(\hp))$ and for every subset $F$ of a Pesin box $W^{s/u}(F)$ denote $W^{s/u}(F)=\underset{x\in F}{\bigcup}W^{s/u}(x)$.\\

\begin{lemme}\label{lemme Lsn= compo connex}
The  connected component of $L\cap f^{-n}(L)$ which contained  $\pi_0(\hp)$ is $L^s_n(\hp)$.
In particular, if $\hp,\hq\in \hP\cap f^{-n}(\hP) $ and $L^s_n(\hp)\cap L^s_n(\hq)$ is non-empty then $L^s_n(\hp) = L^s_n(\hq)$.
 \end{lemme}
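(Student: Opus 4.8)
The plan is to exploit the horizontal-like structure of the graph transform $f_n$ on Lyapunov charts, together with the uniqueness of the local stable manifold through a point, to show that each connected component of $L\cap f^{-n}(L)$ containing $\pi_0(\hp)$ is exactly a tube $L^s_n(\hp)$.

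First I would recall the definitions. By construction, $L^s_n(\hp)=\{y\in L : f^j(y)\in L_{k_j}\text{ for }1\le j\le n\}$ (in the ``common Lyapunov chart'' form of \eqref{new def Ln}, but here with all charts taken equal to $L$), which by Proposition \ref{Pesin donne H like} and the subsequent propositions is a horizontal-like subset of $L$: precisely, $f_n$ restricted to $L^s_n(\hp)$ is a horizontal-like map of degree $1$ onto a subdomain of $L$, and $L^s_n(\hp)$ is the domain of definition of the $n$-th cut-off iterate. In particular $L^s_n(\hp)\subset L\cap f^{-n}(L)$ and $L^s_n(\hp)$ is connected (it retracts onto the horizontal disk $W^u_{L_n}$ under the graph transform, or one sees connectedness directly since it is cut out by analytic inequalities defining a horizontal-like region, which is biholomorphic to a bidisk-like set). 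So $L^s_n(\hp)$ is contained in the connected component $Z$ of $L\cap f^{-n}(L)$ through $\pi_0(\hp)$, and it remains to prove the reverse inclusion $Z\subset L^s_n(\hp)$.

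For the reverse inclusion, I would argue by induction on $n$. For $n=1$: if $y\in L\cap f^{-1}(L)$ lies in the same connected component of $L\cap f^{-1}(L)$ as $\pi_0(\hp)$, then in particular $f(y)\in L$ (and $f(y)\in L_{k_1}$ since $L_{k_1}$ is the chart containing $p_1=f(\pi_0(\hp))$ and, shrinking $\eta$, the relevant charts are the designated ones), so $y\in L^s_1(\hp)$ by definition. For the inductive step, suppose the claim holds for $n-1$, and let $y$ be in the connected component of $L\cap f^{-n}(L)$ containing $\pi_0(\hp)$. Since $L\cap f^{-n}(L)\subset L\cap f^{-(n-1)}(L)$, the component of $y$ in the latter set is the one through $\pi_0(\hp)$, so by induction $y\in L^s_{n-1}(\hp)$, i.e.\ $f^j(y)\in L_{k_j}$ for $1\le j\le n-1$. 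The graph transform $f_{n-1}$ maps $L^s_{n-1}(\hp)$ into $L_{k_{n-1}}$; applying $f$ once more, $f^n(y)\in L$ by hypothesis, so $f^n(y)$ is in the chart $L_{k_n}$ (again by the shrinking-of-$\eta$ convention), whence $y\in L^s_n(\hp)$. This gives $Z=L^s_n(\hp)$.

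The ``in particular'' clause then follows: if $\hp,\hq\in\hP\cap f^{-n}(\hP)$ and $L^s_n(\hp)\cap L^s_n(\hq)\neq\emptyset$, pick $w$ in the intersection. Both $L^s_n(\hp)$ and $L^s_n(\hq)$ are connected subsets of $L\cap f^{-n}(L)$ containing $w$, hence both lie in the connected component $Z_w$ of $L\cap f^{-n}(L)$ through $w$; but by what we just proved, each equals that component (the component through any of its points), so $L^s_n(\hp)=Z_w=L^s_n(\hq)$. The main obstacle I anticipate is the bookkeeping about which Lyapunov charts $L_{k_j}$ are used: one must verify that the designated charts $L_{k_j}$ (those containing $p_j=f^j(\pi_0(\hp))$) are the \emph{only} charts a point of the connected component can pass through — this is where one uses that $\diam(\pi_0(\hP_i))<\eta$ with $\eta$ small, so that the itineraries stay locked to the chosen charts, and that the charts in the finite family are essentially disjoint in the relevant sense. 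Once that is pinned down, the connectedness argument is routine.
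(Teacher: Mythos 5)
Your reduction of the ``in particular'' clause to the main claim is fine, and your observation that $L^s_n(\hp)$ is a connected subset of $L\cap f^{-n}(L)$ (so that one inclusion is easy) matches the paper. But the proof of the hard inclusion $Z\subset L^s_n(\hp)$ has a genuine gap: your induction step rests on the inclusion $L\cap f^{-n}(L)\subset L\cap f^{-(n-1)}(L)$, which is false. Membership in $f^{-n}(L)$ says $f^n(y)\in L$ and says nothing about $f^{n-1}(y)$; indeed for $\hp\in\hP\cap\hf^{-n}(\hP)$ the intermediate iterates $f^j(p)$ typically lie in \emph{other} charts $L_{k_j}\neq L$ (your simplification ``with all charts taken equal to $L$'' is not available), so $L\cap f^{-(n-1)}(L)$ is not even the right set to induct on. The whole content of the lemma is to exclude the dangerous scenario that a point $y\in L$ with $f^n(y)\in L$, joined to $\pi_0(\hp)$ inside $L\cap f^{-n}(L)$, escapes the designated charts at some intermediate time and later re-enters $L$; your argument never addresses this, and your closing paragraph only flags it as ``bookkeeping'' without supplying the mechanism that rules it out.

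The paper's proof supplies exactly that mechanism, by a path argument rather than induction: take a path $\rho$ in $L\cap f^{-n}(L)$ from $\pi_0(\hp)$ to a putative point $q\notin L^s_n(\hp)$, and let $i$ be the first time with $f^i(q)\notin L(\hf^i(\hp))$. Then $f^i\circ\rho$ must cross the \emph{vertical} side of $L(\hf^i(\hp))$, and because $f$ is a horizontal-like map of degree $1$ between consecutive Lyapunov charts (horizontally expanding, so the image of the vertical boundary is pushed outside the horizontal extent of the next chart and stays out under further iteration), the path $f^n\circ\rho$ must cross the vertical side of $L(\hf^n(\hp))=L$. This contradicts $f^n\circ\rho\subset L$, i.e.\ $\rho\subset f^{-n}(L)$. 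To repair your write-up you would need to replace the false monotonicity of the sets $L\cap f^{-j}(L)$ by this kind of quantitative ``once out through the vertical side, forever out horizontally'' statement, which is where Proposition \ref{Pesin donne H like} is actually used.
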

 
 \begin{proof} 
Assume that  there exists a point $q$ which is not in  $L^s_n(\hp)$ but is in the connected component of $L\cap f^{-n}(L)$ which contained $\pi_0(\hp)$.
The connected component of $L\cap f^{-1}(L)$ is connected by arcs and contains $L^s_n(\hp)$.
Let $\rho$ be a path from $q$ to $\pi_0(\hp)$ in $L\cap f^{-n}(L)$.
Let $i$ be the smallest positive integer such that $f^i(q)\notin L(\hf^i(\hp))$. 
Since $f$ is a horizontal-like map of degree 1 between the Lyapunov charts, the path $f^i\circ \rho$ cross the  vertical side of $L(\hf^i(\hp))$ and so the path $f^n\circ \rho$ cross the  vertical side of  $L(\hf^n(\hp))=L$. 
This contradicts the fact that $\rho$ is a path in $L\cap f^{-n}(L)$.
 \end{proof}

\begin{lemme}\label{majoration nu_{|L^s_n}}
For every Pesin box $\hP$, $n\in \N$ and $\hp\in \hf^{-n}(\hP)\cap \hP$  we have
$$\hnu\left(\pi_0^{-1}(L^s_n(\hp))\bigcap \hP\bigcap \hf^{-n}(\hP)\right) \leq d^{-n} \hnu(\hP). $$
\end{lemme}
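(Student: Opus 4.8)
The estimate quantifies how the measure $\hnu$ shrinks under the dynamics when restricted to a single stable tube $L^s_n(\hp)$, and the natural tool is the disintegration of $\hnu$ along unstable manifolds combined with the fact that the conditionals are induced by the Green current $T$ and that $T$ multiplies by $d$ under pullback. First I would fix a Pesin box $\hP$, an integer $n\in\N$, and $\hp\in\hf^{-n}(\hP)\cap\hP$, and set $A=\pi_0^{-1}(L^s_n(\hp))\cap\hP\cap\hf^{-n}(\hP)$. The point is that $\hf^n(A)$ is contained in $\hP$ and, more precisely, $\pi_0(\hf^n(A))$ is contained in the common Lyapunov chart $L$ (it is $f^n$ applied to the tube $L^s_n(\hp)$, which lands in $L$ by construction of the cut-off maps). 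So $A$ is the preimage under $\hf^n$, inside $\hP$, of a piece of $\hP$, and I want to say that this preimage has $\hnu$-mass at most $d^{-n}$ times the mass of the target.

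\textbf{Key steps.} The estimate is really a fiberwise statement. Disintegrate $\hnu$ over the partition $\hxi^u$ subordinate to unstable manifolds. For $\hm$-a.e.\ $\hx$, by Proposition~\ref{prop conditionnelle induite par green} the conditional measure $\hnu(\cdot|\hxi^u(\hx))$ is the normalization of $(\pi_0^{-1})_*(T\intgeom[D^u(\hx)])$, where $D^u(\hx)=\pi_0(\hxi^u(\hx))$. Restricted to a single unstable plaque $D^u(\hx)$, intersecting with $L^s_n(\hp)$ corresponds (via $\pi_0$) to intersecting the disk $D^u(\hx)$ with the tube $L^s_n(\hp)$, i.e.\ to the portion of $D^u(\hx)$ lying in $L\cap f^{-n}(L)$, which maps biholomorphically under $f^n$ onto a horizontal disk in $L$. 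Using $T\intgeom[f^n(D)]=d^n\,f^n_*(T\intgeom[D])$ (the same push-pull identity used in the proof of Proposition~\ref{inv holonomie}, coming from $\frac1d f^*T=T$), the $T$-mass carried by the part of $D^u(\hx)$ inside $L^s_n(\hp)$ equals $d^{-n}$ times the $T$-mass carried by its $f^n$-image inside $L$. Hence, fiberwise, $\hnu(A\cap\hxi^u(\hx)) \le d^{-n}\,\hnu(\hf^n(A)\cap\hxi^u(\hf^n(\hx)))$, up to controlling the normalizing constants $\rho(\hx)$ versus $\rho(\hf^n(\hx))$; the monotonicity $\hxi^u(\hf^{-1}(\hxi^u(\hf(\hx))))\subset\hxi^u(\hx)$ established in Proposition~\ref{prop existence partition sub à W^u}, together with the relation $\rho(\hf(\hx))\le d\,\rho(\hx)$ proved inside Proposition~\ref{prop conditionnelle induite par green}, gives exactly the needed comparison. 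Then integrate over $\hx$ against $\hnu$, using $\hf$-invariance of $\hnu$ to replace the integral of the fiber masses of $\hf^n(A)$ by $\hnu(\hf^n(A))\le\hnu(\hP)$, and conclude $\hnu(A)\le d^{-n}\hnu(\hP)$.

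\textbf{Main obstacle.} The delicate point is the bookkeeping of the normalization constants and the compatibility between the partition $\hxi^u$ and the tube structure $L^s_n(\hp)$: one must know that a tube $L^s_n(\hp)$ is saturated by unstable plaques in an appropriate sense, so that the fiberwise estimate assembles correctly after integration, and that $\hf^{-n}\hxi^u(\hf^n(\hx))\subset\hxi^u(\hx)$ lets us write the restriction to $A$ as a union over sub-plaques exactly as in the telescoping computation at the end of Proposition~\ref{prop conditionnelle induite par green}. I would model this part closely on that computation and on the corresponding Jacobian estimate of Bedford--Lyubich--Smillie; once the $f^n$-image of the tube is correctly identified with a horizontal disk in $L$ and the push-pull identity for $T\intgeom[D]$ is invoked, the factor $d^{-n}$ is forced, and the inequality (rather than equality) comes precisely from discarding the part of $\hnu(\hP)$ not in the image $\hf^n(A)$.
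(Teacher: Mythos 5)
There is a genuine gap at your fiberwise step. The inequality you assert, $\hnu\bigl(A\,|\,\hxi^u(\hx)\bigr)\le d^{-n}\,\hnu\bigl(\hf^n(A)\,|\,\hxi^u(\hf^n(\hx))\bigr)$, compares conditional measures at two \emph{different} points, hence with two different normalizations $\rho(\hx)=M\bigl(T\intgeom[D^u(\hx)]\bigr)$ and $\rho(\hf^n(\hx))$. What the push--pull identity actually gives, for a piece $E\subset\hxi^u(\hf^n(\hx))$, is $\hnu\bigl(\hf^{-n}(E)\,|\,\hxi^u(\hx)\bigr)=d^{-n}\,\tfrac{\rho(\hf^n(\hx))}{\rho(\hx)}\,\hnu\bigl(E\,|\,\hxi^u(\hf^n(\hx))\bigr)$; the conditional Jacobian is $q_n(\hx)=d^{-n}\rho(\hf^n(\hx))/\rho(\hx)$. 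The bound $\rho(\hf(\hx))\le d\,\rho(\hx)$ that you invoke only yields $q_n(\hx)\le 1$ — it goes the wrong way for your purpose, and there is no pointwise control of $\rho(\hf^n(\hx))/\rho(\hx)$ (only an averaged one, $\int\log q_1\,\ds\hnu=-\log d$). Moreover, since plaques expand ($\hxi^u(\hf^n(\hx))\subset\hf^n(\hxi^u(\hx))$, not the reverse), $\hf^n$ need not send $A\cap\hxi^u(\hx)$ into the plaque $\hxi^u(\hf^n(\hx))$ at all, so the right-hand side of your fiberwise inequality can miss part of the relevant mass; and the disk $f^n\bigl(D^u(\hx)\cap L^s_n(\hp)\bigr)$ is a horizontal disk of $L$ that is neither $D^u(\hf^n(\hx))$ nor comparable to it in mass without further input. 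At best, with uniform bounds on $\rho$ and on transverse masses of horizontal disks in $L$, your scheme gives $\hnu(A)\le C\,d^{-n}$ with an uncontrolled constant, which is too weak for the periodic-point count in Section \ref{section equidistribution}.

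The paper's proof avoids comparing two normalizations precisely through the holonomy invariance along $\Wr^s_L$ (Proposition \ref{inv holonomie}), which you never use, combined with Lemma \ref{lemme Lsn= compo connex} (a stable plaque of $\hP$ is either contained in the tube $L^s_n(\hp)$ or disjoint from it). With $D=W^u_L(\hp)$ and $D_n=W^u_L(\hf^n(\hp))=f^n\bigl(D\cap L^s_n(\hp)\bigr)$, one has $f^n_*\bigl(T\intgeom[D\cap L^s_n(\hp)]\bigr)=d^{-n}\,T\intgeom[D_n]$, and holonomy invariance identifies $T\intgeom[D_n]|_{\Wr^s_L}$ with the holonomy image of $T\intgeom[D]|_{\Wr^s_L}$; hence the transverse mass of the tube-part of $D$ lying over $\pi_0(\hP\cap\hf^{-n}(\hP))$ is at most $d^{-n}$ times the mass of the \emph{same} disk over $\pi_0(\hP)$. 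Both sides are then divided by the same normalization, so by Proposition \ref{prop conditionnelle induite par green} one gets $\hnu\bigl(A\,|\,\hxi^u(\hx)\bigr)\le d^{-n}\hnu\bigl(\hP\,|\,\hxi^u(\hx)\bigr)$ with both conditionals at the same $\hx$, and integration gives the lemma with the clean constant $\hnu(\hP)$. Any repair of your argument must import Proposition \ref{inv holonomie} at the point where you compare the image disk's transverse measure with that of the original plaque; the invariance-plus-integration trick alone cannot produce the factor $d^{-n}$.
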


\begin{proof}
Denote $D, D_{n}$ the horizontal disks $D= W^u_L(\hp)$ and $D_{n}=W^u_L(\hf^n(\hp))$.
Since $\hp\in \hf^{-n}(\hP)\cap \hP$, we have $f^n(D\cap L^s_n(\hp))=D_n$ so 
$$f^n_*\left( T\intgeom [D\cap L^s_n(\hp)] \right)= \dfrac{1}{d^n} T\intgeom f^n_*[D\cap L^s_n(\hp)]  =  \dfrac{1}{d^n}T\intgeom [D_{n}] .$$
By Lemma \ref{lemme Lsn= compo connex}, for every $\hq\in \hP$ either $W^s_L(\hq)\cap L^s_n(\hp)= W^s_L(\hq)$ or $W^s_L(\hq)\cap L^s_n(\hp)= \emptyset$.
Thus, by Proposition \ref{inv holonomie}, we have
$T\intgeom [D_{n}]|_{\Wr^s_L }= \hol_{D,D_n \, *} \left( T\intgeom [D]|_{\Wr^s_L }\right),$
and 
$$f^n_*\left( T\intgeom [D\cap L^s_n(\hp)] \right)|_{\Wr^s_L }= d^{-n} \hol_{D,D_n \, *} \left( T\intgeom [D]|_{\Wr^s_L }\right). $$ 
Since $\pi_0\left(\hP\bigcap \hf^{-n}(\hP)\right)= 
\pi_0\left(\hf^{-n}\left(\hP\cap \hf^n(\hP)\right)\right) \subset f^{-n}\left(\pi_0(\hP\bigcap \hf^{n}(\hP))\right) $, we have
\begin{eqnarray*}
T\intgeom [D]|_{\Wr^s_L }\left(\pi_0\left(\hP\bigcap \hf^{-n}(\hP)\right) \bigcap L^s_n(\hp) \right)
&\leq &T\intgeom [D]|_{\Wr^s_L }\left( f^{-n}\left(\pi_0(\hP\bigcap \hf^{n}(\hP))\right) \bigcap L^s_n(\hp)\right)\\
&\leq & f^n_*\left( T\intgeom [D\cap L^s_n(\hp)] \right)|_{\Wr^s_L } \left(\pi_0(\hP\bigcap \hf^{n}(\hP))\right)\\
&\leq &  d^{-n} \hol_{D,D_n \, *} \left( T\intgeom [D]|_{\Wr^s_L }\right)\left(\pi_0(\hP\bigcap \hf^{n}(\hP))\right) \\
&\leq & d^{-n} T\intgeom [D](\pi_0(\hP)).
\end{eqnarray*}
Notice that $\hol_{D,D_n}^{-1}\left(\pi_0(\hP\bigcap \hf^{n}(\hP))\right)$ is included in $ D\cap \pi_0(\hP)$ but is not necessarily included in $ D\cap \pi_0(\hP\cap f^{-n}(\hP))$.

The conditionals of $\hnu$ with respect to $\hxi^u$ are induced by $T$, see Proposition \ref{prop conditionnelle induite par green}, thus for every $n\in \N_\hP$ and every $\hp\in \hf^{-n}(\hP)\cap \hP$, we have
$$\hnu\left(\pi_0^{-1}\left(L^s_n(\hp)\right)\bigcap \hP\bigcap \hf^{-n}(\hP)\,\left|\, \hxi^u(\hp) \right. \right) 
\leq d^{-n} \hnu(\hP\,|\, \hxi^u(\hp) ), $$ 
and, thereby, $\hnu\left(\pi_0^{-1}(L^s_n(\hp))\bigcap \hP\bigcap \hf^{-n}(\hP) \right) \leq d^{-n} \hnu(\hP). $
\end{proof}

\begin{prop}\label{nb de tubes}
For every Pesin box $\hP$ and every $n\in \N_\hP$, there is at least $ d^n\hnu(\hP)(1-\varepsilon)$ non-overlapping tubes  of the form $L^s_n(\hp)$ with $\hp\in \hf^n(\hP)\cap \hP$.
\end{prop}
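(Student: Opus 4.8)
The plan is to combine the upper bound from Lemma~\ref{majoration nu_{|L^s_n}} with a lower bound on the total $\hnu$-mass of points that recur under $\hf^n$, both measured inside $\hP$. Fix $n\in\N_\hP$. By definition of $\N_\hP$ we have $\hnu\!\left(\hf^{-n}(\hP)\cap\hP\right)\geq \hnu(\hP)^2(1-\varepsilon)$. By Lemma~\ref{lemme Lsn= compo connex}, the sets $L^s_n(\hp)$ for $\hp\in\hf^{-n}(\hP)\cap\hP$ are pairwise either equal or disjoint, so the family of distinct tubes $\{L^s_n(\hp_1),\dots,L^s_n(\hp_N)\}$ that meet $\pi_0\!\left(\hf^{-n}(\hP)\cap\hP\right)$ is a genuine non-overlapping family, and their $\pi_0$-preimages cover $\hf^{-n}(\hP)\cap\hP$ up to the part of $\hP$ whose stable manifold does not meet any such tube — but since every point of $\hf^{-n}(\hP)\cap\hP$ lies on its own stable manifold, which lies in exactly one tube, the cover is exact on $\hf^{-n}(\hP)\cap\hP$.

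First I would write, using this partition of $\hf^{-n}(\hP)\cap\hP$ into the pieces $\pi_0^{-1}(L^s_n(\hp_j))\cap\hP\cap\hf^{-n}(\hP)$,
\[
\hnu\!\left(\hf^{-n}(\hP)\cap\hP\right)=\sum_{j=1}^{N}\hnu\!\left(\pi_0^{-1}(L^s_n(\hp_j))\cap\hP\cap\hf^{-n}(\hP)\right).
\]
Then I would bound each summand from above by $d^{-n}\hnu(\hP)$ using Lemma~\ref{majoration nu_{|L^s_n}}, which gives
\[
\hnu(\hP)^2(1-\varepsilon)\leq \hnu\!\left(\hf^{-n}(\hP)\cap\hP\right)\leq N\,d^{-n}\hnu(\hP),
\]
hence $N\geq d^{n}\hnu(\hP)(1-\varepsilon)$. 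Finally, I would note that each distinct tube $L^s_n(\hp_j)$ with $\hp_j\in\hf^{-n}(\hP)\cap\hP$ equals $L^s_n(\hf^{-n}(\hq_j))$ for some $\hq_j\in\hf^n(\hP)\cap\hP$ (apply $\hf^n$), or — matching the statement's phrasing — simply relabel so the base points are taken in $\hf^n(\hP)\cap\hP$ via the identification $L^u_n(\hp)=\hf^n L^s_n(\hf^{-n}(\hp))$; in any case the count $N\geq d^n\hnu(\hP)(1-\varepsilon)$ is unchanged. This is the claimed lower bound on the number of non-overlapping tubes.

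The only genuinely delicate point is the exactness of the covering step: one must be sure that $\hnu$-almost every point of $\hf^{-n}(\hP)\cap\hP$ lies on a local stable manifold $W^s_L(\hp)$ that is entirely contained in a single tube $L^s_n(\hp')$, so that no $\hnu$-mass escapes between the tubes and the sum above is an equality rather than merely $\leq$. This follows from Lemma~\ref{lemme Lsn= compo connex} together with the fact, noted after the statement of that lemma, that for $\hq\in\pi_0^{-1}(L^s_n(\hp))\cap\hP$ the whole manifold $W^s_L(\hq)$ lies in $L^s_n(\hp)$ (this is exactly what makes the holonomy argument of Proposition~\ref{inv holonomie} applicable on each tube); combined with the fact that $\hnu$-a.e.\ point of a Pesin box sits on its own local stable manifold inside $L$, every point of $\hf^{-n}(\hP)\cap\hP$ is accounted for in precisely one tube. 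Everything else is bookkeeping with the two inequalities above.
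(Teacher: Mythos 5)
Your proposal is correct and follows essentially the same route as the paper: the recurrent set $\hP\cap\hf^{-n}(\hP)$ is decomposed into the pieces $\pi_0^{-1}(L^s_n(\hp))\cap\hP\cap\hf^{-n}(\hP)$, each of $\hnu$-mass at most $d^{-n}\hnu(\hP)$ by Lemma \ref{majoration nu_{|L^s_n}}, the total mass is at least $\hnu(\hP)^2(1-\varepsilon)$ by definition of $\N_\hP$, and Lemma \ref{lemme Lsn= compo connex} guarantees the tubes are pairwise disjoint or equal, yielding the count $d^n\hnu(\hP)(1-\varepsilon)$. Your extra discussion of the exactness of the covering is not really needed (each point lies in its own tube, and only the covering inequality is used), but it is harmless.
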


\begin{proof}
For every $\hp\in \hP\cap \hf^{-n}(\hP)$ we have $\hnu\left(\pi_0^{-1}(L^s_n(\hp))\cap \hP\cap \hf^{-n}(\hP)\right) \leq d^{-n} \hnu(\hP)$. Since 
$$\hP\cap \hf^{-n}(\hP)=\underset{\hp\in \hP}{\bigcup}\pi_0^{-1}\left(L^s_n(\hp)\right)\cap \hP\cap \hf^{-n}(\hP),$$
and for every $n\in \N_\hP$, we have $\hnu(\hP\cap \hf^{-n}(\hP))\geq \hnu(\hP)^2(1-\varepsilon)$, we conclude with Lemma \ref{lemme Lsn= compo connex}.
\end{proof}


\subsection{Proof of Theorem \ref{constr. TshP} and a corollary}
Recall that $\hP$ is a Pesin box, $L$ is a common  Lyapunov chart of $\hP$ and for every $ \hp \in \hP$,  $L^s_n(\hp)$ and $L^u_n(\hp)$ are defined by \eqref{new def Ln}.

\begin{proof}[Proof of Theorem \ref{constr. TshP}] 
Let $l$ be a  transverse line to  $W^u_{L}(\hP)$ (i.e. to $W^u_{L}(\hp)$ for every $\hp\in \hP$) such that the disk $\Delta=l\cap L$ is vertical.
We may chose $l$ such that $\frac{1}{d^n}f^n_*[l]\rightarrow T$.

Fix $\varepsilon>0$ and $n\in \N_\hP$. 
For every $\hp\in \hf^{-n}(\hP)\cap \hP$, we have $L(\hp)=L(\hf^n(\hp))=L$, so $\Delta$ can be seen as a  vertical disk of $L(\hf^n(\hp))$.
Denote by  $\Delta_{n,\hp}$ the cut-off preimage of $\Delta$ in $ L^s_n(\hp)$, i.e. $\Delta_{n,\hp}=f^{-1}_{n,\hp}(\Delta)$ where $f_{n,\hp}: L^s_n(\hp) \rightarrow L$. 
By abuse of notation, denote by $f_n^*[\Delta]$ the current $$f_n^*[\Delta]:= \underset{\hp \in \hf^{-n}(\hP)\cap \hP}{\sum} f_{n,\hp}^*[\Delta].$$
By definition of $f_{n,\hp}: L^s_n(\hp) \rightarrow L$, we have $\frac{1}{d^n} f_n^*[\Delta]\leq \frac{1}{d^n}f^{n*}[l]$.
By Proposition \ref{nb de tubes}, $\bigcup_{\hp\in \hP\cap\hf^{-n}(\hP)} \Delta_{n,\hp}$ contains at least  $ d^n\hnu(\hP)(1-\varepsilon)$ disjoint disks, so 
all cluster values of $\frac{1}{d^n} f_n^*[\Delta]$ is a non trivial positive current, uniformly laminar and is smaller than $ T$.
By construction, they are closed in $L$ and subordinate to $W^s_L(\hP)$.
In fact, $\supp(f^*_{n,\hp}[\Delta])=\Delta_{n,\hp} \subset L^s_n(\hp)$ and  $L^s_n(\hp)$ converges exponentially fast to $ W^s_L(\hp)$.

Thereby, all cluster values of $\frac{1}{d^n} f_n^*[\Delta]$ intersect correctly (in $L$) and are bounded from above by $T$, so the supremum of these cluster values, denoted by $T^s_\hP$, is a well defined laminar current subordinate to $W^s_L(\hP)$, see \cite[Lemma 6.12]{BLS}.
For all $\hp\in \hP$ there exist  infinitely many  $n\in \N$ such that $\hf^n(\hp)\in \hP$, so $W^s_L(\hp)$ is included in the support of at least one cluster value of $\frac{1}{d^n} f_n^*[\Delta]$, thus $W^s_L(\hp)\subset \supp(T^s_\hP)$.
\end{proof}

We deduce the following corollary, see \cite[Lemma 8.2.]{BLS}.

\begin{coro}
There exists a continuous psh function $u^s_\hP$ defined on $L$ such that for all cut-off function  $\chi$ with support in $L$ we have $\chi T^s_\hP=\chi \dd^c u^s_\hP$.

The product $ T^s_\hP\intgeom T^u$ is well defined in $L$ and $M(T^s_\hP\intgeom T^u)>0$.
\end{coro}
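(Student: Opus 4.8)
The plan is to establish the corollary in two steps, corresponding to its two assertions. First I would produce the continuous psh potential $u^s_\hP$. Since $T^s_\hP$ is a laminar current with $T^s_\hP \leq T$ and $T$ has a continuous (indeed bounded) global potential, and $T^s_\hP$ is closed in $L$ by Theorem \ref{constr. TshP}, one can write $T^s_\hP = \ddc u$ on the bidisk $L$ for some psh function $u$. The point is that $u$ may be taken continuous: this is where I would invoke the argument of \cite[Lemma 8.2]{BLS}. The mechanism is that $T^s_\hP$ is built (Theorem \ref{constr. TshP}) as a non-decreasing limit, along $n \in \N_\hP$, of the normalized currents $\frac{1}{d^n} f_n^*[\Delta]$, each of which is uniformly laminar by disks that are cut-off preimages of the fixed vertical disk $\Delta$; the potentials of these approximants are, by the usual slicing/averaging estimate for uniformly laminar currents subordinate to a fixed transversal, uniformly bounded and equicontinuous, so a subsequential limit gives a continuous psh $u^s_\hP$ on $L$ with $\ddc u^s_\hP = T^s_\hP$ in $L$; multiplying by a cut-off $\chi$ supported in $L$ gives $\chi T^s_\hP = \chi \ddc u^s_\hP$. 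The domination $T^s_\hP \leq T$ together with continuity of the potential of $T$ is what keeps everything bounded.

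Second, I would use this continuous potential to make sense of $T^s_\hP \intgeom T^u$ and bound its mass from below. Since $u^s_\hP$ is a continuous psh function on $L$ and $T^u$ is a positive closed $(1,1)$-current (restricted to $L$), the product $T^s_\hP \wedge T^u$ is well defined in $L$ as $\ddc u^s_\hP \wedge T^u$ (Bedford–Taylor). On the other hand, $T^s_\hP$ is uniformly laminar in $L$, subordinate to $W^s_L(\hP)$, and $T^u$ restricted to $L$ is, by condition \ref{cond CV} / the dynamics, a positive closed current whose slices by the horizontal disks $W^u_L(\hp)$ are exactly the conditionals appearing in \eqref{mesure positive}; so the geometric intersection $T^s_\hP \intgeom T^u$ of Definition \ref{def int geom 2} is also well defined, and by the proposition following Definition \ref{def int geom 2} (which itself rests on \cite[Lemma 2.7]{DDG2}) the two notions agree. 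Concretely, writing $T^s_\hP = \int [V_a] \, d\lambda(a)$ as an integral over stable disks $V_a \subset W^s_L(\hP)$, one has
\[
T^s_\hP \intgeom T^u = \int \big( T^u \wedge [V_a] \big) \, d\lambda(a),
\]
and each $V_a$ is a vertical disk in $L$ transverse to every horizontal disk, in particular transverse to the unstable manifolds carrying the mass of $T^u$; so each $T^u \wedge [V_a]$ is a nonzero positive measure.

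To get strict positivity of the total mass, I would argue as follows. By Theorem \ref{constr. TshP}, $\lambda$ has positive total mass, and the set of $a$ for which $V_a$ contains a local stable manifold $W^s_L(\hp)$ with $\hp$ in the Pesin box $\hP$ has positive $\lambda$-measure (indeed $W^s_L(\hp) \subset \supp(T^s_\hP)$ for every $\hp \in \hP$, and the construction via the $d^n \hnu(\hP)(1-\varepsilon)$ disjoint tubes $\Delta_{n,\hp}$ shows these stable leaves genuinely carry $\lambda$-mass). For such $a$, the transverse measure $T^u \wedge [V_a]$ charges the intersection $V_a \cap W^u_L(\hq)$ for $\hq$ ranging over a positive-$\hnu$-measure subset of $\hP$, and by \eqref{mesure positive} each such slice of $T$ — hence, after the holonomy/normalization identifications, the relevant piece of $T^u$ restricted to the horizontal disk — has mass at least $m_0$. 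Integrating over $a$ in this positive-measure set yields $M(T^s_\hP \intgeom T^u) \geq c \cdot m_0 \cdot \lambda(\{a : V_a \supset \text{some } W^s_L(\hp)\}) > 0$. The main obstacle I anticipate is the first step: verifying that the potentials of the laminar approximants $\frac{1}{d^n} f_n^*[\Delta]$ are uniformly controlled so that the limit potential is genuinely continuous and not merely upper semicontinuous — this is precisely the content one borrows from \cite[Lemma 8.2]{BLS}, and it is the reason the uniform domination $T^s_\hP \le T$ and the continuity of the Green potential were built into Theorem \ref{constr. TshP}. Once continuity of $u^s_\hP$ is in hand, the well-definedness of the product and the lower bound on its mass follow routinely from Bedford–Taylor theory, the identification of geometric and Bedford–Taylor intersection via \cite[Lemma 2.7]{DDG2}, and the mass estimate \eqref{mesure positive}.
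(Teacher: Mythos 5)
Your overall plan coincides with the paper's, which proves this corollary simply by invoking \cite[Lemma 8.2]{BLS}: get a continuous local potential for $T^s_\hP$, use Bedford--Taylor to define the product, and use the laminar/woven structure to bound the mass from below. However, the mechanism you sketch for the continuity step is not correct. The approximants $\frac{1}{d^n}f_n^*[\Delta]$ are (normalized) finite sums of currents of integration over the disks $\Delta_{n,\hp}$, so their local potentials have logarithmic poles along those disks: they are not continuous, let alone uniformly bounded or equicontinuous, and moreover $T^s_\hP$ is a supremum of cluster values rather than a limit along a subsequence. Hence ``a subsequential limit of equicontinuous potentials'' does not produce $u^s_\hP$. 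The argument that does work --- and is the actual content of \cite[Lemma 8.2]{BLS} --- uses exactly the two facts you already isolated but did not exploit: $T^s_\hP\leq T$ and the continuity of a local potential $g$ of $T$. Since $T^s_\hP$ is closed in $L$ (Theorem \ref{constr. TshP}), so is $T-T^s_\hP\geq 0$; writing locally $T^s_\hP=\ddc u$ and $T-T^s_\hP=\ddc v$ with $u+v=g$ up to a pluriharmonic term, $u$ is upper semicontinuous as a psh function and $u=g-v$ is lower semicontinuous, hence $u$ is continuous, and $\chi T^s_\hP=\chi\,\ddc u^s_\hP$ for every cut-off $\chi$ supported in $L$.

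For the second assertion, well-definedness of $T^s_\hP\intgeom T^u$ via the continuous potential and the comparison with the geometric product are fine, but your lower bound misuses \eqref{mesure positive}: that estimate controls the slices of $T$ (not of $T^u$) on unstable disks, and it enters the disintegration of $\nu$ on unstable manifolds, not the transverse mass of $T^u$ on a stable disk. Also, transversality of a vertical disk $V_a$ to the unstable directions does not by itself make $T^u\wedge[V_a]$ nonzero; one needs the geometric structure of $T^u$. The positivity should be obtained from the woven restriction $T^u_\hP$ of $T^u$ to $W^u(\hP)$ (\cite[Theorem 1.6]{DDG3}, as the paper uses later), whose marking $\nu^u$ has positive mass, together with the fact that in the common Lyapunov chart every unstable disk meets every stable disk of $T^s_\hP$ in at least one point; integrating these point masses against the two markings gives $M(T^s_\hP\intgeom T^u)\geq M(T^s_\hP\intgeom T^u_\hP)>0$, which is exactly the estimate $M(T^s_\hP\intgeom T^u)\geq \nu(\hP)(1-\varepsilon)\cdot\nu^u(\hP)$ recorded in Section \ref{section rmk}.
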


\subsection{Proof of Theorem \ref{th laminarite sans ens. att.}}

\begin{proof}[Proof of Theorem \ref{th laminarite sans ens. att.}]
Fix a Pesin box $\hP$.
The current $T^s_\hP$  of Theorem \ref{constr. TshP} is uniformly laminar, so $\supp \left(\frac{1}{d^n}f^{n*}( T^s_\hP)\right) = f^{-n}\left(W_L^s(\hP)\right)$.
We know that $\supp(\mu_{eq})\cap W_L^s(\hP)=\emptyset$ and $\supp(\mu_{eq})$ is totally invariant thus
$\supp \left(\frac{1}{d^n}f^{n*}( T^s_\hP)\right)$ does not intersect  $\supp(\mu_{eq})$.

Since $\partial f^{-n}(L)$ is smooth, $T^s_\hP\leq T$, and $T\wedge T=0$ outside $\supp(\mu_{eq})$,
we know, by Proposition \ref{prop intersection nulle}, that $\frac{1}{d^n}f^{n*}( T^s_\hP)$ is still a uniformly laminar current subordinate to  $W^s(\nu):= \bigcup_{x\in \supp(\nu)} W^s(x)$,
and the currents $ T^s_\hP,\cdots,\frac{1}{d^n}f^{n*}( T^s_\hP)$ intersect correctly.
So 
$$ T_n=\max \left\lbrace  T^s_\hP,\cdots,\frac{1}{d^n}f^{n*}( T^s_\hP) \right\rbrace$$ is a well defined uniformly laminar current, see \cite[Lemma 6.11]{BLS},  and  $$\supp(T_n)=\underset{i\in \{ 0,\cdots,n\}}{\bigcup} \supp(f^{i*}( T^s_\hP).$$

For all $n\in\N$, we have  $\frac{1}{d^n}f^{n*}( T^s_\hP)\leq T$ so $(T_n)$ is a non-decreasing sequence of  uniformly  laminar currents bounded by $T$ thus $T^s=\sup_n (T_n)$ is well defined. 
The current $T^s$ is laminar and subordinate to  $W^s(\nu)$, and satisfies $T^s\leq T$.
For all $\hp\in \hP$ there exist  infinitely many  $n\in \N$ such that $\hf^n(\hp)\in \hP$ so $W^s(\hP)\subset \supp (T^s)$.

We may do the same for all Pesin box $\hP$.
Thereby, by taking the supremum on the Pesin boxes,
we obtain a laminar current $T^s\leq T$ such that for  $\nu-$almost every $x$,   $W^s(x)\subset \supp(T^s)$. 
\end{proof}

\section{Proof of Theorem \ref{th laminaire dans bassin}}

We are going to prove Theorem \ref{th laminaire dans bassin}, and its corollary, under weaker assumptions.

 \begin{rmk}
It is not clear that $\supp(\nu)$ being included in an attracting set is enough to ensure that $\nu$ admits a negative Lyapunov exponent.
 \end{rmk}
 \begin{theoreme}\label{th laminaire dans bassin2}
Let $f$ be an endomorphism of $\proj^2$ of degree $d$ and $T$ be its  Green current. If $f$ admits a trapping region $U$, such that the conditions \ref{cond att set}, \ref{cond CV} and \ref{cond on nu} are satisfied, then $T$ is laminar subordinate to the stable manifolds $\bigcup_{x\in \supp(\nu)} W^s(x)$ in the basin of attraction $\Br_\A=\underset{n\geq 0}f^{-n}(U)$.
\end{theoreme}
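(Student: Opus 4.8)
The plan is to use a \emph{push-pull} argument to propagate the laminar current $T^s$ obtained in Theorem \ref{th laminarite sans ens. att.} throughout the basin $\Br_\A$ and to identify the resulting limit with the full Green current $T$ there. First I would apply Theorem \ref{th laminarite sans ens. att.} (whose hypotheses are exactly \ref{cond att set} together with \ref{cond CV} and \ref{cond on nu}, since these force $\nu=T\wedge T^u$ to be ergodic of entropy $\log d$, hyperbolic of saddle type, and supported away from $\supp(\mu_{eq})$ because $\supp(\nu)\subset\A\subset U$ and $\supp(\mu_{eq})\cap U=\emptyset$) to produce a non-trivial laminar current $T^s\le T$, subordinate to $\bigcup_{x\in\supp(\nu)}W^s(x)$, with $W^s(x)\subset\supp(T^s)$ for $\nu$-a.e.\ $x$. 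The point is that $T^s$ has positive mass but is, a priori, only supported near $\A$; we must spread it out.

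The core step is the \textbf{pull-back}. Since $T^s\le T$ and $\frac1d f^*T=T$, one gets $\frac1{d^n}f^{n*}T^s\le T$ for every $n$, and as in the proof of Theorem \ref{th laminarite sans ens. att.} each $\frac1{d^n}f^{n*}T^s$ is again uniformly laminar (here one uses that $\partial f^{-n}(L)$ is smooth, that $T^s\le T$, and that $T\wedge T=0$ off $\supp(\mu_{eq})$, via Proposition \ref{prop intersection nulle}), subordinate to $W^s(\nu):=\bigcup_{x\in\supp(\nu)}W^s(x)$, because $f^{-1}$ maps stable manifolds into stable manifolds and $f^{-n}(\A)\subset U$. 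Taking the increasing supremum over $n$ and over all Pesin boxes $\hP$ yields a laminar current $T^s_\infty\le T$ subordinate to $W^s(\nu)$ whose support contains $\bigcup_{n\ge0}f^{-n}\big(W^s(\nu)\big)$, and hence is a forward-invariant, dense-in-$\Br_\A$ set of stable disks.

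It then remains to show $T^s_\infty=T$ in the basin, and here is where assumption \ref{cond CV} is essential. The idea is the \textbf{push-forward} half: take a transverse disk $\Delta$ with $\frac1{d^n}f^{n*}[\Delta]\to T$; for a point $p\in\Br_\A$, after finitely many iterates $f^{k}(p)$ lands in $U$ near $\A$, where the local laminar structure of $T^s$ (equivalently $T^u=\lim \frac1{d^n}f^n_*\phi / \langle T,\phi\rangle$ by \ref{cond CV}) applies, and pulling this structure back by $f^{-k}$ through the smooth maps $f_{-k}$ recovers a uniformly laminar piece of $T$ through $p$. Combining the two estimates — $T^s_\infty\le T$ from above and the lower bound coming from the convergence $\frac1{d^n}f^n_*\phi\to\langle T,\phi\rangle T^u$ which controls the mass of transversal slices from below — forces equality of masses on any relatively compact piece of $\Br_\A$, hence $T^s_\infty=T$ there. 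Finally, the corollary follows: since $T=T^s$ is laminar subordinate to stable manifolds in $\Br_\A$ and $\sigma_T=T\wedge\omega_{FS}$, disintegrating $\sigma_T$ along the stable disks and using that on each stable disk $f^n$ contracts to a point of $\supp(\nu)$ (where, by mixing/ergodicity of $\nu$ and Birkhoff, orbits equidistribute to $\nu$), one gets $\frac1n\sum_{i=0}^n\delta_{f^i(p)}\rightharpoonup\nu$ for $\sigma_T$-a.e.\ $p\in\Br_\A$.

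I expect the \textbf{main obstacle} to be the lower bound: proving that the mass of $T^s_\infty$ is not lost in the limit and actually fills up to $T$ on \emph{open} subsets of $\Br_\A$, rather than merely having the right support. The delicate point is controlling, uniformly in $n$, the transversal mass of the uniformly laminar pieces $\frac1{d^n}f^{n*}T^s$ near the boundary $\partial f^{-n}(L)$ and across different Pesin boxes; this is exactly where the quantitative convergence in \ref{cond CV} (giving $\frac1{d^n}f^n_*\phi\to\langle T,\phi\rangle T^u$ for all continuous $(1,1)$-forms $\phi$ supported in $\Br_\A$) must be leveraged, together with the holonomy invariance of Proposition \ref{inv holonomie}, to trade a transversal mass estimate in a neighbourhood of $\A$ for one at the point $p$.
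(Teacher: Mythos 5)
Your overall plan --- pull back the uniformly laminar piece $T^s_\hP$, take increasing maxima, and use \ref{cond CV} through a push--pull argument to identify the limit inside $\Br_\A$ --- is the same strategy as the paper, and the construction of $T_n=\max\{T^s_\hP,\dots,\frac1{d^n}f^{n*}T^s_\hP\}$ with correct intersections is exactly what is done there. The genuine gap is the step you yourself flag as the ``main obstacle'': the identification of the limit. As written, your closing argument aims at the wrong target and does not contain the mechanism that makes it work. You try to prove $T^s_\infty=T$ by ``equality of masses on relatively compact pieces'', pulling the local laminar structure near $\A$ back through branches $f_{-k}$. There is no reason the pullbacks of the stable current of one Pesin box (or even of all of them) fill up the full mass of $T$; indeed whether one can make $M(T^s_\varepsilon\intgeom T^u)\geq 1-\varepsilon$ is raised as an open question in Section \ref{section rmk}. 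What the paper actually proves, and what suffices, is Proposition \ref{laminar in the basin}: $\frac1{d^n}f^{n*}T^s_\hP\to c\,T$ on $\Br_\A$ with the explicit constant $c=M(T^s_\hP\intgeom T^u)>0$, in general strictly smaller than $1$; since a positive multiple of $T$ is then exhibited as a non-decreasing limit of uniformly laminar currents, $T$ itself is laminar in the basin, and no equality $T^s_\infty=T$ is needed.

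Moreover, the proof of that convergence is not a transversal-slice estimate at a point $p$ but a duality computation which your sketch does not contain: one pairs with test forms $\phi$ supported in $\Br_\A$ and writes $\langle \frac1{d^n}f^{n*}(\psi T^s_\hP),\phi\rangle=\langle \psi T^s_\hP,\frac1{d^n}f^{n}_*\phi\rangle$, then uses \ref{cond CV} to get $\frac1{d^n}f^{n}_*\phi\to\langle T,\phi\rangle T^u$. Even this pairing is not free: $\psi T^s_\hP$ is neither closed nor smooth, so $f^{n*}(\psi T^s_\hP)$ must be defined and controlled via the continuous local potential $u^s_\hP$ of $T^s_\hP$ (the corollary to Theorem \ref{constr. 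TshP}), expanding the pairing into terms involving $\frac1{d^n}f^n_*(\phi)$, $\frac1{d^n}f^n_*(d^c\phi)$ and $\frac1{d^n}f^n_*(\ddc\phi)$, and using that the last two tend to $0$ in mass (\cite{DINH}). None of this appears in your proposal, so the heart of the theorem is missing. (A similar, smaller gap affects your derivation of the corollary: Birkhoff genericity holds only for $\nu$-a.e.\ point, so one must show that $\sigma_T$-a.e.\ stable disk actually meets the Birkhoff-generic set, which the paper does by a Fubini argument on $T^s_\hP\intgeom T^u_\hP$.)
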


\begin{coro}\label{coro de la laminarite}
Under the assumptions  \ref{cond att set}, \ref{cond CV} and \ref{cond on nu}, for $\sigma_T$-almost every $p\in \Br_\A$, we have 
$$\dfrac{1}{n} \sum_{i=0}^n \delta_{f^i(p)} \rightharpoonup \nu.$$ 
\end{coro}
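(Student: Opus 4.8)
The plan is to deduce Theorem~\ref{th laminaire dans bassin2} from Theorem~\ref{th laminarite sans ens. att.} by a push--pull argument that upgrades the inequality $T^s\le T$ to an equality on $\Br_\A$. First note that, since $U\ne\proj^2$ is a trapping region, $\supp(\mu_{eq})\cap U=\emptyset$, so $\supp(\nu)\subset\A\subset U$ is disjoint from $\supp(\mu_{eq})$; together with \ref{cond on nu} this puts us under the hypotheses of Theorem~\ref{th laminarite sans ens. att.}, which provides a non-trivial laminar current $T^s\le T$, subordinate to $W^s(\nu)=\bigcup_{x\in\supp(\nu)}W^s(x)$, with $W^s(x)\subset\supp(T^s)$ for $\nu$-a.e.\ $x$ and with continuous local potentials in $\Br_\A$ (the last point being where the corollary to Theorem~\ref{constr. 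TshP} is used, glued over the Pesin boxes, together with $0\le T^s\le T$). Writing $R:=T-T^s\ge 0$, this is a positive current, closed in the open set $\Br_\A$ and with continuous local potential there; the theorem follows once we show that $R$ carries no mass in $\Br_\A$, for then $T=T^s$ is laminar subordinate to the stable lamination in $\Br_\A$.

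Two facts coming from the construction of $T^s$ drive the argument. The first is the identification of its transverse measure with $\nu$: combining the holonomy invariance in a common Lyapunov chart (Proposition~\ref{inv holonomie}) with the fact that the conditionals of $\hnu$ on the unstable partition are induced by $T$ (Proposition~\ref{prop conditionnelle induite par green}) and with the tube count (Proposition~\ref{nb de tubes}) used to build $T^s$, one gets $T^s\wedge T^u=\nu$; more modestly it suffices to know $M(T^s\wedge T^u)=M(\nu)=1$, since $T$ has continuous potential so $T^s\le T$ already forces $T^s\wedge T^u\le T\wedge T^u=\nu$, and equality of masses then gives $T^s\wedge T^u=\nu$, i.e. $R\wedge T^u=0$. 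The second is that $T^s$ was obtained as an increasing supremum of maxima of the pulled-back currents $\tfrac1{d^k}f^{k*}T^s_\hP$; since $\tfrac1d f^*$ commutes with these maxima up to the obvious index shift, $\tfrac1d f^*T^s\le T^s$, and hence $(T^s)_n:=\tfrac1{d^n}f^{n*}T^s$ is a decreasing sequence of positive currents, bounded above by $T^s$ and below by $0$, supported in $\Br_\A$.

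The push--pull itself runs as follows. From $T=\lim_n\tfrac1{d^n}f^{n*}\omega$ and the fact that $f^n_*f^{n*}$ is multiplication by the degree of $f^n$ one checks directly that $\tfrac1{d^n}f^n_*T=T$, while taking $\phi=\omega$ in \ref{cond CV} gives $\tfrac1{d^n}f^n_*\omega\to\langle T,\omega\rangle\,T^u$, a positive multiple of $T^u$, whose support lies in $\A$ (from $\tfrac1d f_*T^u=T^u$). Since $R$ has continuous local potential one may wedge with this convergent sequence, and the Bedford--Taylor/projection-formula identity $\langle R\wedge\tfrac1{d^n}f^n_*\omega,\psi\rangle=\langle\sigma_T-\sigma_{(T^s)_n},\psi\circ f^n\rangle$ (using $f^{n*}R=d^n(T-(T^s)_n)$, with $\sigma_{(T^s)_n}=(T^s)_n\wedge\omega\le\sigma_T$) lets us pass to the limit: the left-hand side tends to $\langle R\wedge T^u,\psi\rangle$, which we have shown is $0$. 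Now pick $\psi\in C_c(\Br_\A)$ with values in $[0,1]$ and $\psi\equiv 1$ on a neighbourhood of $\A$; for any compact $K\subset\Br_\A$ one has $f^n(K)\to\A$, so $\psi\circ f^n\equiv 1$ on $K$ for $n$ large, whence $(\sigma_T-\sigma_{(T^s)_n})(K)\le\langle\sigma_T-\sigma_{(T^s)_n},\psi\circ f^n\rangle\to 0$; but $(T^s)_n$ is decreasing, so $(\sigma_T-\sigma_{(T^s)_n})(K)$ is non-decreasing in $n$, hence identically $0$, and taking $n=0$ gives $(R\wedge\omega)(K)=(\sigma_T-\sigma_{T^s})(K)=0$. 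As $K$ was arbitrary, $R\wedge\omega=0$ on $\Br_\A$, so $R=0$ there. I expect this last step — making the mass bookkeeping rigorous inside the open set $\Br_\A$ — to be the main obstacle: $T^s$ is not closed on all of $\proj^2$ (its leaves do not close up), so there is no global cohomological control, and one must use simultaneously the concentration of $f^n_*$ onto $\A$, the convergence in \ref{cond CV}, the continuity of the local potential of $T^s$, and the identification $T^s\wedge T^u=\nu$; if only \ref{cond CV} (rather than the uniqueness \ref{cond unicite}) is assumed, a little extra care is needed in handling the cluster values of $\tfrac1{d^n}f^n_*R$, but the scheme is unchanged.

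Finally, Corollary~\ref{coro de la laminarite} follows quickly from $T=T^s$ on $\Br_\A$. The trace measure $\sigma_T=T\wedge\omega$ then disintegrates, in each flow box of the stable lamination, as an integral of area measures on pieces of stable manifolds $W^s(x)$, $x\in\supp(\nu)$, against a transverse measure which, by the above, is the one induced by $\nu$. If $y\in W^s(x)$ then $\dist(f^i(y),f^i(x))\to 0$, so $\tfrac1n\sum_{i=0}^n\delta_{f^i(y)}$ and $\tfrac1n\sum_{i=0}^n\delta_{f^i(x)}$ have the same weak-$*$ cluster values; and by Birkhoff's theorem for the ergodic system $(f,\nu)$, $\nu$-a.e.\ $x$ satisfies $\tfrac1n\sum_{i=0}^n\delta_{f^i(x)}\rightharpoonup\nu$. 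Since the non-generic set is $\nu$-null and the transverse measure of $\sigma_T|_{\Br_\A}$ is carried by $\nu$, its saturation by stable leaves is $\sigma_T$-null, so $\sigma_T$-a.e.\ $p\in\Br_\A$ lies on the stable manifold of a $\nu$-generic point, whence $\tfrac1n\sum_{i=0}^n\delta_{f^i(p)}\rightharpoonup\nu$.
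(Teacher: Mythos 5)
The decisive gap is your first ``fact'': the identification $T^s\intgeom T^u=\nu$ (equivalently $M(T^s\intgeom T^u)=1$, i.e.\ $R\wedge T^u=0$ for $R=T-T^s$), which you present as a consequence of Proposition~\ref{inv holonomie}, Proposition~\ref{prop conditionnelle induite par green} and Proposition~\ref{nb de tubes}. The construction of Theorem~\ref{constr. TshP} only yields, for each Pesin box, the lower bound $M(T^s_\hP\intgeom T^u)\geq \hnu(\hP)(1-\varepsilon)\cdot\nu^u(\hP)$, and taking suprema of the $T^s_\hP$ over boxes does not add these masses; whether one can even reach mass $1-\varepsilon$, let alone $T^s\intgeom T^u=\nu$, is explicitly left open in the paper (see the remark following Theorem~\ref{th laminarite sans ens. att.} in the introduction and the Question in Section~\ref{section rmk}). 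Since both halves of your argument rest on this — the push--pull step deducing $R=0$ on $\Br_\A$ starts from the unproved $R\wedge T^u=0$, and your final step (``the transverse measure of $\sigma_T|_{\Br_\A}$ is carried by $\nu$'') is the same unproved identification in disguise — the proposal does not close. There are also technical wrinkles in the push--pull itself: $\frac1{d^n}f^n_*\omega\to\langle T,\omega\rangle T^u$ is not given by \ref{cond CV}, which only concerns forms supported in $\Br_\A$, and the operations $f^{n*}R$, $f^{n*}T^s$ on the non-closed current $T^s$ must be defined weakly through the continuous local potentials, as is done carefully in Proposition~\ref{laminar in the basin}.

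The paper's proof avoids exactly this: it never claims $T=T^s$ nor identifies the transverse measure with $\nu$. Working in a single Pesin box $\hP$, with $T^s_\hP=\int[W^s_\alpha]\,\ds\nu^s(\alpha)$ and $T^u_\hP=\int[\Delta^u_\beta]\,\ds\nu^u(\beta)$, one only uses the inequality $T^s_\hP\intgeom T^u_\hP\leq\nu$, Birkhoff's theorem for the ergodic measure $\nu$, Fubini, and the product structure of $\hP$ (each leaf $W^s_\alpha$ meets each $\Delta^u_\beta$ inside $\pi_0(\hP)$) to conclude that $\nu^s$-a.e.\ leaf contains a $\nu$-generic point, hence $\sigma_{T^s_\hP}(\Br_\A\setminus\Br_\nu)=0$ by contraction along stable manifolds — this is the rigorous version of your last paragraph, and it needs no equality of masses. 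Then, since $\Br_\nu$ is totally invariant and $\supp(f^{i*}T^s_\hP)=f^{-i}(\supp T^s_\hP)$, the currents $T_n=\max_{i\leq n}\frac1{d^i}f^{i*}T^s_\hP$ have trace measures giving no mass to $\Br_\A\setminus\Br_\nu$, and Proposition~\ref{laminar in the basin} (the only place where \ref{cond CV} enters) gives $\sigma_{T_n}\nearrow c\,\sigma_T$ on $\Br_\A$ with $c=M(T^s_\hP\intgeom T^u)>0$; a strictly positive constant $c$ suffices, since one divides by it to get $\sigma_T(\Br_\A\setminus\Br_\nu)=0$. If you replace your two push--pull paragraphs by this ``positive proportion plus total invariance of $\Br_\nu$'' mechanism, your final paragraph becomes essentially correct.
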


We start with the following proposition:

\begin{prop}\label{laminar in the basin}
In the basin of attraction $\Br_\A$ of $\A$, we have
$$\dfrac{1}{d^n}f^{n*}T^s_\hP\rightarrow cT \text{ where } c=M(T^s_\hP\intgeom T^u)>0.$$
\end{prop}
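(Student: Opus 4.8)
The plan is to exploit the convergence hypothesis \ref{cond CV}, which asserts that $\frac{1}{d^n}f^n_*\phi \to \langle T,\phi\rangle\, T^u$ for every $(1,1)$-form $\phi$ with continuous coefficients and support in $\Br_\A$, together with the fact that $T^s_\hP$ has a continuous local potential $u^s_\hP$ on $L$ (from the corollary following Theorem \ref{constr. TshP}). The key point is a push-pull duality: testing $\frac{1}{d^n}f^{n*}T^s_\hP$ against a form $\psi$ amounts, after the change of variables $f^n$, to testing $T^s_\hP$ against $\frac{1}{d^n}f^n_*\psi$, and the latter converges to $\langle T,\psi\rangle\,T^u$ by \ref{cond CV}. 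Hence any cluster value $S$ of $\frac{1}{d^n}f^{n*}T^s_\hP$ should satisfy $\langle S,\psi\rangle = \langle T^s_\hP \wedge T^u,\psi\rangle$ up to the right normalization — but since both $S$ and $T^s_\hP\wedge T^u$ are being compared against scalars $\langle T,\psi\rangle$, one gets $S = c\,T$ with $c = M(T^s_\hP\wedge T^u) = M(T^s_\hP\intgeom T^u)$, which is positive by the same corollary.

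More precisely, first I would note that $\frac{1}{d^n}f^{n*}T^s_\hP \leq \frac{1}{d^n}f^{n*}T = T$, so the sequence has locally bounded mass and cluster values exist; it suffices to identify the limit. Next, fix a test form $\psi$ with continuous coefficients supported in $\Br_\A$ and compute
$$\left\langle \tfrac{1}{d^n}f^{n*}T^s_\hP,\ \psi\right\rangle = \left\langle T^s_\hP,\ \tfrac{1}{d^n}f^n_*\psi\right\rangle.$$
Because $T^s_\hP$ is a positive closed current on $L$ with continuous potential, pairing it against the convergent sequence of forms $\frac{1}{d^n}f^n_*\psi \to \langle T,\psi\rangle\,T^u$ passes to the limit (the potential-continuity is exactly what makes the wedge product $T^s_\hP\wedge(\cdot)$ continuous in the right topology), giving
$$\lim_n \left\langle \tfrac{1}{d^n}f^{n*}T^s_\hP,\ \psi\right\rangle = \langle T,\psi\rangle \cdot M\!\left(T^s_\hP\wedge T^u\right) = \langle T,\psi\rangle\cdot c.$$
Since this holds for all such $\psi$ and both sides are determined by their values on forms supported in $\Br_\A$ (outside $\Br_\A$ the current $\frac1{d^n}f^{n*}T^s_\hP$ eventually has no mass, as $\supp T^s_\hP\subset L\subset U$ and $f^{-n}(U)\subset\Br_\A$), we conclude $\frac{1}{d^n}f^{n*}T^s_\hP \to cT$. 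The equality $M(T^s_\hP\intgeom T^u)=M(T^s_\hP\wedge T^u)$ and its positivity come from the proposition comparing \ref{def int geom} and \ref{def int geom 2} applied to $R=T^u$ (closed, continuous potentials) and $S=T^s_\hP$ (uniformly laminar), plus the final assertion of the corollary to Theorem \ref{constr. TshP}.

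The main obstacle I anticipate is justifying the passage to the limit in $\langle T^s_\hP, \frac{1}{d^n}f^n_*\psi\rangle$: the forms $\frac{1}{d^n}f^n_*\psi$ converge to a \emph{current} $T^u$, not a form, so one must argue that pairing with $T^s_\hP$ is continuous along this particular sequence. This is where the continuity of the potential $u^s_\hP$ is essential — writing $\chi T^s_\hP = \chi\,\ddc u^s_\hP$ locally and integrating by parts twice moves the derivatives onto $u^s_\hP$, so that the pairing becomes an integral of $u^s_\hP$ (a fixed continuous function) against $\frac{1}{d^n}f^n_*(\text{cutoff}\cdot\psi)$, for which \ref{cond CV} applies directly. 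A secondary technical point is handling the boundary of $L$ and of $f^{-n}(L)$ (the cutoff/localization), but since $\nu(\partial L)=0$ and the boundaries are smooth this is routine. I should also confirm that $c$ does not depend on $n$ or on the cluster value extracted, which follows because the limiting pairing $\langle T,\psi\rangle\cdot c$ is the same for every subsequence.
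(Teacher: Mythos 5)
Your proposal is correct and takes essentially the same route as the paper: push--pull duality $\langle \frac{1}{d^n}f^{n*}(\psi T^s_\hP),\phi\rangle=\langle \psi T^s_\hP,\frac{1}{d^n}f^n_*\phi\rangle$, the continuous potential $u^s_\hP$ with integration by parts, and hypothesis \ref{cond CV} to pass to the limit, giving the constant $c=\langle T^s_\hP,T^u\rangle=M(T^s_\hP\intgeom T^u)>0$. The only point the paper makes explicit that you leave implicit is that the integration by parts produces the extra terms $d\psi\wedge\frac{1}{d^n}f^n_*(d^c\phi)$ and $\psi\,\frac{1}{d^n}f^n_*(\ddc\phi)$, whose masses tend to $0$ (cf. \cite[Proposition 4.7]{DINH}), so only the term $\ddc\psi\wedge\frac{1}{d^n}f^n_*\phi$ survives and \ref{cond CV} applies to it.
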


\begin{proof}
Let $\psi$ be a smooth cut-off  function with support in $L$ and $\phi$ a $(1,1)$-smooth form with support in $\Br_\A$, then, by hypothesis \ref{cond CV}, $\dfrac{1}{d^n}f^n_* \phi \rightarrow \langle T,\phi\rangle T^u$.
If the potential $u^s_\hP$ of $T^s_\hP$ is smooth then
\begin{align*}
\langle \frac{1}{d^n}f^{n*}(\psi T^s_\hP),\phi \rangle & = \langle \psi T^s_\hP , \frac{1}{d^n}f^{n}_*(\phi) \rangle \\
& = \int u^s_\hP\left( \dd^c(\psi)\wedge \frac{1}{d^n}f^{n}_*(\phi) + d\psi\wedge \frac{1}{d^n}f^{n}_*(d^c\phi)+\psi \frac{1}{d^n}f^{n}_*(\dd^c \phi)\right).
\end{align*}

Otherwise, since $\dd^c(\psi)$, $\phi$, $d\psi$, $d^c\phi$, $\psi$ and $\dd^c \phi$ are smooth forms and the push forward of a smooth form is a current with continuous coefficients,
each term of the sum $\dd^c(\psi)\wedge \frac{1}{d^n}f^{n}_*(\phi) + d\psi\wedge \frac{1}{d^n}f^{n}_*(d^c\phi)+\psi \frac{1}{d^n}f^{n}_*(\dd^c \phi)$ is well defined and has continuous coefficients.
We can defined $\frac{1}{d^n}f^{n*}(\psi T^s_\hP)$ by $$\langle \frac{1}{d^n}f^{n*}(\psi T^s_\hP),\phi \rangle = \int u^s_\hP\left( \dd^c(\psi)\wedge \frac{1}{d^n}f^{n}_*(\phi) + d\psi\wedge \frac{1}{d^n}f^{n}_*(d^c\phi)+\psi \frac{1}{d^n}f^{n}_*(\dd^c \phi)\right).$$
Moreover, $u^s_\hP$ is continuous so  $\phi \mapsto \langle \frac{1}{d^n}f^{n*}(\psi T^s_\hP),\phi \rangle$ is also  continuous.

We know that $||\frac{1}{d^n}f^{n}_*(d^c\phi)||\rightarrow 0$ and $||\frac{1}{d^n}f^{n}_*(\dd^c \phi)||\rightarrow 0$, see \cite[Proposition 4.7]{DINH}.
We conclude that, for all $(1,1)$-smooth form  $\phi$  with support in  $\Br_\A$, we have $\langle \frac{1}{d^n}f^{n*}(\psi T^s_\hP),\phi \rangle \rightarrow \langle T,\phi \rangle \langle \psi T^s_\hP,T^u  \rangle$.
\end{proof}

We now prove Theorem \ref{th laminaire dans bassin2}.

\begin{proof}[Proof of Theorem \ref{th laminaire dans bassin2}]
As we saw in the proof of Theorem \ref{th laminarite sans ens. att.}, $  T_n=\max \lbrace  T^s_\hP,\cdots,\frac{1}{d^n}f^{n*}( T^s_\hP) \rbrace$ is a well defined uniformly laminar current   subordinate to $W^s(\A)$. 
Moreover, $\sup_n (T_n)$ is a well defined laminar current   subordinate to $W^s(\A)$ and satisfy $\sup_n (T_n)\leq T$.
By Proposition \ref{laminar in the basin}, the non-decreasing  limit of $(T_n)$ is equal to $cT$. 
Thereby, the restriction of the Green current $T$ to the basin of attraction of $\A$ is a laminar current subordinate to $W^s(\A)$.
\end{proof}

We end this section with the proof of Corollary \ref{coro de la laminarite}.
Denote $\Br_\nu$ the basin of attraction of $\nu$, i.e. 
$$\Br_\nu: =\left\lbrace p \, | \, \dfrac{1}{n} \sum_{i=0}^n \delta_{f^i(p)} \rightharpoonup \nu  \right\rbrace.
$$
The proof is based on the fact that if $p\in W^s(q)$ then 
\begin{equation}\label{eq meme limite}
\frac{1}{n} \sum_{i=0}^n \delta_{f^i(p)}- \frac{1}{n} \sum_{i=0}^n \delta_{f^i(q)} \rightharpoonup 0.
\end{equation}
In fact, let $\varphi$ be a continuous function defined on $\Br_\A$ and let $p,q\in \Br_\A$ such that $p\in W^s(q)$, then $\underset{n\rightarrow +\infty}{\lim}  \, \frac{1}{n} \sum_{i=0}^n \left(\varphi\circ f^i (p)- \varphi\circ f^i (q)\right)=0$.

\begin{proof}[Proof of Corollary \ref{coro de la laminarite}]
Fix $\varepsilon>0$ and denote by $A_\varepsilon$ the set $A_\varepsilon=\pi_0(\hR_\varepsilon)\cap \Br_\nu$. 
Let $\hP\subset \hR_\varepsilon$ be a  Pesin box,  $L$ be a common  Lyapunov chart of $\hP$ and $T^s_\hP$ be the current constructed in Theorem \ref{constr. TshP}.

We may define the restriction $T^u_\hP$  of $T^u$ to $W^u(\hP)$. See Section 1.3 and 1.6 of \cite{DDG3} for more details. By \cite[Theorem 1.6]{DDG3}, $T^u_\hP$ is a uniformly woven current.
Denote by  $\nu^s, \nu^u$ the measure such that 
$$T^s_\hP=\int [W^s_\alpha] \ds \nu^s(\alpha), \, T^u_\hP=\int [\Delta^u_\beta] \ds \nu^u(\beta)$$
$$\text{ and } \nu_\hP:=T^s_\hP\intgeom T^u_\hP=\int [W^s_\alpha]\intgeom [\Delta_\beta] \ds \nu^s(\alpha)\otimes \nu^u(\beta).$$
Since $\nu$ is ergodic, by Birkhoff theorem, we have $\nu(\Br_\nu)=1$ so $0= \nu_{|\hP}(A_\varepsilon^c)\geq \nu_\hP(A_\varepsilon^c)$, where $\nu_{|\hP}$ is the restriction of $\nu$ to $\hP$.
By Fubini Theorem, for $\nu^s$-a.e. $\alpha$ we have $$\int [W^s_\alpha]\intgeom [\Delta^u_\beta] (A_\varepsilon^c) \ds \nu^u(\beta)=0.$$
We deduce that for $\nu^s$-a.e.  $\alpha$, we have $$\int [W^s_\alpha]\intgeom [\Delta^u_\beta] (A_\varepsilon) \ds \nu^u(\beta)=\int [W^s_\alpha]\intgeom [\Delta^u_\beta] (\pi_0(\R_\varepsilon)) \ds \nu^u(\beta) \geq\int [W^s_\alpha]\intgeom [\Delta^u_\beta] (\pi_0(\hP)) \ds \nu^u(\beta) >0.$$
In particular, for $\nu^s$-a.e.  $\alpha$, $W^s_\alpha\cap A_\varepsilon\neq \emptyset$.
Since $\sigma_{T^s_\hP}=\int [W^s_\alpha]\intgeom \omega_{FS} \ds \nu^s(\alpha)$, for $\sigma_{T^s_\hP}$-a.e. $p\in\Br_\A$ there exists $\alpha$ such that $p\in W^s_\alpha$ and $W^s_\alpha\cap A_\varepsilon\neq \emptyset$.
Thus there exists $q\in \A_\varepsilon$ such that $p\in W^s_\alpha=W^s_L(q) $.
By \eqref{eq meme limite}, we have $\frac{1}{n} \sum_{i=0}^n \delta_{f^i(p)}\rightharpoonup  \nu$.

Denote $T_n:=\underset{1<i<n}{\max} f^{i*} T^s_\hP$, since $f^{-1}(\Br_\nu)=\Br_\nu$ and $\supp(f^{i*} T^s_\hP)=f^{-i}\supp( T^s_\hP)$, we have $\sigma_{T_n}(\Br_\A\setminus \Br_\nu)=0$.
We deduce from the proof of Theorem \ref{th laminaire dans bassin} that  $\left(\sigma_{T_n} \right)$ is a non-decreasing sequence of measures converging to $\sigma_T$ thus 
 $$\sigma_T(\Br_\A\setminus \Br_\nu)=\lim \, \sigma_{T_n}(\Br_\A\setminus \Br_\nu)=0.$$
Hence for $\sigma_{T}$-a.e.  $p\in\Br_\A$ we have $\frac{1}{n} \sum_{i=0}^n \delta_{f^i(p)}\rightharpoonup \nu$.
\end{proof}

\section{Equidistribution of saddle periodic points}\label{section equidistribution}

In this section, we follow the ideas of \cite{BLS} which have also been used in \cite{DDG3}.
We assume that $f$  admits an attracting set $\A$ which satisfies the condition \ref{cond U rectract} and that there exists an invariant current  $T^u$ ($\frac{1}{d}f_*T^u=T^u$) such that the measure $\nu=T\wedge T^u$ satisfies \ref{cond forte on nu}.

\begin{rmk}
In this section, we do not need to have any convergence toward the current $T^u$.
\end{rmk}

 Denote by $\Br_\A$ the basin of attraction of $\A$.
 Fix a Pesin box $\hP$ and a  Lyapunov chart $L$ of $\hP$.

\begin{lemme}[Shadowing lemma]\label{Shadowing lemma}
For all $\hx\in  \hP\cap \hf^{-n}(\hP)$ there exists a (unique) periodic point  $\kappa(\hx)\in L^s_n(\hx)\cap L^u_n(\hf^n(\hx))$ of period $n$.
\end{lemme}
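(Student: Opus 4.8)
The plan is to reduce the statement to the existence and uniqueness of a fixed point of a horizontal--like map of degree~$1$, and then produce that fixed point by two applications of the contraction principle.

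\emph{Reduction.} Since $\hx\in\hP\cap\hf^{-n}(\hP)$, both $\hx$ and $\hf^n(\hx)$ lie in the Pesin box $\hP$, so the common Lyapunov chart gives $L(\hx)=L(\hf^n(\hx))=L$, $W^s_L(\hx)$ is a vertical disk in $L$ and $W^u_L(\hf^n(\hx))$ is a horizontal disk in $L$. From \eqref{def Ln} one reads off $L^u_n(\hf^n(\hx))=f^n\bigl(L^s_n(\hf^{-n}(\hf^n(\hx)))\bigr)=f^n\bigl(L^s_n(\hx)\bigr)$. Hence, if $\kappa\in L^s_n(\hx)$ satisfies $f^n(\kappa)=\kappa$, then automatically $\kappa=f^n(\kappa)\in f^n(L^s_n(\hx))=L^u_n(\hf^n(\hx))$, the forward orbit $\kappa,f(\kappa),\dots$ stays in the Lyapunov charts along the orbit of $\hx$, and $\kappa$ is a periodic point with $f^n(\kappa)=\kappa$. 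So it suffices to show that the cut--off map
$$F:=f^n\big|_{L^s_n(\hx)}\colon\ L^s_n(\hx)\ \longrightarrow\ L$$
has a unique fixed point.

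\emph{The crossed map and its fixed point.} The map $f\colon L(\hp)\to L(\hf(\hp))$ is horizontal--like of degree~$1$, and by construction $L^s_n(\hx)$ is exactly the domain on which the $n$--fold cut--off composition is defined and lands in $L=L(\hf^n(\hx))$; by Proposition~\ref{Pesin donne H like} and the subsequent statements on the graph transform $f_n$ (preimage of a vertical graph is a vertical graph, image of a horizontal graph is a horizontal graph, and $f_{-n}$ is well defined on horizontal graphs), the composite $F$ is again horizontal--like of degree~$1$. In particular $L^s_n(\hx)$ is a vertical tube containing the vertical disk $W^s_L(\hx)$, $F(L^s_n(\hx))=L^u_n(\hf^n(\hx))$ is a horizontal tube containing the horizontal disk $W^u_L(\hf^n(\hx))$, and by Theorem~\ref{Pesin faible} (together with \eqref{contraction}--\eqref{dilatation}) $F$ expands the horizontal direction by a factor $\gtrsim e^{n(\chi_u-\gamma)}>1$ and contracts the vertical one by $\lesssim e^{n(\chi_s+\gamma)}<1$. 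Therefore, on the complete metric space $\mathcal V$ of holomorphic vertical graphs over the vertical disk of $L$ (with the uniform metric), the map $V\mapsto F^{-1}(V)\cap L^s_n(\hx)$ is a contraction and has a unique fixed graph $V^\ast\subset L^s_n(\hx)$; symmetrically, on the complete metric space $\mathcal H$ of holomorphic horizontal graphs over the horizontal disk of $L$, the map $H\mapsto F(H\cap L^s_n(\hx))$ is a contraction with unique fixed graph $H^\ast$. Since graphs in a Lyapunov chart are uniformly Lipschitz with small constant, $V^\ast$ and $H^\ast$ meet in a single transverse point $\kappa=V^\ast\cap H^\ast$; from $F(V^\ast)\subset V^\ast$ and $F(H^\ast\cap L^s_n(\hx))=H^\ast$ one gets $F(\kappa)\in V^\ast\cap H^\ast=\{\kappa\}$, so $f^n(\kappa)=\kappa$, and we set $\kappa(\hx):=\kappa$.

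\emph{Uniqueness and the main difficulty.} By the degree--$1$ horizontal--like property, the set of points of $L^s_n(\hx)$ with a bi--infinite orbit staying in $L^s_n(\hx)$ is exactly $\bigcap_k F^{-k}(L^s_n(\hx))\cap\bigcap_k F^{k}(L^s_n(\hx))=V^\ast\cap H^\ast=\{\kappa\}$; any $f^n$--periodic point contained in $L^s_n(\hx)$ belongs to this set, hence equals $\kappa(\hx)$. (Equivalently, one could invoke that a degree--$1$ crossed map has a single fixed point counted with multiplicity, via a boundary degree computation.) I expect the only genuine obstacle to be bookkeeping: verifying that the horizontal--like/degree--$1$ structure and the associated expansion/contraction estimates survive the $n$--fold composition with cut--offs in the present non--injective setting; but, as remarked after Proposition~\ref{Pesin donne H like}, the graph transforms use only injectivity along horizontal graphs, which does hold, so this is already essentially contained in Section~\ref{section structure geometrique}'s preliminaries.
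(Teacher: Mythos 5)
Your argument is correct and is essentially the same as the paper's, whose ``proof'' is just a citation of [BLS2, p.\ 284] and [DDG3, Section 9, Step 2]: there as here, the point is that the cut-off map $f^n\colon L^s_n(\hx)\to L$ is a hyperbolic horizontal-like map of degree $1$ of the common Lyapunov chart, and its unique fixed point is obtained by the graph-transform/contraction argument, giving $\kappa(\hx)\in L^s_n(\hx)\cap L^u_n(\hf^n(\hx))$.
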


\begin{proof}
See \cite[p.284]{BLS2} or \cite[Section 9 Step 2]{DDG3}.
\end{proof}

Denote by $Per_n$ the set of periodic points of period $n$ and
$P_{L,n}=\lbrace \kappa(\hp)\, |\,  \hp\in\hP\rbrace$. 
For all $\kappa\in P_{L,n}$ denote
 $$ \Omega(\kappa) = \lbrace \hx \in  \hP\cap \hf^{-n}(\hP) \, |\, \kappa (\hx)= \kappa \rbrace.$$
Let us recall that if $\hp,\hq\in \hP\cap f^{-n}(\hP) $ then $L^s_n(\hp)\cap L^s_n(\hq)$ is empty or  $L^s_n(\hp) = L^s_n(\hq)$, see Lemma \ref{lemme Lsn= compo connex}.
So  
$$ \Omega(\kappa)\subset\hP\cap \hf^{-n}(\hP)  \cap \pi_0^{-1}( L^s_n(\kappa)),$$
where $L^s_n(\kappa)=L^s_n(\hx)$ if $\kappa (\hx)= \kappa$.

\begin{lemme}
We have $\underset{n\rightarrow\infty}{\liminf}\dfrac{1}{d^n}\sharp  (Per_n\cap L)\geq \hnu(\hP).$
\end{lemme}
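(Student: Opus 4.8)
The plan is to count periodic points in $L$ from below by exhibiting, for each $n\in\N_\hP$, a large family of \emph{distinct} periodic points of period $n$ lying in $L$. The Shadowing Lemma (Lemma \ref{Shadowing lemma}) attaches to every $\hx\in\hP\cap\hf^{-n}(\hP)$ a periodic point $\kappa(\hx)\in L^s_n(\hx)\cap L^u_n(\hf^n(\hx))\subset L$ of period $n$. So it suffices to bound from below the number of distinct values taken by $\hx\mapsto\kappa(\hx)$ on $\hP\cap\hf^{-n}(\hP)$, i.e. the cardinality of $P_{L,n}$.

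The key observation is that the fibres $\Omega(\kappa)$ of this map are small for $\hnu$: by the displayed inclusion $\Omega(\kappa)\subset\hP\cap\hf^{-n}(\hP)\cap\pi_0^{-1}(L^s_n(\kappa))$ together with Lemma \ref{majoration nu_{|L^s_n}}, each fibre satisfies
$$
\hnu(\Omega(\kappa))\;\le\;\hnu\!\left(\pi_0^{-1}(L^s_n(\kappa))\cap\hP\cap\hf^{-n}(\hP)\right)\;\le\;d^{-n}\hnu(\hP).
$$
On the other hand, the sets $\Omega(\kappa)$, $\kappa\in P_{L,n}$, cover $\hP\cap\hf^{-n}(\hP)$ (every $\hx$ there has a well-defined $\kappa(\hx)$), and for $n\in\N_\hP$ we have $\hnu(\hP\cap\hf^{-n}(\hP))\ge\hnu(\hP)^2(1-\eps)$ by the lemma preceding the remark about holonomy invariance. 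Combining,
$$
\hnu(\hP)^2(1-\eps)\;\le\;\hnu\!\left(\hP\cap\hf^{-n}(\hP)\right)\;\le\;\sum_{\kappa\in P_{L,n}}\hnu(\Omega(\kappa))\;\le\;\sharp P_{L,n}\cdot d^{-n}\hnu(\hP),
$$
whence $\sharp P_{L,n}\ge d^n\hnu(\hP)(1-\eps)$. Since $P_{L,n}\subset Per_n\cap L$, this gives $\sharp(Per_n\cap L)\ge d^n\hnu(\hP)(1-\eps)$ for all $n\in\N_\hP$, and as $\N_\hP$ is infinite, $\liminf_{n\to\infty}d^{-n}\sharp(Per_n\cap L)\ge\hnu(\hP)(1-\eps)$. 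Letting $\eps\to0$ (the Pesin box $\hP$ and chart $L$ depend on $\eps$, so one should phrase this as: for every $\eps$ there is such a box, and taking $\hnu(\hP)$ close to the measure of $\hR_\eps$ and then $\eps\to 0$) yields the claimed bound $\liminf_{n\to\infty}\tfrac{1}{d^n}\sharp(Per_n\cap L)\ge\hnu(\hP)$.

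The only genuine subtlety — the main obstacle — is making sure the map $\hx\mapsto\kappa(\hx)$ is genuinely well defined and that its fibres really are contained in a single tube $L^s_n(\cdot)$: this is exactly what Lemma \ref{lemme Lsn= compo connex} supplies, since two tubes $L^s_n(\hp)$, $L^s_n(\hq)$ meeting a common point coincide, so $\kappa(\hx)=\kappa(\hx')$ forces $L^s_n(\hx)=L^s_n(\hx')$ and hence $\Omega(\kappa)$ lies in one tube. Everything else is the bookkeeping above; no further estimate is needed. Note also that distinctness of the periodic points counted is automatic here because we are counting the \emph{values} $\kappa$, not the points $\hx$.
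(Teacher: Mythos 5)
Your counting core is exactly the paper's: by Lemma \ref{Shadowing lemma} every $\hx\in\hP\cap\hf^{-n}(\hP)$ has a shadowing periodic point $\kappa(\hx)\in L$, the fibres $\Omega(\kappa)$ sit inside a single tube by Lemma \ref{lemme Lsn= compo connex} and hence have $\hnu$-measure at most $d^{-n}\hnu(\hP)$ by Lemma \ref{majoration nu_{|L^s_n}}, and since they cover $\hP\cap\hf^{-n}(\hP)$ one gets $d^{-n}\sharp P_{L,n}\geq \hnu\bigl(\hP\cap\hf^{-n}(\hP)\bigr)/\hnu(\hP)$. The gap is in the final limit step. You feed this inequality only with $n\in\N_\hP$, and that set is merely infinite: a lower bound for $\sharp(Per_n\cap L)$ along an infinite subsequence controls the $\limsup$, not the $\liminf$ asserted in the statement (and the $\liminf$ is what is actually needed afterwards, since Lemma \ref{equidistrib part sup} must apply to \emph{every} cluster value of $(\nu_n)$). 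In addition, your device of ``letting $\eps\to 0$'' is not available: the lemma concerns a fixed Pesin box $\hP$ and a fixed chart $L$, and both the box and the set $\N_\hP$ were built after fixing the $\eps$ of $\hR_\eps$; for that fixed $\hP$ your argument only yields the constant $\hnu(\hP)(1-\eps)$, and only along $\N_\hP$.

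The paper removes both defects at once by using the standing hypothesis of this section that $\nu$ satisfies \ref{cond forte on nu}, i.e. is mixing: then $\hnu\bigl(\hP\cap\hf^{-n}(\hP)\bigr)\rightarrow\hnu(\hP)^2$ as $n\rightarrow\infty$ along the whole sequence, so the inequality $d^{-n}\sharp(Per_n\cap L)\geq d^{-n}\sharp P_{L,n}\geq \hnu\bigl(\hP\cap\hf^{-n}(\hP)\bigr)/\hnu(\hP)$ immediately gives $\liminf_{n\rightarrow\infty} d^{-n}\sharp(Per_n\cap L)\geq\hnu(\hP)$, with no $(1-\eps)$ loss and no recourse to $\N_\hP$ (that lemma, which relies only on ergodic decomposition, is the right tool earlier, in the construction of $T^s_\hP$, where mixing is not assumed; here mixing is assumed and should be used).
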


\begin{proof}
By Lemma \ref{majoration nu_{|L^s_n}}, we know that for every $\kappa \in P_{L,n}$, $\hnu(\Omega(\kappa))\leq d^{-n}\hnu(\hP)$.
By Lemma \ref{Shadowing lemma}, $\hP\cap \hf^{-n}(\hP)$ is the disjoint union $\underset{\kappa\in P_{L,n}}{\bigcup}\Omega(\kappa)$, thus
$$\hnu(\hP\cap \hf^{-n}(\hP))=\sum_{\kappa\in P_{L,n}}\hnu(\Omega(\kappa))\leq  \frac{1}{d^n}\hnu(\hP)\sharp( P_{L,n})$$
and $d^{-n}\sharp( P_{L,n})\geq \frac{\hnu(\hP\cap \hf^{-n}(\hP))}{\hnu(\hP)}$.
Since $P_{L,n}\subset Per_n\cap L$, we have $$\underset{n\rightarrow\infty}{\liminf}\frac{1}{d^n}\sharp  (Per_n\cap L)\geq  \dfrac{\hnu(\hP\cap \hf^{-n}(\hP))}{\hnu(\hP)}.$$
The measure $\nu$ is mixing, we conclude that $\underset{n\rightarrow\infty}{\liminf}\frac{1}{d^n}\sharp  (Per_n\cap L)\geq \hnu(\hP).$ 
\end{proof}

\begin{lemme}\label{equidistrib part sup}
Let $$\nu_n=\dfrac{1}{d^n}\sum_{\kappa \in Per_n\cap U} \delta_\kappa$$ and $\tilde{\nu}$ be a cluster value of $(\nu_n)$, then $\tilde{\nu}\geq \nu$.
\end{lemme}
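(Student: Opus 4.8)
The plan is to show that every cluster value $\tilde\nu$ of the sequence $(\nu_n)$ dominates $\nu$ by a covering argument over Pesin boxes, using the shadowing lemma to produce enough periodic points inside each Lyapunov chart. First I would fix $\eta>0$ and a finite family of disjoint Pesin boxes $\hP_1,\dots,\hP_N$ whose $\pi_0$-images have diameter $<\eta$ and whose union covers $\hR_\varepsilon$, so that $\hnu(\bigcup_i \hP_i)\geq 1-\varepsilon$; I endow each $\hP_i$ with a common Lyapunov chart $L_i$. For each $n\in\N_{\hP_i}$, the shadowing lemma (Lemma \ref{Shadowing lemma}) attaches to every $\hx\in \hP_i\cap\hf^{-n}(\hP_i)$ a periodic point $\kappa(\hx)\in L^s_n(\hx)\cap L^u_n(\hf^n(\hx))$ of period exactly $n$, and by Lemma \ref{lemme Lsn= compo connex} two points with the same tube $L^s_n$ give the same $\kappa$, so counting $\Omega(\kappa)$-cells and using the bound $\hnu(\Omega(\kappa))\leq d^{-n}\hnu(\hP_i)$ from Lemma \ref{majoration nu_{|L^s_n}} yields $d^{-n}\sharp(P_{L_i,n})\geq \hnu(\hP_i\cap\hf^{-n}(\hP_i))/\hnu(\hP_i)$, which by mixing tends to at least $\hnu(\hP_i)$ along $n\in\N_{\hP_i}$.

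Next I would relate this count to the measures $\nu_n$. Each periodic point $\kappa\in P_{L_i,n}$ lies in $L_i\subset U$, hence contributes a Dirac mass $d^{-n}\delta_\kappa$ to $\nu_n$; moreover $\kappa$ lies in the tube $L^s_n(\hx)\subset L_i$ which shrinks exponentially fast (by \eqref{contraction}) to the stable manifold $W^s_{L_i}(\hx)$, and $\hx\in\hP_i\cap\hf^{-n}(\hP_i)$. The geometric idea is that the distribution $\frac{1}{d^n}\sum_{\kappa\in P_{L_i,n}}\delta_\kappa$ is, up to an exponentially small displacement, the image under $\pi_0$ of the measure $\hnu$ restricted to $\hP_i\cap\hf^{-n}(\hP_i)$, disintegrated along the unstable tubes. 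More precisely, I would use that the conditionals of $\hnu$ on $\hxi^u$ are induced by $T$ (Proposition \ref{prop conditionnelle induite par green}) together with the holonomy invariance along $\Wr^s_L$ (Proposition \ref{inv holonomie}): within each tube $L^s_n(\hx)$ the point $\kappa$ and the boxes are matched by stable holonomy, so that averaging $\delta_\kappa$ over the $d^n\hnu(\hP_i)(1-\varepsilon)$ disjoint tubes provided by Proposition \ref{nb de tubes} gives a measure that, in the weak-$*$ limit, is bounded below by $\pi_0{}_*(\hnu|_{\hP_i})$ restricted to a set of measure $\geq \hnu(\hP_i)-(\text{error})$. Passing to the limit $n\to\infty$ along $\N_{\hP_i}$ and then summing over the disjoint boxes $\hP_i$, the disjointness ensures no multiple counting, so any cluster value $\tilde\nu$ of $(\nu_n)$ satisfies $\tilde\nu \geq \sum_i \pi_0{}_*(\hnu|_{\hP_i}) \geq (1-\varepsilon)\nu$; letting $\varepsilon\to0$ gives $\tilde\nu\geq\nu$.

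The main obstacle I expect is the bookkeeping that turns the lower bound on the \emph{number} of periodic points into a lower bound on the \emph{measure} $\tilde\nu$ as a whole, rather than just its total mass. One must check that the periodic points $\kappa\in P_{L_i,n}$ are genuinely equidistributed inside the boxes according to $\hnu$ — this is where the shadowing correspondence $\hx\mapsto\kappa(\hx)$, the exponential contraction of the tubes, and the holonomy invariance of $T\intgeom[D]$ must be combined so that the empirical measure $\frac{1}{d^n}\sum\delta_{\kappa}$ is compared to $\pi_0{}_*\big(\hnu|_{\hP_i\cap\hf^{-n}(\hP_i)}\big)$ cell by cell. A secondary technical point is handling the error terms: the sets $\N_{\hP_i}$ differ from one box to another, so one fixes a single continuous test function $\varphi\geq 0$ and a finite union of boxes, then chooses $n$ simultaneously in enough of the $\N_{\hP_i}$ by a standard density argument, before letting $\varepsilon$ and $\eta$ tend to $0$.
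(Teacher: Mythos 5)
Your first paragraph is exactly the paper's route: it reproduces the proof of the (unlabelled) counting lemma that immediately precedes Lemma \ref{equidistrib part sup}, namely $\liminf_n d^{-n}\sharp(Per_n\cap L)\geq \hnu(\hP)$, obtained from Lemma \ref{Shadowing lemma}, Lemma \ref{lemme Lsn= compo connex}, Lemma \ref{majoration nu_{|L^s_n}} and mixing. The paper then concludes in one line: since, up to a $\nu$-null set, everything can be covered by Pesin boxes of arbitrarily small diameter, the box-by-box cardinality bound forces $\tilde\nu\geq\nu$. The divergence is in your second paragraph, where you try to upgrade the cardinality bound to an equidistribution of the shadowing periodic points \emph{inside} each box, i.e. to compare $\frac{1}{d^n}\sum_{\kappa\in P_{L_i,n}}\delta_\kappa$ with $\pi_{0*}\bigl(\hnu|_{\hP_i}\bigr)$ ``cell by cell'' via Proposition \ref{prop conditionnelle induite par green} and Proposition \ref{inv holonomie}. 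You state this comparison, call it the main obstacle, and never actually prove it; as written this is the gap in your argument.

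The good news is that this step is unnecessary, which is precisely why the paper's proof is so short. Since $\pi_0(\hP_i)$ has diameter $<\eta$ and each periodic point attached to $\hP_i$ lies in $L^s_n(\hx)\cap L^u_n(\hf^n(\hx))$ with $\hx,\hf^n(\hx)\in\hP_i$, these points sit within $O(\eta)+Ce^{-\lambda n}$ of $\pi_0(\hP_i)$ (transversality and chart sizes being uniform on $\hR_\varepsilon$). Hence, for a continuous $\varphi\geq 0$, the contribution of the box $\hP_i$ to $\nu_n(\varphi)$ is at least $d^{-n}\sharp(P_{L_i,n})\cdot\bigl(\inf\varphi\text{ near }\pi_0(\hP_i)\bigr)$, and the counting lemma plus $\eta\to 0$, $\varepsilon\to 0$ gives $\tilde\nu(\varphi)\geq\nu(\varphi)$ with no information at all on how the $\kappa$'s are distributed within a single box. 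Two further imprecisions to fix if you keep your formulation: the inequality $\sum_i\pi_{0*}(\hnu|_{\hP_i})\geq(1-\varepsilon)\nu$ is not literally correct (what is true is that the defect measure has total mass at most $\varepsilon$, which suffices); and disjointness of the $\hP_i$ in $\hat{\proj}^2$ does not by itself guarantee ``no multiple counting'', because the projections $\pi_0(\hP_i)$ and the charts $L_i$ may overlap in $\proj^2$, so when you sum box-wise lower bounds for a nonnegative test function you must argue that (or arrange the covering so that) the same periodic point is not credited to two boxes.
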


\begin{proof}
Since every set can be cover, up to a $\nu$-null set, by Pesin boxes and $\supp(\nu)\subset \Br_\A$, the result follows from the previous Lemma.
\end{proof}

\begin{theoreme}\label{th equi pt per}
Let $f$ be an endomorphism of $\proj^2$ which admits an attracting set $\A$ which $f$  admits an attracting set $\A$. Assume, moreover, $\A$ admits a trapping region satisfying the conditions \ref{cond U rectract}, and that there exists an invariant current  $T^u$ ($\frac{1}{d}f_*T^u=T^u$) with support on $\A$ such that the measure $\nu=T\wedge T^u$ satisfies \ref{cond forte on nu}.   
Then $$\nu_n=\dfrac{1}{d^n}\sum_{\kappa \in Per_n\cap \Br_\A} \delta_\kappa \rightarrow \nu.$$
\end{theoreme}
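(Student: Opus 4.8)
The plan is to prove the two inequalities $\tilde\nu \geq \nu$ and $\tilde\nu \leq \nu$ for any cluster value $\tilde\nu$ of the sequence $(\nu_n)$, which together with the obvious positivity force $\nu_n \rightharpoonup \nu$. The lower bound $\tilde\nu \geq \nu$ is exactly Lemma \ref{equidistrib part sup}, already established via the shadowing lemma (Lemma \ref{Shadowing lemma}): each tube $L^s_n(\hp)$ with $\hp \in \hP \cap \hf^{-n}(\hP)$ carries a genuine periodic point of period $n$, the number of such tubes is at least $d^n \hnu(\hP)(1-\varepsilon)$ by Proposition \ref{nb de tubes}, and mixing lets $\varepsilon \to 0$ and the Pesin boxes exhaust $\supp(\nu)$. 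So the real content is the upper bound: we must show that there are not too many periodic points in $\Br_\A$, i.e. $\limsup_n d^{-n}\,\sharp(Per_n \cap \Br_\A) \leq 1$ and moreover that no mass escapes to points outside $\supp(\nu)$.

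For the global count, I would use the fact that the topological degree of $f^n$ is $d^{2n}$ but — crucially — the relevant bound here is on the number of periodic points lying in the basin $\Br_\A$, not in all of $\proj^2$. Since $\A$ has small topological degree (or $|\Jac f|$ is small on a neighborhood of $\A$), the number of preimages inside $U$ grows like $\delta_t(f)^n$ with $\delta_t < d$... but that controls a wrong-looking quantity. The cleaner route, following \cite{BLS} and \cite{DDG3}, is to compare $\nu_n$ against $\nu$ \emph{via the current $T^u$}: one shows $\frac{1}{d^n}\sum_{\kappa \in Per_n \cap \Br_\A}\delta_\kappa \leq \frac{1}{d^n}(f^n)^*[\ell'] \intgeom T^u + o(1)$ for a suitable transversal or, more directly, that $\tilde\nu$ is an invariant measure of entropy $\log d$ supported on $\A$; by the uniqueness statement (Theorem \ref{th intro}\ref{unicite nu}, proved later in Section \ref{section uniqueness}) — or, to avoid circularity, by a direct argument — $\tilde\nu$ must then equal $\nu$. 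Concretely I would argue: every periodic point $\kappa \in Per_n \cap \Br_\A$ is saddle-hyperbolic for large $n$ (using \ref{cond forte on nu} and the hyperbolicity of measures of maximal entropy on $\A$, via \ref{cond all hyp}), so $\delta_\kappa$ is an atom of a measure of entropy $\leq \log d$; an averaged version shows any cluster value $\tilde\nu$ has entropy $\geq \log d$ hence exactly $\log d$, is supported on $\A$, and $\tilde\nu \geq \nu$; since $\nu$ is the \emph{unique} such measure in $\Br_\A$, equality follows.

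The main obstacle is making the upper bound $\tilde\nu \leq \nu$ rigorous without assuming the uniqueness theorem that comes later — otherwise the logical structure of the paper is circular. The honest fix is to prove the quantitative upper count directly: one needs $\limsup_n d^{-n}\sharp(Per_n \cap \Br_\A) \leq 1$. I would obtain this by intersecting with the current $T^u$: if $p \in Per_n \cap \Br_\A$ is a saddle point, its unstable manifold is a curve through $p$, and summing Dirac masses at such points is dominated by $\frac{1}{d^n}(f^n)^*[\Delta] \intgeom T^u$ where $\Delta$ is a vertical disk with $\frac{1}{d^n}(f^n)^*[\Delta] \to T$; passing to the limit gives total mass $\leq M(T \intgeom T^u) = M(T \wedge T^u) = M(\nu) = 1$. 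Combined with $\tilde\nu \geq \nu$ and $M(\nu) = 1$, this forces $\tilde\nu = \nu$. The delicate points are: (i) checking that periodic points in $\Br_\A \setminus \supp(\nu)$ contribute $o(d^n)$, which follows because any such accumulation would produce an invariant measure of entropy $\log d$ on $\A$ whose existence is excluded (or rather, forced to coincide with $\nu$), and (ii) justifying the geometric-intersection domination uniformly in $n$, for which the common-Lyapunov-chart machinery and the woven structure of $T^u$ from \cite{DDG3} (already invoked in the proof of Corollary \ref{coro de la laminarite}) are exactly what is needed. Once both inequalities are in hand, $\nu_n \rightharpoonup \nu$ is immediate, and restricting the sum to $\Br_\A$ (rather than all of $\proj^2$) is legitimate because $\supp(\nu) \subset \A \subset \Br_\A$ and periodic points outside $\Br_\A$ are irrelevant to the cluster value.
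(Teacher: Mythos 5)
Your lower bound is exactly the paper's: Lemma \ref{equidistrib part sup} via the shadowing lemma and the tube count, so that any cluster value $\tilde\nu$ satisfies $\tilde\nu\geq\nu$. The problem is the upper bound, where your proposal has a genuine gap. Both routes you sketch are unsubstantiated. The entropy route fails because a cluster value of the atomic measures $\frac{1}{d^n}\sum_{\kappa\in Per_n\cap\Br_\A}\delta_\kappa$ has no a priori reason to have entropy $\log d$ (entropy is not upper semicontinuous, and equidistribution along periodic orbits does not by itself produce large entropy), so you cannot invoke uniqueness of the measure of maximal entropy --- quite apart from the circularity you yourself flag. The intersection route fails at the claimed domination $\frac{1}{d^n}\sum_{\kappa}\delta_\kappa\leq\frac{1}{d^n}(f^n)^*[\Delta]\intgeom T^u+o(1)$: a periodic point in $\Br_\A$ need not lie on $\supp(T^u)$, nor on $f^{-n}(\Delta)$ for your chosen transversal, nor need it be a saddle, so there is no mechanism forcing each such point to register in that geometric intersection. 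Similarly, your claim (i), that periodic points in $\Br_\A\setminus\supp(\nu)$ contribute $o(d^n)$ because an accumulation ``would produce an invariant measure of entropy $\log d$'', is a non sequitur for the same reason.

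The telltale sign is that your argument never uses the hypothesis \ref{cond U rectract}, which is precisely what the paper's upper bound rests on. The actual proof is topological: since $f(U)\Subset U$ and $\Br_\A=\bigcup_n f^{-n}(U)$, every periodic point of $\Br_\A$ lies in $U$; since $\overline{U}$ is a euclidean retract of the curve $\ell$ and $f(\overline U)\Subset\overline U$, the Lefschetz--Hopf fixed point theorem applies and counts the period-$n$ points in $\overline U$ as $\sum_i \mathrm{Trace}\bigl((f^n)^*|_{H^i(\overline U,\Q)}\bigr)$; because $\overline U$ retracts onto $\ell\simeq\proj^1$, this trace is computed on $H^*(\proj^1,\Q)$ and equals $d^n+1$ (the map $f$ has no curve of fixed points since its restriction to an invariant curve has entropy $\log d>0$). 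Hence every cluster value of $\nu_n$ has mass at most $1$, which combined with $\tilde\nu\geq\nu$ and $M(\nu)=1$ forces $\tilde\nu=\nu$. Without this cohomological count (or a genuine substitute bounding $\sharp(Per_n\cap\Br_\A)$ by $d^n+o(d^n)$), your proof does not close.
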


 \begin{proof}
The restriction of $f$ to an  invariant curve is of topological entropy $\log(d)>0$ so $f$ cannot have a curve of  fixed  points.
Since $f(U)\Subset U$, $\A=\underset{n\in\N}{\bigcap}f^n(U)$ and $\Br_\A=\underset{n\in\N}{f^{-n}(U)}$, there is no fixed points in $\Br_\A\setminus U$.
The compact set $\overline{U}$ is an euclidean retract, see \cite[Proposition/Definition IV 8.5]{dol}, and $f(\overline{U})\Subset \overline{U}$ is compact.
Hence by  Lefschetz-Hopf theorem, see \cite[Proposition VII 6.5]{dol}, the number of  periodic  points of period $n$ in 
 $\overline{U}$ is $\sum \text{Trace}\left((f^n)^*_{|H^i(\overline{U},\Q)}\right)$.
 
Since $\overline{U}$ retracts on $\ell$,  $H^i(\overline{U},\Q))=H^i(\ell,\Q))=H^i(\proj^1\C,\Q))$ and for a generic line $\Delta\subset U$ we have  $f^n_*\Delta \cdot \Delta=d^n$, thus $ \sharp (Per_n\cap U) =d^n +1$.
Thereby, every cluster value of $\nu_n=\dfrac{1}{d^n}\sum_{\kappa \in Per_n\cap U} \delta_\kappa$ has mass 1 and we conclude with Lemma \ref{equidistrib part sup}.
 \end{proof}

 
 \section{Uniqueness of the measure of maximal entropy}\label{section uniqueness}

We assume that $f$ is an endomorphism of $\proj^2$ admitting a non trivial attracting set $\A$ and that conditions  \ref{cond unicite}, \ref{cond on nu} and \ref{cond all hyp} are satisfied

We still denote by $\Br_\A$ the basin of attraction of $\A$, $T$ the Green current of $f$ and $T^u$ the attracting current. The aim of this section is to prove the following Theorem:

\begin{theoreme}\label{th unicité mesure entropie max}
Let $f$ be an endomorphism of $\proj^2$ admitting a non trivial attracting set $\A$  which satisfies the condition \ref{cond unicite}, \ref{cond on nu} and \ref{cond all hyp}  then $\nu=T\wedge T^u$ is the unique measure of maximal entropy $\log(d)$ in $\Br_\A$.
\end{theoreme}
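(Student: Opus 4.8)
The plan is to establish uniqueness by combining the equidistribution of periodic points (Theorem~\ref{th equi pt per}) with an entropy/conditional-measure argument in the style of \cite{BLS}. First I would recall that under the hypotheses \ref{cond unicite}, \ref{cond on nu}, \ref{cond all hyp} we already know, by Theorems~\ref{th dinh,da-ta} and~\ref{th equi pt per}, that $\nu=T\wedge T^u$ is ergodic of entropy $\log d$, hyperbolic of saddle type, with $\supp(\nu)\cap\supp(\mu_{eq})=\emptyset$, and that the conditionals of $\hnu$ on the partition $\hxi^u$ subordinate to unstable manifolds are induced by $T$ (Proposition~\ref{prop conditionnelle induite par green}). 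Let $\m$ be any ergodic invariant probability measure supported in $\Br_\A$ with $h_\m(f)=\log d$. Since $\Br_\A\cap\supp(\mu_{eq})=\emptyset$, Margulis--Ruelle gives $\m$ a positive Lyapunov exponent; by \ref{cond all hyp}, $\m$ also has a nonpositive Lyapunov exponent, so in fact $\chi_u(\m)>0\geq\chi_s(\m)$ and $\m$ is of saddle type (if $\chi_s=0$ one still gets a stable/center direction; I would check that the arguments of Section~\ref{section structure geometrique}, which only require $\chi_u>0\geq\chi_s$, still apply).

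Next I would run the conditional-measure argument. Apply Proposition~\ref{prop existence partition sub à W^u} to $\m$ to get a generating partition $\hxi^u_\m$ of $\hat{\proj}^2$ subordinate to the Pesin unstable manifolds of $(\hf,\hm)$, refined so that each $\pi_0(\hxi^u_\m(\hx))$ lies in a Lyapunov box. Then Proposition~\ref{prop conditionnelle induite par green} applies verbatim to $\m$ (it only uses maximal entropy $\log d$, support off $\supp(\mu_{eq})$, and $\chi_u>0\geq\chi_s$), giving that the conditionals of $\hm$ on $\hxi^u_\m$ are $\hm(\cdot\,|\,\hxi^u_\m(\hx)) = (\pi_0^{-1})_*\bigl(T\intgeom[D^u_\m(\hx)]\bigr)/M(\cdots)$, i.e. also induced by the Green current $T$ along the unstable disks. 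So both $\nu$ and $\m$ have the same "transverse" description: their unstable conditionals are slices of $T$.

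The remaining point is to upgrade "same unstable conditionals" to $\m=\nu$. Here I would use the uniqueness \ref{cond unicite} of the invariant current $T^u\in\Cc_{(1,1)}(U)$ together with the laminar structure built in Section~\ref{section construction}. The idea: the family of unstable manifolds of $\m$ (with the $T$-conditional transverse measure reconstructed as a closed current) produces an invariant positive closed current in $\Cc_{(1,1)}(U)$; by \ref{cond unicite} it must equal $T^u$, so $\m$ lives on the unstable lamination of $T^u$ and its disintegration against $T$ forces $\m=T\wedge T^u=\nu$. Concretely I would argue as in \cite[Section~6--8]{BLS}: build $T^s_{\hat P}$ and $T^u_{\hat P}$ as in Theorem~\ref{constr. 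TshP} and \cite[Theorem~1.6]{DDG3}, show $\m$ is absolutely continuous with respect to the geometric product $T^s_{\hat P}\intgeom T^u_{\hat P}$ on each Pesin box (using that its unstable conditionals are $T$-slices and, after the holonomy invariance of Proposition~\ref{inv holonomie}, its stable conditionals are determined too), and conclude $\m|_{\hat P}$ is proportional to $\nu|_{\hat P}$; summing over a full-measure family of disjoint Pesin boxes gives $\m=\nu$.

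The main obstacle I expect is the last paragraph: without the assumption $\chi_s<0$ strictly (only $\chi_s\leq 0$ from \ref{cond all hyp}), one must be careful that $\m$ genuinely admits local stable manifolds and a product structure, and that the holonomy-invariance machinery of Proposition~\ref{inv holonomie}, which was set up for $\nu$, transfers to $\m$ — equivalently, that the stable conditionals of $\m$ are holonomy images of a single transverse slice of $T^s$, which is what pins $\m$ down transversally. A clean alternative that sidesteps this is to instead invoke Theorem~\ref{th equi pt per}: since the $\nu_n=d^{-n}\sum_{\kappa\in Per_n\cap\Br_\A}\delta_\kappa$ converge to $\nu$, and since by the Shadowing Lemma~\ref{Shadowing lemma} every ergodic measure of maximal entropy supported in $\Br_\A$ is approximated by the saddle periodic points it shadows, any such $\m$ is a cluster value of measures built from $Per_n\cap\Br_\A$ and hence equals $\nu$; I would present the conditional-measure proof as the main line and remark on this as the short route.
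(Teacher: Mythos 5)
Your first half is exactly right and matches the paper: reduce to an ergodic $\m$ of entropy $\log d$ supported in $\Br_\A$, note that \ref{cond all hyp} and Margulis--Ruelle give $\chi_u>0\geq\chi_s$, and apply Propositions \ref{prop existence partition sub à W^u} and \ref{prop conditionnelle induite par green} to conclude that the conditionals of $\hm$ on a partition subordinate to unstable manifolds are normalized slices $T\intgeom[D^u(\hx)]$. The gap is in the step that is supposed to pin $\m$ down transversally. Your main line asks for a local product structure for $\m$: stable manifolds for $\m$, Pesin boxes, and a transfer of the holonomy invariance of Proposition \ref{inv holonomie} to the stable conditionals of $\m$. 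But \ref{cond all hyp} only gives a \emph{non-positive} exponent, so $\m$ may have $\chi_s=0$ and then has no local stable manifolds at all; moreover Proposition \ref{inv holonomie} is a statement about slices of $T$ along the stable lamination attached to the saddle measure $\nu$, and gives no information about the conditionals of an arbitrary competing measure $\m$ on that lamination. You flag this obstacle yourself but do not resolve it, and the proposed shortcut does not repair it: the Shadowing Lemma \ref{Shadowing lemma} is formulated for Pesin boxes of a hyperbolic saddle measure and is not available for $\m$, and in any case ``periodic points equidistribute towards $\nu$'' (Theorem \ref{th equi pt per}) does not imply that every ergodic measure of maximal entropy is a cluster value of the measures $\nu_n$; that implication would itself require a closing-type argument with matching weights, which is precisely what is missing.

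The paper's proof avoids stable-manifold data for $\m$ entirely, and this is the missing idea. After the step you did carry out, it uses the uniqueness hypothesis \ref{cond unicite} in a dynamical way: since $D^u(\hx)$ is an unstable disk contained in the trapping region after finitely many iterates, every cluster value of $\frac{1}{d^n}f^n_*[D^u(\hx)]$ is a closed positive $(1,1)$-current supported in $U$, hence equals $\rho(\hx)\,T^u$; because $T$ has continuous potential this yields
\begin{equation*}
f^n_*\Bigl(\tfrac{1}{\rho(\hx)}\,T\intgeom[D^u(\hx)]\Bigr)\rightarrow \nu .
\end{equation*}
On the other hand Birkhoff's theorem gives $\frac{1}{n}\sum_{i=0}^{n-1}\hf^i_*(\hm_\hx)\rightharpoonup\hm$ for the conditional measures $\hm_\hx$. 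Comparing the two limits through the projections $\pi_{-i}$ (using $\pi_{-n}(\hX)\subset f^{-(n-i)}\pi_{-i}(\hX)$ and $\frac{1}{d}f^*T=T$) gives $\hm(\hX)\leq\nu(\pi_{-i}(\hX))$ for all $i$, hence $\hm\leq\hnu$, and equality follows since both are probability measures. This argument uses only the unstable conditionals of $\m$ together with \ref{cond unicite}; no product structure, holonomy, or stable manifolds for $\m$ are needed. As written, your proposal does not contain this (or any complete) mechanism for the final identification, so it has a genuine gap.
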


To prove this theorem we follow the approach of \cite{BLS}.\\

\begin{proof}[Proof of Theorem \ref{th unicité mesure entropie max}]
Let $\m$ be a $f$-invariant measure with support in $\Br_\A$ and of (maximal) entropy $\log(d)$ then, by Choquet representation theorem, $\nu$ can be written as an integral of ergodic measures which also are of maximal entropy $\log(d)$, since the metrical entropy is concave. So we only have to prove that $\nu$ is the only ergodic measure of maximal entropy in $\Br_\A$.
Let $\m$ be an ergodic measure of maximal entropy with support in $\Br_\A$.
By \ref{cond all hyp}, $\m$  admits a non positive Lyapunov exponent.

The measure $\hm$ is also ergodic and, by Birkoff theorem, for every continuous function $\varphi$ and $\hm$-a.e.  $\hx$ we have 
\begin{equation}\label{utilisation de birkoff}
\underset{n\rightarrow \infty}{\lim} \dfrac{1}{n}\sum_{i=0}^{n-1}\varphi(\hf^n(\hx))=\int \varphi(\hx)\ds \hm(\hx).
\end{equation}
Denote $\hm_\hx:= \hm(\cdot|\hxi^u)$ then, by dominate  convergence and \eqref{utilisation de birkoff}, we have 
\begin{eqnarray*}
\underset{n\rightarrow \infty}{\lim} \int_{\hxi^u(\hx)} \varphi(\hy) \ds \left(\dfrac{1}{n}\sum_{i=0}^{n-1} \hf^n_*(\hm_\hx)\right)(\hy) 
& =& \underset{n\rightarrow \infty}{\lim} \int_{\hxi^u(\hx)} \dfrac{1}{n}\sum_{i=0}^{n-1}\varphi(\hf^n(\hy)) \ds \hm_\hx(\hy)\\
& = &\int_{\hxi^u(\hx)} \left(\underset{n\rightarrow \infty}{\lim} \dfrac{1}{n}\sum_{i=0}^{n-1}\varphi(\hf^n(\hy)) \right)\ds \hm_\hx (\hy) \\
& = & \int \varphi(\hy)\ds \hm(\hy)
\end{eqnarray*}
hence 
\begin{equation}\label{cv vers hm}
\dfrac{1}{n}\sum_{i=0}^{n-1} \hf^n_*(\hm_\hx)\rightharpoonup \hm.
\end{equation}

One the other hand, for every $\hx$, since $$M\left(\dd^c \left(\dfrac{1}{d^n}f^n_*[D^u(\hx)]\right)\right)= M\left(\dfrac{1}{d^n}f^n_*\dd^c \left([D^u(\hx)]\right)\right)=O\left(\dfrac{1}{d^n}\right),$$
every cluster value of  $\left(\frac{1}{d^n}f^n_*[D^u(\hx)]\right)$ is a positive closed current of support $M([D^u(\hx)])$ with support in $\A$, 
see \cite{DINH} for more details.
By the condition \ref{cond unicite}, we have
$$\frac{1}{d^n}f^n_*[D^u(\hx)]\rightarrow c \, T^u$$ with $c=M([D^u(\hx)])$.
The Green current $T$ has a continuous potential so 
\begin{equation*}
\dfrac{1}{\rho(\hx)} T\intgeom\left(\frac{1}{d^n}f^n_*[D^u(\hx)]\right)\rightarrow \frac{c}{\rho(\hx)}  T\intgeom  T^u=\frac{c}{\rho(\hx)}  \nu.
\end{equation*}
So $c=\rho(\hx)$, since $\dfrac{1}{\rho(\hx)} T\intgeom[D^u(\hx)]$ and $\nu$ are probability measures.
Thus we have 
\begin{equation}\label{cv vers nu}
f^n_*\left(\dfrac{T\intgeom[D^u(\hx)]}{\rho(\hx)} \right)\rightarrow \nu.
\end{equation}

Thanks to Proposition \ref{prop conditionnelle induite par green}, we know that $\hm_\hx=(\pi_0^{-1})_*\left(\frac{1}{\rho(\hx)}T\intgeom[D^u(\hx)]\right)$ but \eqref{cv vers nu} is not enough to conclude that 
$$\hf^n_*(\hm_\hx)=f^n_*\left((\pi_0^{-1})_*\dfrac{T\intgeom[D^u(\hx)]}{\rho(\hx)} \right)\rightarrow \hnu.$$
However, for all $\hX\subset\hA$ we have
\begin{eqnarray*}
\hf^n_*(\hm_\hx)(\hX) & = & \dfrac{1}{\rho(\hx)}T\intgeom [D^u(\hx)]\left(\pi_0(\hf^{-n}(\hX))\right)\\ 
& = &\dfrac{1}{\rho(\hx)} T\intgeom [D^u(\hx)]\left(\pi_{-n}(\hX)\right).
\end{eqnarray*}
If $0\leq i \leq n$ then $\pi_{-n}(\hX)\subset f^{-(n-i)}\pi_{-i}(\hX) $ so
\begin{eqnarray*}
\hf^n_*(\hm_\hx)(\hX) & \leq &\dfrac{1}{\rho(\hx)} T\intgeom [D^u(\hx)]\left(f^{-(n-i)}\pi_{-i}(\hX))\right)\\
& \leq & \dfrac{1}{\rho(\hx)} T\intgeom \dfrac{1}{d^{-(n-i)}} f^{-(n-i)}_*[D^u(\hx)]\left(\pi_{-i}(\hX)\right).
\end{eqnarray*}
We let $n$ go to the infinity, and, by  \eqref{cv vers hm} and \eqref{cv vers nu}, we obtain that for every $i\in\N$
$$\hm(\hX) \leq \nu\left(\pi_{-i}(\hX)\right),$$
so $\hm\leq \hnu$.
But $\hm$ and $\hnu$ are probability measures so $\hm=\hnu$, and $\m=\nu$.
This end the proof of Theorem \ref{th unicité mesure entropie max}.
\end{proof}

\section{Further remarks}\label{section rmk}
\subsection{Hypothesis for Theorem \ref{th laminaire dans bassin}}\label{section hyp th lam bassin}

\subsubsection{Previously known settings}

The conditions given in the introduction may do reformulate thanks to \cite{moi:taf,DINH,taf}.
In fact, in \cite{DINH}, T.C. Dinh prove that if
$U$ contains an image of  $\proj^1(\C)$  and $\proj^2\setminus U$ is start-shapped, then $U$ support a natural positive closed current $T^u$ of bidegree $(1,1)$, and $\nu=T\wedge T^u$ is mixing of entropy $\log(d)$.

  \begin{rmk}
It is not clear that the fact that $\supp(\nu)$ is include in an attracting set is enough to ensure that $\nu$ admits a negative Lyapunov exponent.
 \end{rmk}

If we further assume that $f$ is of small topological degree on $U$  then $f$ satisfies \ref{cond unicite} and \ref{cond forte on nu}, see \cite{moi:taf}. This two conditions are also true in the setting of \cite{taf}, i.e. if  for all $x\in U, \,  ||D_x f||<1 $  and there exist a point $ I\notin U$ and a line $ \ell\subset U$  such that  for all $x\in \ell $ the set $I(x)\cap U\subset I(x)\setminus I\simeq \C^2$ is strictly  convex, where $ I(x)$  is the line passing through $ I$  and $x.$

 If instead we further assume that the rational hull $r(K)$ of the compact set $K= \proj^2\setminus U$ (see \cite[Definition 2.1]{guedj}) does not intersect $\A$, then $f$ satisfies \ref{cond CV}.
This follows from \cite[Lemma 2.7]{guedj}  and \cite[Theorem 4.6]{DINH}.

\subsubsection{New examples}

In practice, the only examples known that satisfy \ref{cond CV} (and also all the assumptions in section \ref{section assumptions}) are the perturbations of the line at infinity exposed in the introduction.
 In this section, we present new examples.
 
We fix the following notations:
 $$F_\theta:[x:y:z]\mapsto [x^2:y^2:xy+\theta(z^2-xy)]$$
$$U=\{[x:y:z]\, | \, |z^2-xy|\leq \delta \max(|x|,|y|,|z|)^2\}$$
and $X=x^2,Y=y^2, Z=xy+\theta(z^2-xy) $.
Assume that $0<|\theta |\leq \theta_0$ and that $\theta_0$ and $\delta$ are small, then $[0:0:1]\notin U$ and
for all $[x:y:z]\in U$, $\theta_0|z^2|\leq \max (|x|^2,|y|^2)\leq \max(|X|,|Y|,|Z|)$. Thus for $[x:y:z]\in U$ we have $\theta_0\max(|x|,|y|,|z|)^2\leq  \max(|X|,|Y|,|Z|)$ and
\begin{eqnarray}
|Z^2-XY|&=& |\theta (z^2-xy) | |xy-Z|\\ \nonumber
&\leq & \delta \theta_0 \max(|x|,|y|,|z|)\cdot 2\max(|X|,|Y|,|Z|)\\ \nonumber
&\leq & 2\delta \max(|X|,|Y|,|Z|)^2 \nonumber
\end{eqnarray}
So $F_\theta(U)\subset \subset U$ and $\underset{n\geq 0}{\bigcap}F_\theta^n(U)=\{z^2=xy\}$, i.e. the conic $\{z^2=xy\}$ is an attracting set for $F_\theta$.

\medskip

The trapping region $U$ does not satisfy the hypotheses of Dinh \cite{DINH} but we are going to prove :

\begin{prop}
If $f$ is an endormorphism of $\proj^2$ such that $U=\{[x:y:z]\, | \, |z^2-xy|\leq \delta \max(|x|,|y|,|z|)^2\}$ is a trapping region for $f$ then $f$ satisfy \ref{cond CV}.
\end{prop}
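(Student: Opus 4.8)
The condition \ref{cond CV} requires that for every $(1,1)$-form $\phi$ with continuous coefficients and support in $\Br_\A$, we have $\frac{1}{d^n}f^n_*\phi \to \langle T,\phi\rangle\, T^u$ for a fixed current $T^u\in C_{(1,1)}(U)$. The plan is to verify that $U$ satisfies the structural hypotheses under which such a convergence statement is already known, namely those of \cite{DINH}, possibly combined with \cite[Lemma 2.7]{guedj} and \cite[Theorem 4.6]{DINH} as indicated in the previous subsection; the point is that although $U$ is \emph{not} literally of the form used by Dinh (it is not the complement of a star-shaped set, since $\partial U$ is built from the quadric $z^2=xy$), its essential features can be recovered after a suitable modification.

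First I would identify the attracting set: as computed above, $\A=\bigcap_{n}f^n(U)=\{z^2=xy\}$, which is the image of $\proj^1$ under $[s:t]\mapsto[s^2:t^2:st]$, so $U$ does contain an image of $\proj^1(\C)$. Second, I would analyze the geometry of $K=\proj^2\setminus U=\{|z^2-xy|>\delta\max(|x|,|y|,|z|)^2\}$. The natural move is to pass to the double cover $\pi:\proj^2\to\proj^2$, $[u:v:w]\mapsto[u^2:v^2:uw]$ (or rather work in the chart where the quadric becomes a line), under which the quadric $\{z^2=xy\}$ is the pullback of a line and $U$ pulls back to (a neighbourhood of) a set whose complement \emph{is} star-shaped with respect to a point not in the pullback of $U$. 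Concretely, writing $\zeta=z^2-xy$ one checks that in appropriate affine coordinates the defining inequality of $K$ becomes convex/star-shaped in the fibre directions, so that the hypotheses of \cite[Theorem 4.6]{DINH} apply to the lifted dynamics. Third, I would transfer the conclusion back down: the convergence $\frac{1}{d^n}f^n_*\phi\to\langle T,\phi\rangle T^u$ is stable under the finite pushforward $\pi_*$ (using that $\pi$ is a finite holomorphic map and that $\pi^*T=2T$, $\pi$ commuting appropriately with the lifted map), which yields the desired statement for $f$ on $\proj^2$, with $T^u=\pi_*(\widetilde T^u)/\deg\pi$ for the lifted attracting current $\widetilde T^u$.

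Alternatively, and perhaps more cleanly, I would argue directly without a cover: the key input in \cite{DINH} is a \emph{uniform lower bound on the mass of slices} $\frac{1}{d^n}f^n_*\phi$ against $T$, coming from the fact that $\partial U$ meets every fibre of a pencil of lines through the excluded point in a star-shaped region, which forces a lamination-type control. For the quadric case one replaces the pencil of lines through a point by the pencil of lines through $I=[0:0:1]$: the bound $\theta_0\max(|x|,|y|,|z|)^2\le\max(|X|,|Y|,|Z|)$ derived above already shows $I\notin U$ and gives the needed contraction estimate $|Z^2-XY|\le 2\delta\max(|X|,|Y|,|Z|)^2$, i.e. $f$ genuinely contracts $U$ toward the quadric at a definite rate. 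From this I would (i) show $\frac{1}{d^n}f^n_*\dd^c\phi\to 0$ and $\frac{1}{d^n}f^n_*d^c\phi\to 0$ exactly as in \cite[Proposition 4.7]{DINH} (this is soft and only uses $f(U)\Subset U$), and (ii) show that any cluster value $S$ of $\frac{1}{d^n}f^n_*\phi$ is a positive closed $(1,1)$-current supported in $\A$; since $\A=\{z^2=xy\}$ is an \emph{irreducible curve}, the only positive closed $(1,1)$-currents supported on it are multiples of the current of integration $[\{z^2=xy\}]$, hence $S=\lambda[\{z^2=xy\}]$ with $\lambda$ determined by $\langle S,\omega_{FS}\rangle=\langle T,\phi\rangle\cdot\langle[\{z^2=xy\}],\omega_{FS}\rangle$ via cohomology (using $\deg\phi$ is carried along by $\frac1{d^n}f^{n}_*$ and $f^*T=dT$). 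This pins down the limit uniquely, giving $T^u=[\{z^2=xy\}]/2$ up to normalization, and proves \ref{cond CV}.

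The main obstacle I expect is step (ii) in the second approach — controlling \emph{where} cluster values are supported and ruling out mass escaping to, or concentrating improperly near, the indeterminacy locus or $\partial U$. One must check that $\frac{1}{d^n}f^n_*\phi$ does not lose mass and that no component of a limit lives outside $\A$; this is where the explicit contraction inequality $|Z^2-XY|\le 2\delta\max(|X|,|Y|,|Z|)^2$ and the fact that $\proj^2\setminus U$ does not meet the rational hull of $\A$ (in the spirit of the \cite{guedj}–\cite{DINH} argument cited just above) must be invoked carefully. The irreducibility of the quadric is what makes the endgame easy once support is under control, so the whole proof hinges on getting the support of limits right.
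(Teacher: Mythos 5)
The decisive gap is in your second ("direct") approach: it identifies the attracting set $\A=\bigcap_n f^n(U)$ with the conic $\{z^2=xy\}$ and then uses the support theorem for positive closed $(1,1)$-currents on an irreducible curve to pin down all cluster values. That identification is only available for the particular model map $F_\theta$; the proposition concerns an \emph{arbitrary} endomorphism $f$ for which $U$ is a trapping region, and for such $f$ the attracting set need not be the conic, nor algebraic at all (non-algebraic attracting sets are precisely the situation the paper is aiming at, cf.\ the remark about $\tilde F_\theta$ right after the proposition). Knowing only that cluster values of $\frac{1}{d^n}f^n_*\phi$ are positive, closed and supported in $\A$ gives no uniqueness of the limit, and that uniqueness is the whole content of \ref{cond CV}. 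The paper's proof supplies exactly what is missing: it first shows $r(K)=K$ for $K=\proj^2\setminus U$ by exhibiting, through every point of $U$, an explicit conic $\{a_i^2(x_1x_2-x_3^2)=x_i^2(a_1a_2-a_3^2)\}$ contained in $U$, and then redoes the single step of Dinh's argument that uses star-shapedness of the complement, namely the construction of structural disks: the auxiliary family $(\alpha,p)\mapsto(\alpha,\lambda_\alpha(A)\circ F_{\alpha\theta}(p))$ with $\lambda_\alpha(A)=(1-\alpha)A$, extended across $\alpha=0$ by Harvey--Polking, connects any current of $\Cc_{(1,1)}(U)$ to a fixed current $R_0$ independent of $R$; the rest is Dinh's proof verbatim together with \cite[Lemma 2.7]{guedj}. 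Nothing in your proposal plays the role of this interpolation, so for general $f$ the existence and uniqueness of $T^u$ and the convergence remain unproved.

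Your first approach (branched cover) does not get off the ground: the map $[u:v:w]\mapsto[u^2:v^2:uw]$ is not a finite cover of $\proj^2$ (it has the indeterminacy point $[0:0:1]$), the quadric pulls back to $u^2(w^2-v^2)=0$ rather than to a line, and, more fundamentally, there is no reason an arbitrary $f$ lifts to any such cover, so "the lifted dynamics" is not defined. Even in the special case $f=F_\theta$, where your direct argument can indeed be completed and is simpler than the paper's route, you would still need to address two technical points: closedness of cluster values when $\phi$ has merely continuous coefficients (treat smooth $\phi$ first and approximate using positivity and uniform mass bounds), and the normalisation $\langle \frac{1}{d^n}f^n_*\phi,\omega_{FS}\rangle\to\langle T,\phi\rangle$, which uses the continuity of the Green potential; and note that your mass computation should give the limit $\langle T,\phi\rangle\,[\{z^2=xy\}]/2$, not $\langle T,\phi\rangle\,[\{z^2=xy\}]$.
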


\begin{rmk}
Even if it seems possible, it is not clear how to exhibit an example such that no trapping region satisfy Dinh's hypotheses.
We may consider the family of maps
$$\tilde{F}_\theta[x:y:z]\mapsto [P(x,y,z)^2:Q(x,y,z)^2:P(x,y,z)Q(x,y,z)+\theta(z^2-xy)^d],$$
where  $P,Q$ are homogeneous polynomials of $\C[X,Y,Z]$ of degree $d\geq 1$ such that, for all $\theta\neq 0$, $\tilde{F}_\theta$ is an endomorphism of $\proj^2$, and the indeterminacy points of $\tilde{F}_0$ are not on$\{z^2=xy\}$.
\end{rmk}

\begin{lemma}
The rational hull $r(K)$ of the compact set $K= \proj^2\setminus U$ (see \cite[Definition 2.1]{guedj}) is equal to $K$.
\end{lemma}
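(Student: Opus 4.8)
The plan is to prove the two inclusions separately. The inclusion $K\subseteq r(K)$ holds for any compact set (any curve through a point of $K$ meets $K$ at that point), so all the content is in the reverse inclusion $r(K)\subseteq K$. I would use the following consequence of \cite[Definition 2.1]{guedj}: a point $p$ lies outside $r(K)$ as soon as there is an algebraic curve through $p$ disjoint from $K$, i.e. entirely contained in the open set $U=\proj^2\setminus K$. So it suffices to show that through every point of $U$ there passes a conic contained in $U$.

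First I would fix $p=[x_0:y_0:z_0]\in U$ and normalise the homogeneous coordinates so that $\max(|x_0|,|y_0|,|z_0|)=1$. Put $w_0=z_0^2-x_0y_0$; by definition of $U$ we have $|w_0|\leq\delta$. Let $q$ be the square of whichever of the three coordinates of $p$ has modulus $1$, so that $q\in\{x^2,y^2,z^2\}$ and $|q(x_0,y_0,z_0)|=1$; set $\epsilon=w_0/q(x_0,y_0,z_0)$, so that $|\epsilon|=|w_0|\leq\delta<1$, and consider the conic
$$C_p=\{[x:y:z]\mid z^2-xy=\epsilon\, q(x,y,z)\}.$$
This is a genuine curve, not all of $\proj^2$: in $z^2-xy-\epsilon q$ the coefficient of $z^2$ equals $1$ when $q=x^2$ or $q=y^2$, and equals $1-\epsilon\neq 0$ when $q=z^2$ (here one uses $|\epsilon|\leq\delta<1$). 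By the choice of $\epsilon$ we have $z_0^2-x_0y_0=w_0=\epsilon\, q(x_0,y_0,z_0)$, so $p\in C_p$. Finally, for any $[x:y:z]\in C_p$,
$$|z^2-xy|=|\epsilon|\,|q(x,y,z)|\leq\delta\max(|x|,|y|,|z|)^2,$$
since $|q(x,y,z)|\leq\max(|x|,|y|,|z|)^2$ for $q\in\{x^2,y^2,z^2\}$; hence $C_p\subseteq U$.

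This produces, for every $p\in\proj^2\setminus K=U$, an algebraic curve through $p$ avoiding $K$, so $p\notin r(K)$; therefore $r(K)\subseteq K$, and combined with $K\subseteq r(K)$ we conclude $r(K)=K$. There is essentially no deep obstacle here; the only points requiring care are that $C_p$ must not degenerate to the whole plane (handled by $\delta<1$), that the chosen quadratic form must not vanish at $p$ (handled by taking $q$ to be the square of a coordinate of modulus one after normalising), and the characterisation of the rational hull recalled at the outset.
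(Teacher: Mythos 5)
Your proposal is correct and follows essentially the same route as the paper: the paper also exhibits, through any $p=[a_1:a_2:a_3]\in U$ with $|a_i|=\max_j|a_j|$, the conic $a_i^2(z^2-xy)=x_i^2(a_3^2-a_1a_2)$, which is exactly your $C_p$ with $q=x_i^2$ and $\epsilon=w_0/a_i^2$, and concludes via the same characterisation of the rational hull. Your added checks (nondegeneracy of the conic and the trivial inclusion $K\subseteq r(K)$) are harmless refinements of the same argument.
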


\begin{proof}
Let $[a_1:a_2:a_3]\in U$ and chose $i$ such that $|a_i|= \max(|a_1|,|a_2|,|a_3|)$, so $|a_1 a_2-a_3^2|<\varepsilon |a_i|^2$. Denote by $\widetilde{\Cc}$ the conic $\widetilde{\Cc}=\{[x_1:x_2:x_3]\, :\, a_i^2(x_1 x_2-x_3^2)=x_i^2(a_1 a_2-a_3^2)\}$. Let $[x_1:x_2:x_3]\in \widetilde{\Cc}$ then
$$|x_1 x_2-x_3^2|=\dfrac{|a_1 a_2-a_3^2|}{|a_i^2|}|x_i|^2 < \varepsilon \max(|x_1|,|x_2|,|x_3|).$$
Thus the conic $\widetilde{\Cc}$ is included in $U$ and contains $[a_1:a_2:a_3]$.
\end{proof}

Thanks to \cite[Lemma 2.7]{guedj}, we only have to see how to adapt Dinh's proofs to get the conclusion of Theorem 1.1 and  Theorem 4.6 of \cite{DINH}.
Let $R$ be a positive closed current with continuous coefficients and support in $U$. 
The only time Dinh uses the assumption on the geometry of $U$ is in the section 3 to construct the structural disks.
Here is how we can do it in this situation.

Fix a chart $W$ of $Aut(P^2)$ containing $\id$ and local holomorphic coordinates $A$, such that $||A|| < 1$ and  $A = 0$ at id.
Denote by :
\begin{itemize}
\item $W' \Subset W$ a small neighbourhood of $\id$,
\item $U'$ an open set such that $f(U)\Subset U'\Subset U$,
\item $V$ a simply connected neighbourhood of the interval $[0,1]$ in $\C$,
\item for $\alpha \in \C$ and $||A||\leq \min(1,|1-\alpha|^{-1})$, $\lambda_\alpha (A):= (1-\alpha)A$,
\item $\pi_1, \pi_2$ the projections of $V\times\proj^2$ to the first and second coordinates.
\end{itemize}

We choose $V,W',\theta$ small enough such that for all $\alpha \in V$, $A\in W'$, and all $p\in F_\theta^{-1}(U')$ we have $\lambda_\alpha(A)\circ F_{\alpha\theta} (p)\in U$.
For all $A\in W'$, denote by:
\begin{eqnarray}\nonumber
\Fc_A : V\times\proj^2 &\rightarrow & V\times \proj^2\\ \nonumber
 (\alpha, p)&\mapsto & (\alpha, \lambda_\alpha(A)\circ F_{\alpha\theta}(p)) 
\end{eqnarray}
This is a holomorphic endomorphism outside $\{0\}\times \proj^2$. Since $\Fc(\{0\}\times \proj^2)=\{0\}\times A(\Cr)$, we can extend trivially the current
$$\Rr_A=\dfrac{1}{d}\Fc_{A*}\left(\pi_2^*\left(\dfrac{1}{d} F_\theta ^*R\right)\right)$$
to $V\times\proj^2$ as a positive closed current, see \cite{har:pol}.
We define $R_{\alpha,A}$ to be the structural discs $R_{\alpha,A}=<\Rr_A,\pi_2,\alpha>$, see \cite[Appendix A]{DINH} for notations.
Let $\rho$ be a smooth positive probability measure with compact support in $V$ and denote 
$$R_\alpha=\int R_{\alpha,A} \, \ds \rho (A).$$
We have $R_1=\dfrac{1}{d} F_{\theta*}\left(\dfrac{1}{d} F_\theta ^*R\right)=R$, since $\lambda_1(A)=\id_{\proj^2}$, and $R_0=\int A_*[\Cr] \, \ds \rho (A)$ is independent of $R$.

\medskip

The end of the proof is exactly the same than the one of Dinh.

\subsection{Around Theorem \ref{th laminarite sans ens. att.} } 

Let $\nu$ be as in theorem \ref{th laminarite sans ens. att.}, i.e. 
\begin{itemize}
\item $\nu$ is of the form invariant current $\nu=T\wedge T^u$, where $T^u$ is an invariant current ($\frac{1}{d}f_*T^u=T^u$) and $T$ is the Green current of $f$, 
\item $\nu$ is of entropy $\log(d)$ and hyperbolic of saddle type,
\item $\supp(\nu)\cap \supp(\mu_{eq})=\emptyset$, where $\mu_{eq}=T\wedge T$ is the equilibrium measure.
\end{itemize} 

Denote by $\Br_\nu$ the basin of $\nu$. It follows from Theorem \ref{th laminarite sans ens. att.} that $\sigma_T(\Br_\nu)>0$. A natural question is to know if $\sigma_T$ almost every point is in the basin of a hyperbolic measure of saddle type.
\bigskip

For every  Pesin box $\hP$ we constructed (see Theorem \ref{constr. TshP}) a laminar current $T^s_\hP$ such that $ T^s_\hP \intgeom T^u \leq T\wedge T^u=\nu$ and $M(T^s_\hP \intgeom T^u)\geq \nu(\hP)(1-\varepsilon)\cdot\nu^u(\hP)$
where $\nu^u$ is the marking of the restriction $T^u_\hP$  of $T^u$ to $W^u(\hP)$, i.e. the measure such that
$T^u_\hP=\int [\Delta^u_\beta] \ds \nu^u(\beta)$.

For different reasons we believe that Theorem \ref{constr. TshP} can be  improve. 
\begin{question}
Under the same assumptions, can we construct, and  all $\varepsilon>0$, a uniformly laminar current $T^s_\hP$ such that $M(T^s_\hP \intgeom T^u)\geq \nu(\hP)(1-\varepsilon)$.
Or can we construct, for every all $\varepsilon>0$,  a laminar current $T^s_\varepsilon$ such that  $T^s_\varepsilon\leq T$, and
$$M(T^s_\varepsilon\intgeom T^u)\geq 1-\varepsilon \, ?$$
\end{question}

\subsection{The control of the genus of the curves $f^{-n} L$}
 
We mentioned in the introduction that a way to prove that the Green current is laminar is to  control the genus of the curves $f^{-n} L$, where $L$ is a line such that $\frac{1}{d^n}f^{n*}[L]\rightarrow T$.
In several cases, the growth of the genus of the curves $f^{-n} L$ is linked with the number of preimages of a point; the link is given by the Riemann-Hurwitz formula.

More precisely, by \cite[Theorem 1]{deT:laminarite}, if $\genus(f^{-n}(L)\cap U)=O(d^n)$ then $T$ is laminar.
H. de Thelin \cite{deT:concentration} proved that this true for post-critically finite maps. In the general case, he obtained:

\begin{theoreme}[{\cite[Theorem 2]{deT:concentration}}]
For a generic endomorphism $f$ of $\proj^2$, there exists a neighboorhood $V$ of $\mu_{eq}$ such that 
$$\underset{n\rightarrow \infty}{\limsup} \, \dfrac{1}{n} \log \, \left( \underset{L\in (\proj^2)^*}{\max} \, \genus(f^{-n}(L)\cap V^c) \right)\leq \log \, d.$$
\end{theoreme}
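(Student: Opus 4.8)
The plan is to follow de Thélin's strategy, whose backbone is the Riemann--Hurwitz formula combined with the equidistribution of precritical curves towards the Green current $T$; the essential point is that away from $\supp(\mu_{eq})$ the topological complexity of $f^{-n}(L)$ grows at rate $d$ rather than the ``full'' rate $d^2$.

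First I would fix a line $L\in(\proj^2)^*$, set $C_n:=f^{-n}(L)$ (a plane curve of degree $d^n$), and examine the branched covering $\phi_n:=f^n|_{C_n}:C_n\to L\simeq\proj^1$, which has degree $d^{2n}$ and ramification divisor $C_n\cap\mathrm{Crit}(f^n)$ counted with multiplicities. I would choose once and for all a neighbourhood $V\supset\supp(\mu_{eq})$ adapted to $f$ (a generic sublevel set of a suitable exhaustion of $\proj^2\setminus\supp(\mu_{eq})$) so that, uniformly in $L$, the real curve $C_n\cap\partial V$ has $O(d^n)$ connected components; then $C_n\cap V^c$ has $O(d^n)$ components and boundary circles, and for generic $f$ the singular points of $C_n$ and the non-transverse intersections with $\mathrm{Crit}(f^n)$ also contribute only $O(d^n)$ (Bézout against the locus of non-generic lines). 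Applying Riemann--Hurwitz to $\phi_n$ componentwise, with these boundary corrections, bounds $\genus(C_n\cap V^c)$ by the number, with multiplicity, of ramification points of $\phi_n$ lying in $V^c$, up to an $O(d^n)$ error.

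Next I would decompose $\mathrm{Crit}(f^n)=\sum_{k=0}^{n-1}(f^k)^*\mathrm{Crit}(f)$. A ramification point $p$ coming from the $k$-th term satisfies $f^k(p)\in\mathrm{Crit}(f)\cap C_{n-k}$, and since $(f^k)^*[C_{n-k}]=[C_n]$ the $k$-th contribution is exactly the number of $p\in V^c$ with $f^k(p)\in\mathrm{Crit}(f)\cap C_{n-k}$, i.e. a weighted count of $f^{-k}$-preimages landing in $V^c$ of the $O(d^{\,n-k})$ points of $\mathrm{Crit}(f)\cap C_{n-k}$ (the cardinality bound being Bézout, uniform in $L$). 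The whole matter then reduces to showing that the sum over $0\le k<n$ of these counts is $\le d^{\,n(1+o(1))}$.

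This last estimate is the hard part, and it is where the genericity of $f$ is essential. The naive input is too weak: from $\tfrac1{d^{m}}[C_m]\to T$ and $\tfrac1{d^{k}}(f^k)^*[\mathrm{Crit}(f)]\to(\deg\mathrm{Crit}(f))\,T$ (uniformly in $L$) one only gets $\tfrac1{d^{2n}}[C_n]\wedge[\mathrm{Crit}(f^n)]\to\mathrm{const}\cdot\mu_{eq}$, a measure carried by $\supp(\mu_{eq})\subset V$, hence $o(d^{2n})$ on $V^c$ --- far from $d^{\,n(1+o(1))}$. To gain the extra factor $d^n$ I would use that the preimages contributing to the $k$-th term are preimages of a point whose forward orbit of length $k$ stays in $\overline{V^c}$, so their number is at most $\exp\!\big(k\,h_{\mathrm{top}}(f|_{\overline{V^c}})+o(k)\big)$; since $\overline{V^c}$ is disjoint from $\supp(\mu_{eq})$ and $\mu_{eq}$ is the unique measure of maximal entropy, one has $h_{\mathrm{top}}(f|_{\overline{V^c}})\le\log d$ (the ``half-entropy'' reflecting that the dynamics off $\supp(\mu_{eq})$ is essentially one-dimensional/laminated), which together with the effective, uniform-in-$L$ equidistribution of the precritical sets that genericity provides gives $\lesssim d^{\,n-k}\cdot d^{\,k(1+o(1))}=d^{\,n(1+o(1))}$ for each $k$. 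Summing the $n$ terms preserves the rate, and since every bound above is uniform over the compact family $(\proj^2)^*$, taking $\max_L$ and then $\limsup\frac1n\log(\cdot)$ yields $\le\log d$. The main obstacle is thus precisely the passage from the qualitative equidistribution (which only controls the rate-$d^2$ part of the complexity, concentrated on $\supp(\mu_{eq})$) to the sharp rate $d$ off $\supp(\mu_{eq})$ --- i.e. making the entropy bound $h_{\mathrm{top}}(f|_{\overline{V^c}})\le\log d$ and the equidistribution speed cooperate --- which is the technical core of de Thélin's argument.
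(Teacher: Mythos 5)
First, a point of comparison: this statement is not proved in the paper at all --- it is quoted verbatim from de Thélin \cite[Theorem 2]{deT:concentration}, and the only thing the paper says about its proof (in the final section) is that it runs in two steps: controlling the number of ``small'' handles of $f^{-n}(L)$ outside $V$, which is comparable to the number of preimages of points whose orbits stay in $\proj^2\setminus V$, and controlling the number of ``larger'' handles, which is \emph{not} reduced to a preimage count. Your sketch, organized around a localized Riemann--Hurwitz formula for $f^n|_{C_n}:C_n\to L$ plus an entropy bound, is therefore not the cited argument, and more importantly it has genuine gaps at exactly the places where the theorem is hard.

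Concretely: (a) Riemann--Hurwitz computes the \emph{total} genus of $C_n=f^{-n}(L)$, which is of order $d^{2n}$, and it does not localize. To convert it into a bound on $\genus(C_n\cap V^c)$ you must control the boundary corrections, i.e.\ the number of connected components of $C_n\cap\partial V$ and of $C_n\cap V^c$ and the way handles are cut by $\partial V$. Your assertion that these are $O(d^n)$ uniformly in $L$ is exactly the kind of ``topology grows at rate $d$ off $\supp(\mu_{eq})$'' statement the theorem is about; a priori Morse/Milnor--Thom type bounds for a curve of degree $d^n$ against a fixed real hypersurface only give $O(d^{2n})$, so this step assumes the conclusion rather than proving it. (b) The count of ramification points in $V^c$ rests on the claim that the number of $f^k$-preimages of a point whose orbit stays in $\overline{V^c}$ is at most $e^{k(h_{\mathrm{top}}(f|_{\overline{V^c}})+o(1))}$ with $h_{\mathrm{top}}(f|_{\overline{V^c}})\le\log d$. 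But $\overline{V^c}$ is not forward invariant, so $h_{\mathrm{top}}(f|_{\overline{V^c}})$ is not the entropy of a dynamical system in the usual sense, and even granting the theorem of de Thélin/Dinh that invariant measures off $\supp(\mu_{eq})$ have entropy at most $\log d$, measure-theoretic (or topological) entropy does not bound pointwise preimage counts. A bound of the type ``at most $d^{k(1+o(1))}$ preimages staying outside $V$'' is essentially a small-topological-degree property --- precisely the extra hypothesis \ref{cond petit dt} that the present paper has to impose in its own results, and the paper notes that even with that hypothesis one only recovers the ``small handles'' estimate, the ``large handles'' requiring a different (area/degree, not preimage) argument, yielding only $O(n\,d^n)$. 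Finally, the genericity of $f$ in de Thélin's statement is not used the way you use it (transversality/Bézout against non-generic lines plus an unspecified ``effective uniform equidistribution''); as stated, the technical core of the proof is acknowledged but not supplied, so the proposal does not amount to a proof.
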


By \cite[Theorem 1.1]{duj:contre}, we know that we cannot improve this result without an additional assumption.
An idea is to adapt de Thélin proof in the basin of a small topological degree attracting set (see \cite{moi,moi:taf} for the definition).
The proof of {\cite[Theorem 2]{deT:concentration}} is essentially in two steps:
\begin{enumerate}
\item Controlling the number of ``small'' handles of $f^{-n}(L)$, which is about the same as the number of preimages staying in $U=\proj^2\setminus V$. 
\item Controlling the number of ``larger'' handles. 
\end{enumerate}

Under the small topological degree assumption, we may adapt de Thelin's proof to get that the number of ``small'' handles  is bounded by $O(d^n)$.
But the control of the number of ``larger'' handles does not seems to be linked with the number of preimages, so we only have
that the genus of $f^{-n}(L)$ in a trapping region growth as $O( n \, d^n)$.

Sandrine Daurat

University of Michigan

Ann Arbor, MI, 48109, USA

E-mail:daurat@umich.edu
\end{document}